\title{Homological perturbation theory for algebras over operads}
\author{Alexander Berglund}
\address{Department of Mathematics,
         Stockholm University,
         SE-106 91,
         Sweden}
\email{alexb@math.su.se}
\thanks{Supported by the Danish National Research Foundation through the Centre for Symmetry and Deformation (DNRF92).}
\newtheorem{theorem}{Theorem}[section]
\newtheorem{proposition}[theorem]{Proposition}
\newtheorem{corollary}[theorem]{Corollary}
\theoremstyle{definition}
\newtheorem{definition}[theorem]{Definition}
\newtheorem{remark}[theorem]{Remark}
\newcommand{\antishriek}{\text{!`}}
\newcommand{\kk}{\Bbbk}
\newcommand{\SDR}[5]{\xymatrix{*[r]{#1} \ar@<1ex>[r]^-{#3} \ar@(ul,dl)[]_{#5} & #2 \ar@<1ex>[l]^-{#4}}}
\newcommand{\bigSDR}[5]{\xymatrix{*[r]{#1} \ar@<1ex>[rr]^-{#3} \ar@(ul,dl)[]_{#5} && #2 \ar@<1ex>[ll]^-{#4}}}
\newcommand{\Disc}[2]{D^{#2}(#1)}
\newcommand{\dash}{-}
\newcommand{\Cat}{\mathscr{C}}
\newcommand{\Acat}{\mathscr{A}}
\newcommand{\OpP}{\mathcal{P}}
\newcommand{\OpS}{\mathcal{S}}
\newcommand{\Coop}{\mathcal{C}}
\newcommand{\Diag}{\mathscr{D}}
\newcommand{\Ind}{\operatorname{Ind}}
\newcommand{\Talg}{T}
\newcommand{\Ob}{\operatorname{Ob}}
\newcommand{\Symgp}[1]{{\Sigma_{#1}}}
\newcommand{\Symgpd}{\Sigma}
\newcommand{\Op}{\mathcal{O}}
\newcommand{\defn}[1]{\emph{#1}}
\newcommand{\set}[2]{\left\{ #1 \mid #2 \right\}}
\newcommand{\numbers}{\mathbb{N}}
\newcommand{\rationals}{\mathbb{Q}}
\newcommand{\tensor}{\otimes}
\newcommand{\Hom}{\operatorname{Hom}}
\renewcommand{\H}{\operatorname{H}}
\newcommand{\Th}[1]{T_\numbers(#1)}
\newcommand{\Ths}[1]{T_\Symgpd(#1)}
\newcommand{\thzr}{\mathbf{0}}
\newcommand{\tho}{\mathbf{1}}
\newcommand{\thf}{\mathbf{f}}
\newcommand{\thg}{\mathbf{g}}
\newcommand{\thh}{\mathbf{h}}
\newcommand{\tht}{\mathbf{t}}
\newcommand{\thd}{\mathbf{d}}
\newcommand{\thr}{\mathbf{r}}
\newcommand{\thl}{\mathbf{l}}
\newcommand{\thq}{\mathbf{q}}
\newcommand{\thF}{\mathbf{F}}
\newcommand{\thG}{\mathbf{G}}
\newcommand{\thH}{\mathbf{H}}
\newcommand{\thFp}{\mathbf{F'}}
\newcommand{\thGp}{\mathbf{G'}}
\newcommand{\thHp}{\mathbf{H'}}
\newcommand{\thS}{\mathbf{\Sigma}}
\newcommand{\thDiag}{\underline{\Diag}}
\newcommand{\thx}{\mathbf{x}}
\newcommand{\thy}{\mathbf{y}}
\newcommand{\thz}{\mathbf{z}}
\newcommand{\thfp}{\mathbf{f'}}
\newcommand{\thgp}{\mathbf{g'}}
\newcommand{\thhp}{\mathbf{h'}}
\newcommand{\thtp}{\mathbf{t'}}
\newcommand{\sh}{\thh^\Sigma}
\newcommand{\shn}[1]{\thh_{#1}^{\Sigma}}
\newcommand{\hh}{\thh^{der}}
\newcommand{\Qb}{\mathbf{q}}
\newcommand{\hhn}[1]{\thh_n^{der}}
\newcommand{\Qn}[1]{\thq_n}
\newcommand{\coeff}[2]{Q_{#2}^{#1}}
\newcommand{\As}{\mathtt{A}}
\newcommand{\Asa}{\As^\antishriek}
\newcommand{\BB}{\mathbb{B}}
\newcommand{\HH}[1]{\thh_{#1}}
\newcommand{\OpI}{\mathcal{I}}
\newcommand{\End}[1]{\mathcal{E}nd_{#1}}
\newcommand{\ttree}{\xygraph{
!{<0pt,0pt>;<-11pt,0pt>:<0pt,11pt>::}
!{(-2,2)}*{{}_g}="a"
!{(2,2)}*{{}_g}="b"
!{(0,0)}*+{{}_{m_2}}="c"
!{(0,-2)}*+{{}_h}="d"
"a"-"c"
"b"-"c"
"c"-"d"
}}
\newcommand{\rtree}{\xygraph{
!{<0pt,0pt>;<11pt,0pt>:<0pt,11pt>::}
!{(-4,3)}*{{}_g}="a"
!{(0,3)}*{{}_g}="b"
!{(4,3)}*{{}_g}="c"
!{(-2,1)}*+{{}_{m_2}}="d"
!{(0,-1)}*+{{}_{m_2}}="e"
!{(0,-3)}*+{{}_h}="f"
"a"-"d"
"b"-"d"
"c"-"e"
"d"-"e"|{h}
"e"-"f"
}}
\newcommand{\ltree}{\xygraph{
!{<0pt,0pt>;<-11pt,0pt>:<0pt,11pt>::}
!{(-4,3)}*{{}_g}="a"
!{(0,3)}*{{}_g}="b"
!{(4,3)}*{{}_g}="c"
!{(-2,1)}*+{{}_{m_2}}="d"
!{(0,-1)}*+{{}_{m_2}}="e"
!{(0,-3)}*+{{}_h}="f"
"a"-"d"
"b"-"d"
"c"-"e"
"d"-"e"|{h}
"e"-"f"
}}
\newcommand{\mtree}{\xygraph{
!{<0pt,0pt>;<11pt,0pt>:<0pt,11pt>::}
!{(-4,3)}*{{}_g}="a"
!{(0,3)}*{{}_g}="b"
!{(4,3)}*{{}_g}="c"
!{(0,-0.5)}*+{{}_{m_3}}="d"
!{(0,-3)}*+{{}_h}="e"
"a"-"d"
"b"-"d"
"c"-"d"
"d"-"e"
}}
\begin{document}

\begin{abstract}
We extend homological perturbation theory to encompass algebraic structures governed by operads and cooperads. The main difficulty is to find a suitable notion of algebra homotopy that generalizes to algebras over operads $\Op$. To solve this problem, we introduce \emph{thick maps of $\Op$-algebras} and special thick maps that we call \emph{pseudo-derivations} that serve as appropriate generalizations of algebra homotopies for the purposes of homological perturbation theory.

As an application, we derive explicit formulas for transferring $\Omega(\Coop)$-algebra structures along contractions, where $\Coop$ is any connected cooperad in chain complexes. This specializes to transfer formulas for $\Op_\infty$-algebras for any Koszul operad $\Op$, in particular for $A_\infty$, $C_\infty$, $L_\infty$ and $G_\infty$-algebras. A key feature is that our formulas are expressed in terms of the compact description of $\Omega(\Coop)$-algebras as coderivation differentials on cofree $\Coop$-coalgebras. Moreover, we get formulas not only for the transferred structure and a structure on the inclusion, but also for structures on the projection and the homotopy.
\end{abstract}

\maketitle

\section{Introduction}
Perturbation methods have proved to be very useful in algebraic topology, homological algebra, and deformation theory \cite{E.Brown,Brown,Gugenheim,Halperin-Stasheff,Hess,HK,HS,JLS,LamS,Schlessinger-Stasheff}. It has been pointed out that homological perturbation theory suffers from the defect of not handling well algebra structures where symmetries play a role, such as Lie or commutative algebras, or more generally algebras over an operad $\Op$ in chain complexes. An obstruction has been the lack of a good notion of `algebra homotopy' for these types of algebras, see for instance \cite[Remark, end of \S 2.2]{GLS2}, \cite{HS}, or \cite[Remark 12.2]{Huebschmann-A}.

The goal of this paper is to solve this problem. We do this by introducing the notion of \emph{thick maps} of $\Op$-algebras. Thick maps are a simultaneous generalization of morphisms and derivations. We single out special thick maps that we call \emph{pseudo-derivations}, and we show that these are appropriate generalizations of algebra homotopies for the purposes of homological perturbation theory. Our main technical results are the `$\Op$-algebra perturbation lemma' (Theorem \ref{thm:PL}) and the `$\Op$-algebra tensor trick' (Theorem \ref{thm:tt}).

A classical application of homological perturbation theory is the streamlined proof of the transfer theorem for $A_\infty$-algebras, see \cite[\S 4.2]{GLS2}. Due to the defect mentioned above, it has not been possible to treat more general types of strong homotopy algebras in the same way. But as an application of the results presented here, we obtain simple and explicit formulas for transferring $\Op_\infty$-algebra structures along contractions, or more generally $\Omega(\Coop)$-algebra structures for any connected cooperad $\Coop$, see Theorem \ref{thm:implicit} (if $\Coop = \Op^\antishriek$ is the Koszul dual cooperad of a Koszul operad $\Op$, then $\Omega(\Coop) =\Op_\infty$). A key feature is that our formulas are expressed in terms of the compact description of $\Omega(\Coop)$-algebras as coderivation differentials on cofree $\Coop$-coalgebras. Another feature is that we obtain explicit formulas not only for the transferred $\Omega(\Coop)$-algebra structure, but for $\Omega(\Coop)$-structures on all maps in the contraction. A curious discovery is that the structures on the projection and the homotopy depend on the choice of pseudo-derivation extending the original homotopy, whereas the transferred structure and the structure on the inclusion do not, see Theorem \ref{thm:transfer}. This observation seems to be new even in the case of $A_\infty$-algebras.

We should point out that \emph{existence} of a transferred structure is well known and follows from general principles (see, e.g., \cite{BM,BV,JY,Markl}), essentially because operads of the form $\Omega(\Coop)$ are cofibrant. But such abstract considerations do not yield tractable explicit formulas. Explicit formulas for transferring $\Op_\infty$-algebra structures, for $\Op$ a Koszul operad, have been obtained independently by Loday-Vallette \cite[Theorem 10.3.3]{LV}. The advantage of our approach is that we obtain simple and transparent formulas in terms of the compact description of $\Op_\infty$-algebras as coderivation differentials on cofree $\Op^\antishriek$-coalgebras. The compact description is for many purposes the most convenient one to work with, and it is desirable to have a transfer theorem in this form. Furthermore, we can recover the Loday-Vallette formulas from our formulas, see Theorem \ref{thm:transfer}.

A perturbation lemma for cocommutative coalgebras, yielding transfer of $L_\infty$-algebra structures, has been obtained by Huebschmann \cite{Huebschmann-Lie,Huebschmann-sh} using different methods.

\subsection*{Statements of results}
Let us first introduce the two new notions: \emph{thick maps} and \emph{pseudo-derivations}.
\begin{definition}
Let $A$ and $B$ be chain complexes over a commutative ring $\kk$. A \defn{thick map $\thf\colon A\rightarrow B$} is a sequence of maps of the same degree $|\thf|$
$$\thf_n\colon A^{\tensor n}\rightarrow B^{\tensor n},\quad n\geq 0.$$
We say that $\thf$ is a \emph{symmetric thick map} if each $\thf_n$ is equivariant with respect to the action of the symmetric group $\Sigma_n$ permuting tensor factors. If $A$ and $B$ are algebras over an operad $\Op$ then we say that $\thf$ is a \emph{thick map of $\Op$-algebras} if $\thf_1\mu_A = (-1)^{|\thf||\mu|}\mu_B\thf_n$ for all $\mu\in \Op(n)$.
\end{definition}
Thick maps of $\Op$-algebras are a simultaneous generalization of morphisms and derivations. Indeed, morphisms of $\Op$-algebras may be identified with thick maps of $\Op$-algebras $\thf\colon A\rightarrow B$ that satisfy $\thf_{p+q} = \thf_p\tensor \thf_q$, and derivations may be identified with thick maps of $\Op$-algebras $\thd\colon A\rightarrow A$ that satisfy $\thd_{p+q} = \thd_p\tensor \tho + \tho\tensor \thd_q$, see Proposition \ref{prop:TO}. Levelwise composition, addition and differentiation of thick maps make $\Op$-algebras together with thick maps of $\Op$-algebras into a dg-category, i.e., a category enriched in chain complexes. Just as in any dg-category, a \defn{contraction} is a diagram
$$\Diag\colon \SDR{A}{B}{\thf}{\thg}{\thh},$$
where $|\thf|=|\thg|=0$, $|\thh| = 1$ and
$$\partial(\thf)=\thzr, \quad \partial(\thg)=\thzr, \quad \partial(\thh) = \thg\thf - \tho, \quad \thf\thg=\tho$$
$$\thf\thh = \thzr,\quad \thh\thh = \thzr,\quad \thh\thg = \thzr.$$

\begin{definition}[Contraction of $\Op$-algebras]
If $A$ and $B$ are $\Op$-algebras, then we say that a contraction $\Diag$ is a \defn{contraction of $\Op$-algebras} if $\thf$, $\thg$ are morphisms and if $\thh$ is a \defn{pseudo-derivation}, by which we mean that
$$(\thh_p\tensor \tho - \tho\tensor \thh_q) \thh_{p+q} = \thh_p\tensor \thh_q$$
and
$$\thh_{p+q}(\thh_p\tensor \tho - \tho\tensor \thh_q) = - \thh_p\tensor \thh_q$$
for all $p,q\geq 0$.
\end{definition}

When $\Op$ is the operad governing associative algebras, pseudo-derivations generalize algebra homotopies in the sense of \cite{GLS2,HK} (see Proposition \ref{prop:pseudo-derivations generalize derivations}), and the following theorem is a generalization of the `Algebra Perturbation Lemma' \cite[$(2.1^*)$]{HK}. A \emph{perturbation} of an $\Op$-algebra $A$ is a derivation $\tht\colon A\rightarrow A$ satisfying $\partial(\tht) + \tht^2 = \thzr$.

\begin{theorem}[$\Op$-algebra Perturbation Lemma] \label{thm:PL} Let $\Diag$ be a contraction of $\Op$-algebras. If $\tht$ is a perturbation of $A$ then, provided the series $\tht+\tht\thh\tht +\ldots$ converges, the recursive formulas
\begin{align*}
\thfp & = \thf + \thfp\tht\thh, & \thgp & = \thg + \thh\tht\thgp, \\
\thhp & = \thh + \thhp\tht\thh, & \thtp & = \thf\tht\thgp,
\end{align*}
define a perturbation $\thtp$ of $B$ and a contraction of $\Op$-algebras
$$\Diag^\tht \colon \bigSDR{(A,d_A+\tht_1)}{(B,d_B+\tht_1')}{\thfp}{\thgp}{\thhp}$$
In particular, $\thfp$, $\thgp$ are morphisms, $\thtp$ is a derivation and $\thhp$ is a pseudo-derivation.
\end{theorem}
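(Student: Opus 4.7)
The plan is to reduce the theorem to the classical (non-operadic) perturbation lemma applied levelwise, together with a direct verification of the algebraic properties. Since composition, addition, and differentiation of thick maps are defined levelwise, the recursive formulas in the theorem restricted to level $n$ coincide with the classical perturbation lemma formulas for the SDR $(\thf_n, \thg_n, \thh_n) \colon A^{\otimes n} \rightleftarrows B^{\otimes n}$ with perturbation $\tht_n$. The classical PL at each level therefore yields the SDR relations for $(\thfp_n, \thgp_n, \thhp_n)$ together with the identity $\partial(\thtp_n) + (\thtp_n)^2 = \thzr$. Assembling over all $n$ produces a contraction of chain complexes of thick maps and shows that $\thtp$ is a perturbation of $B$.

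It remains to check that $\thfp, \thgp$ are morphisms, $\thtp$ is a derivation, and $\thhp$ is a pseudo-derivation. For the morphism property $\thfp_{p+q} = \thfp_p \otimes \thfp_q$, I would work with the series form $\thfp = \thf \sum_{k\geq 0}(\tht\thh)^k$ and match the expansions of the two sides order-by-order in the perturbation series. The side conditions $\thf\thh = \thzr$ and $\thh^2 = \thzr$ (from which $\thfp\thh = \thzr$ is easily deduced), together with the pseudo-derivation identities for $\thh$, allow the cross-terms between the $p$- and $q$-tensor factors to be rearranged so as to reconstruct $\thfp_p \otimes \thfp_q$. The morphism property of $\thgp$ is symmetric, using $\thh\thg = \thzr$. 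Once $\thfp$ and $\thgp$ are morphisms, the derivation property of $\thtp = \thf\tht\thgp$ is immediate by expanding $\thtp_{p+q}$ using the morphism properties of $\thf, \thgp$ and the derivation property of $\tht$.

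The hardest step will be verifying that $\thhp$ is a pseudo-derivation. This amounts to two quadratic identities relating $\thhp_p, \thhp_q, \thhp_{p+q}$. Expanding $\thhp = \thh\sum_{k\geq 0}(\tht\thh)^k = \sum_{k\geq 0}(\thh\tht)^k \thh$ produces double series whose cross-terms must be organized to recover the required quadratic identities. This requires a careful telescoping argument, using the pseudo-derivation identities for $\thh$, the derivation identity for $\tht$, and the SDR side conditions $\thh^2 = \thzr$, $\thf\thh = \thzr$, $\thh\thg = \thzr$. Pseudo-derivations, rather than strict derivations, enter precisely at this point: the correction terms $\thh_p \otimes \thh_q$ appearing in the pseudo-derivation identities for $\thh$ compensate for the non-strict tensor decomposition of $\thh_{p+q}$, allowing the various cross-terms at each order to pair up correctly.
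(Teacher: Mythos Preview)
Your overall skeleton---apply the Basic Perturbation Lemma levelwise and then verify the algebraic properties separately---is exactly the paper's structure. But there are two genuine gaps and one place where your plan is more laborious than it needs to be.

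First, you have entirely omitted the $\Op$-algebra part of the conclusion. The theorem asserts that $\Diag^\tht$ is a contraction \emph{of $\Op$-algebras}, which means in particular that $\thfp,\thgp,\thhp,\thtp$ must be thick maps of $\Op$-algebras, i.e.\ commute with the $\Op$-action in the sense $\thfp_1\mu_A = (-1)^{|\thfp||\mu|}\mu_B\thfp_n$. Your proposal never addresses this. The paper handles it by observing that thick maps of $\Op$-algebras form a dg-subcategory $T_\Op(\Cat)$ closed under sums, compositions, and inverses; since $\thf,\thg,\thh,\tht$ lie in this subcategory, so do $(\tho-\thh\tht)^{-1}$ and the perturbed maps. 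This is easy once noticed, but it is part of the statement and must be said.

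Second, your plan for the morphism and pseudo-derivation properties is missing a key ingredient. To rearrange the cross-terms in your order-by-order expansion you will need to know how $\thf_p\tensor\tho$ and $\tho\tensor\thf_q$ interact with $\thh_{p+q}$, not merely that $\thf_n\thh_n=0$. The paper isolates these as the \emph{module conditions}, e.g.\ $(\thf\tensor\tho)m^*(\thh)=\thf\tensor\thh$, and proves them from the pseudo-derivation hypothesis via an auxiliary set of \emph{annihilation conditions}. Without these identities your series matching cannot be completed. More importantly, the paper avoids series manipulations altogether: for the morphism property it computes $(\thfp\tensor\thfp)m^*(\tht\thh)$ directly from the recursive relation $\thfp=\thf+\thfp\tht\thh$ and the module conditions, obtaining $(\thfp\tensor\thfp)m^*(\tho-\tht\thh)=m^*(\thfp)m^*(\tho-\tht\thh)$, and then cancels the invertible factor $m^*(\tho-\tht\thh)$. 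The pseudo-derivation property of $\thhp$ is done by the same trick---sandwich between $(\tho-\thh\tht)\tensor(\tho-\thh\tht)$ and $m^*(\tho-\tht\thh)$---reducing it in three lines to the pseudo-derivation property of $\thh$. This is far cleaner than the telescoping double-series argument you envisage. Finally, a small point you glossed over: assembling the levelwise contractions into a thick contraction requires $d_{(A^t)^{\tensor n}}=d_{A^{\tensor n}}+\tht_n$, which holds precisely because $\tht$ is a derivation; similarly for $\thtp$, whose derivation property must therefore be established \emph{before} concluding that $\thDiag^\tht$ is a contraction in the dg-category of thick maps.
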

There is also a dual of Theorem \ref{thm:PL} for coalgebras over cooperads, see Theorem \ref{thm:TCPL}. In practice, convergence is often ensured by having suitable filtrations on the objects.

Another interesting feature of thick maps is that they provide means of linearizing non-additive functors. More precisely, we show that the free $\Op$-algebra functor $\Op[A] = \bigoplus_{n\geq 0} \Op(n)\tensor_{\Sigma_n} A^{\tensor n}$ extends to a dg-functor $\Op_\bullet[-]$ from the dg-category of chain complexes and symmetric thick maps to the dg-category of $\Op$-algebras and symmetric thick maps of $\Op$-algebras, see Proposition \ref{prop:Schur}. A consequence of this is the following theorem, which is a generalization of the `Tensor Trick', see \cite{HK,GLS2}.
\begin{theorem}[$\Op$-algebra Tensor Trick] \label{thm:tt}
Consider a contraction of chain complexes
$$\Diag\colon\SDR{A}{B}{f}{g}{h}.$$
If $\thh$ is a symmetric pseudo-derivation such that $\thh_1 = h$ and $\partial(\thh)= \thg\thf-\tho$, $\thh\thh = \thzr$, $\thf\thh = \thzr$, $\thh\thg=\thzr$, where $\thf_n =f^{\tensor n}$ and $\thg_n = g^{\tensor n}$, then there is an induced contraction of $\Op$-algebras
$$\Op_\bullet[\Diag]\colon \bigSDR{\Op[A]}{\Op[B]}{\Op_\bullet[\thf]}{\Op_\bullet[\thg]}{\Op_\bullet[\thh]}.$$
If $\Op$ is a non-symmetric operad, then one may drop the condition that $\thh$ be symmetric. There is always a non-symmetric pseudo-derivation $\thh$ with the requisite properties, namely
$$\thh_n = \sum_{p+1+q = n} 1^{\tensor p}\tensor h\tensor (gf)^{\tensor q}.$$
If $\kk$ contains the rational numbers $\rationals$ as a subring then, with $\thh_n$ as above,
$$\thh_n^\Sigma = \frac{1}{n!}\sum_{\sigma\in \Symgp{n}} \sigma^{-1} \thh_n \sigma,$$
defines a symmetric pseudo-derivation $\thh^\Sigma$ with the requisite properties.
\end{theorem}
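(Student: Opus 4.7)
The proof will split into three parts, one per assertion. The plan for the first is to apply the dg-functor $\Op_\bullet[-]$ of Proposition \ref{prop:Schur} to the contraction $\Diag$. Functoriality will yield all the dg-category contraction relations for the image diagram, so it will only remain to check that $\Op_\bullet[\thf]$ and $\Op_\bullet[\thg]$ are morphisms and $\Op_\bullet[\thh]$ is a pseudo-derivation of $\Op$-algebras. The morphism property will follow from Proposition \ref{prop:TO}, since the condition $\thf_n = f^{\tensor n}$ is exactly what characterizes morphisms among thick maps and is preserved levelwise by $\Op_\bullet[-]$. The pseudo-derivation property will follow from the corresponding identities for $\thh$: these are polynomial identities in composition and in tensor product with identity maps of tensor powers, and both operations are respected by the dg-functor.

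The plan for the second assertion has two points. First, when $\Op$ is non-symmetric, the Schur construction $\Op[A] = \bigoplus \Op(n)\tensor A^{\tensor n}$ does not quotient by $\Symgp{n}$, so the argument just given extends to not-necessarily-symmetric thick maps. Second, the explicit formula $\thh_n = \sum_{p+1+q=n} 1^{\tensor p}\tensor h\tensor (gf)^{\tensor q}$ must be shown to satisfy all the requisite conditions. The identities $\thh_1 = h$, $\thf\thh = \thzr$, and $\thh\thg = \thzr$ will be immediate from $fh = 0$ and $hg = 0$ applied componentwise. The identity $\partial(\thh) = \thg\thf - \tho$ will follow by Leibniz: since $\partial(gf) = 0 = \partial(1)$, one obtains $\partial(\thh_n) = \sum_p 1^{\tensor p}\tensor (gf-1)\tensor (gf)^{\tensor q}$, which telescopes to $(gf)^{\tensor n} - 1^{\tensor n}$. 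For $\thh\thh = \thzr$ and the two pseudo-derivation equations, each side will be expanded as a double sum indexed by the positions of the $h$-factors in the composed maps; in every summand, either the two $h$'s collide (giving $h^2 = 0$) or an $fh$ or $hg$ arises at some tensor slot (giving $0$), while the surviving terms will reassemble into the required right-hand side, with Koszul signs accounting for the minus sign in the second pseudo-derivation equation.

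The plan for the third assertion is to exploit the naturality of the averaging operator $\frac{1}{n!}\sum_{\sigma\in\Symgp{n}}\sigma^{-1}(\cdot)\sigma$, which requires $\rationals \subseteq \kk$. The linear conditions for $\thh^\Sigma$ will follow from their non-symmetric counterparts by averaging, using the $\Symgp{n}$-equivariance of $\thf_n = f^{\tensor n}$ and $\thg_n = g^{\tensor n}$. For the quadratic conditions ($\thh^\Sigma\thh^\Sigma = \thzr$ and the pseudo-derivation equations), the same case analysis as in the non-symmetric setting will be applied to each conjugate summand $\sigma^{-1}\thh_n\sigma$: each such conjugate has the same block shape as $\thh_n$, namely a single $h$ at some tensor slot with $1$'s and $gf$'s distributed among the others, so the vanishing arguments via $h^2 = fh = hg = 0$ carry over intact and the surviving terms, after rearrangement, reassemble into the symmetrized right-hand side.

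The hardest step will be the combinatorial bookkeeping for the pseudo-derivation equations in the second part, in particular the careful tracking of Koszul signs through the double sums over positions of the $h$-factors and the verification that all non-matching terms cancel; once that is in hand, the symmetrization in the third part is essentially formal.
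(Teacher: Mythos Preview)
Your plan for the first assertion matches the paper's: apply the dg-functor $\Op_\bullet[-]$ of Proposition~\ref{prop:Schur}, which already records that it preserves morphisms and pseudo-derivations and lands in thick maps of $\Op$-algebras, so no further checks are needed. For the second assertion you propose a direct expansion; the paper instead observes that the explicit $\thh$ is the $(\tho,\thg\thf)$-derivation of Proposition~\ref{prop:algebra contraction} and then invokes Proposition~\ref{prop:pseudo-derivations generalize derivations}. Your route works and is only slightly longer, since the $(\tho,\thg\thf)$-derivation identity $\thh_{p+q}=\thh_p\tensor(\thg\thf)_q+\tho\tensor\thh_q$ is exactly what makes your double-sum collapse.

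The genuine gap is in the third assertion. Your claim that ``the vanishing arguments via $h^2=fh=hg=0$ carry over intact'' to each conjugate is not correct for the quadratic conditions: in $\shn{n}\shn{n}$ and in the pseudo-derivation expressions you must multiply conjugates $\thh_n^\sigma$ and $\thh_n^\tau$ with $\sigma\neq\tau$, and those cross-terms do \emph{not} all vanish slotwise. For $\sh\sh=\thzr$ one can rescue the argument by a Koszul-sign pairing (the nonzero summand with $h$'s at positions $j,k$ coming from $(\sigma,\tau)$ cancels against the one from $(\tau,\sigma)$), but for the pseudo-derivation identities the surviving terms assemble into $\sh_p\tensor\sh_q$ only after nontrivial identities among the coefficients $\coeff{n}{k}=k!(n-1-k)!/n!$; the paper's Remark at the end of \S\ref{section:symmetric tensor trick} explicitly flags this. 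Your proposal does not supply these identities, and your diagnosis that ``once Part~2 is in hand, the symmetrization is essentially formal'' underestimates where the difficulty lies.

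The paper avoids the combinatorics entirely by first proving the decomposition $\sh=\Qb\,\hh=\hh\,\Qb$ of Proposition~\ref{prop:sh-decomposition}, where $\hh_n=\sum_j 1^{\tensor j}\tensor h\tensor 1^{\tensor(n-1-j)}$ is an honest derivation, and then verifying the annihilation conditions of Proposition~\ref{prop:annihilation conditions}. These follow in one line each from $\hh\hh=\thf\hh=\hh\thg=\thzr$, and by that proposition they are equivalent to the pseudo-derivation and module conditions. You should either adopt this decomposition or, if you insist on the direct route, state and prove the required coefficient identities.
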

We also give the dual of Theorem \ref{thm:tt} for cooperads, see Theorem \ref{thm:cooperad tensor trick}. If we demand the existence of a symmetric pseudo-derivation with the requisite properties for \emph{any} given contraction, then the condition $\rationals\subseteq\kk$ is necessary, see Proposition \ref{prop:sym}.

\subsection*{Application: Transfer theorem}
Let $\Coop$ be a cooperad, which we assume to be \emph{connected} in the sense that $\Coop(0)=0$ and $\Coop(1) = \kk$. For a chain complex $A$, the `cofree $\Coop$-coalgebra' is defined as
$$\Coop[A] = \bigoplus_{n\geq 0} \Coop(n)\tensor_{\Sigma_n} A^{\tensor n}.$$
Elements of the $n$-th summand are said to be of \emph{weight $n$}. Let $\Omega(\Coop)$ denote the cobar construction on $\Coop$. An $\Omega(\Coop)$-algebra structure on a chain complex $A$ may be identified with a weight decreasing coderivation perturbation $t$ of $\Coop[A]$, see Section \ref{section:transfer}. The \emph{bar construction} of an $\Omega(\Coop)$-algebra $(A,t)$ is the $\Coop$-coalgebra
$$\BB(A,t) = (\Coop[A],d_{\Coop[A]}+t).$$
If $(A,t)$ and $(B,t')$ are $\Omega(\Coop)$-algebras, and if $f\colon A\rightarrow B$ is a morphism between the underlying chain complexes, then an \emph{$\Omega(\Coop)$-structure} on $f$ is a morphism of $\Coop$-coalgebras $F\colon \BB(A,t)\rightarrow \BB(B,t')$ whose linear part is identified with $f$. We will also call such an $F$ a \emph{lax morphism of $\Omega(\Coop)$-algebras}.

An important special case is when $\Coop$ is the Koszul dual cooperad of a Koszul operad $\Op$. Then $\Omega(\Coop)$-algebras are exactly $\Op_\infty$-algebras, or `strongly homotopy' $\Op$-algebras, and a chain map with an $\Omega(\Coop)$-structure is the same as an $\Op_\infty$-map. In the case when $\Op$ is the associative operad, then the above amounts to the familiar compact definition of an $A_\infty$-algebra as a graded $\kk$-module $A$ together with a coderivation differential on the tensor coalgebra $T^c(sA)$. Other familiar examples of this form are $C_\infty$, $L_\infty$ or $G_\infty$-algebras. If the operad $\Op$ is not necessarily Koszul, one may define strongly homotopy $\Op$-algebras as algebras over the operad $\Omega  B \Op$, where the cooperad $B \Op$ is the bar construction on $\Op$.

\begin{theorem}[Transfer theorem; compact form] \label{thm:implicit}
Let $\Coop$ be a connected cooperad. Given a contraction of chain complexes
$$\SDR{A}{B}{f}{g}{h}$$
and an $\Omega(\Coop)$-algebra structure $t$ on $A$, there are explicit formulas for an $\Omega(\Coop)$-algebra structure $t'$ on $B$ and $\Omega(\Coop)$-structures $F',G',H'$ on $f,g,h$ that make
\begin{equation}
\bigSDR{\BB(A,t)}{\BB(B,t')}{F'}{G'}{H'}
\end{equation}
into a contraction of $\Coop$-coalgebras. The formulas are given by
\begin{align*}
F' & = F + FtH + F(tH)^2 + \cdots, & G' & = G + HtG + (Ht)^2G + \cdots, \\
H' & = H + HtH + H(tH)^2 + \cdots, & t' & = FtG +FtHtG + Ft(Ht)^2G + \cdots,
\end{align*}
where the maps
$$\SDR{\Coop[A]}{\Coop[B]}{F}{G}{H}$$
are defined by letting $F$, $G$ be the morphisms of $\Coop$-coalgebras induced by $f$, $g$. There are different possible choices for the homotopy $H$: For every choice of symmetric pseudo-derivation $\thh\colon A\rightarrow A$ that extends $h$ and satisfies
$$
\partial(\thh) = \thg\thf - \tho,\quad \thf\thh = \thzr,\quad \thh\thg =\thzr,\quad \thh\thh=\thzr,
$$
where $\thf_n = f^{\tensor n}$ and $\thg_n = g^{\tensor n}$, we may take $H = \Coop[\thh]$. If $\Coop$ is a non-symmetric cooperad, then one may drop the condition that $\thh$ be symmetric, and a possible choice of pseudo-derivation is
$$\thh_n = \sum_{p+1+q=n} 1^{\tensor p}\tensor h \tensor (gf)^{\tensor q}.$$
If $\rationals\subseteq \kk$ then a possible choice of symmetric pseudo-derivation is the symmetrization of $\thh_n$ above:
$$\thh_n^\Sigma = \frac{1}{n!}\sum_{\sigma\in \Symgp{n}} \sigma^{-1} \thh_n \sigma.$$
\end{theorem}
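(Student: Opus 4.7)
The strategy is to lift the problem, via the cofree $\Coop$-coalgebra functor, to the setting of $\Coop$-coalgebras, apply the dual tensor trick and the dual perturbation lemma there, and then translate the conclusion back. The crucial interpretative step is the standard dictionary recalled in Section \ref{section:transfer}: an $\Omega(\Coop)$-algebra structure on $A$ is the same data as a weight-decreasing coderivation perturbation $t$ of $\Coop[A]$; a lax morphism of $\Omega(\Coop)$-algebras is the same data as a $\Coop$-coalgebra morphism between bar constructions; and, dually, a lax homotopy corresponds to a pseudo-coderivation. Under this dictionary, the desired conclusion of the theorem is precisely a perturbed contraction of $\Coop$-coalgebras between the cofree objects $\Coop[A]$ and $\Coop[B]$.

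First, I apply the cooperad tensor trick (Theorem \ref{thm:cooperad tensor trick}) to the given contraction of chain complexes together with the chosen symmetric pseudo-derivation $\thh$ extending $h$. This yields a contraction of $\Coop$-coalgebras between $\Coop[A]$ and $\Coop[B]$ whose maps are $F=\Coop[\thf]$, $G=\Coop[\thg]$, $H=\Coop[\thh]$; the existence of a suitable $\thh$ in each of the three permitted cases---an arbitrary pseudo-derivation satisfying the stated side conditions, the explicit non-symmetric sum $\sum 1^{\tensor p}\tensor h\tensor(gf)^{\tensor q}$, or its symmetrization in the rational case---is already part of Theorem \ref{thm:tt}. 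Second, I regard $t$ as a coderivation perturbation of $\Coop[A]$ and apply the coalgebraic perturbation lemma (Theorem \ref{thm:TCPL}) to this contraction. The recursive formulas it produces read $F'=F+F'tH$, $G'=G+HtG'$, $H'=H+H'tH$, $t'=FtG'$, and iterating them yields exactly the geometric-series expressions in the statement. The conclusion of Theorem \ref{thm:TCPL} asserts that $F'$, $G'$ are $\Coop$-coalgebra morphisms, that $t'$ is a coderivation perturbation, and that $H'$ is a pseudo-coderivation; passing back through the dictionary gives the claimed $\Omega(\Coop)$-structures on $t'$, $f$, $g$, $h$ and the claimed contraction of $\Coop$-coalgebras.

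The only substantive point to verify, and the main technical obstacle, is convergence of the infinite series---equivalently, that the hypothesis of Theorem \ref{thm:TCPL} is satisfied. This is immediate from the weight grading: because $\Coop$ is connected, the perturbation $t$ corresponding to an $\Omega(\Coop)$-structure is strictly weight-decreasing, so on any weight-$n$ summand $\Coop(n)\tensor_{\Symgp{n}} A^{\tensor n}$ only finitely many iterates of $tH$, $Ht$, etc., act nontrivially. Hence each of the series defining $F'$, $G'$, $H'$, $t'$ collapses to a finite sum on each weight summand, so all formulas are well-defined and the application of Theorem \ref{thm:TCPL} is valid.
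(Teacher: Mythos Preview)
Your proposal is correct and follows essentially the same route as the paper's proof: apply the $\Coop$-coalgebra tensor trick (Theorem \ref{thm:cooperad tensor trick}) to produce the contraction $F,G,H$ on cofree coalgebras, then feed the weight-decreasing coderivation perturbation $t$ into the $\Coop$-coalgebra perturbation lemma (Theorem \ref{thm:TCPL}), with convergence guaranteed by the weight filtration. One small point the paper makes explicit and you leave implicit: the perturbation lemma is stated for \emph{thick} perturbations $\tht$, so one must extend the coderivation $t$ to the thick derivation $\tht_n=\sum_{p+1+q=n}1^{\tensor p}\tensor t\tensor 1^{\tensor q}$ and note (via Proposition \ref{prop:TC}) that this is a thick map of $\Coop$-coalgebras satisfying $\partial(\tht)+\tht^2=\thzr$; also, the paper's term is ``pseudo-derivation'' rather than ``pseudo-coderivation.''
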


\begin{remark}
Convergence of the formulas is ensured because $t$ decreases weight. The hard part of the theorem is to show that $t'$ becomes a $\Coop$-coderivation and that $F'$ and $G'$ become morphisms of $\Coop$-coalgebras. The key point is that it is exactly the pseudo-derivation property that ensures this, and furthermore that it is always possible to find a suitable pseudo-derivation.
\end{remark}

\subsection*{Expanded form of the transfer theorem}
An $\Omega(\Coop)$-algebra structure $t$ on a chain complex $A$ may alternatively be described as a family of maps $t^\nu\colon A^{\tensor n} \rightarrow A$ of degree $|\nu|-1$, indexed by elements $\nu\in\Coop(n)$, satisfying certain relations, see Section \ref{section:transfer} for details. Similarly, if $(A,t)$ and $(B,t')$ are $\Omega(\Coop)$-algebras, then an $\Omega(\Coop)$-structure on a chain map $f\colon A\rightarrow B$ may be described as a family of maps $f^\nu\colon A^{\tensor n}\rightarrow B$ of degree $|\nu|$, indexed by elements $\nu\in \Coop(n)$, such that $f^1 = f$, subject to certain relations, see Section \ref{section:transfer}. In the case when $\Coop$ is the Koszul dual cooperad of the associative operad, the above simply amounts to the description of an $A_\infty$-algebra as a chain complex $A$ together with a family of maps $m_n\colon A^{\tensor n} \rightarrow A$, $n=2,3,\ldots$, satisfying the familiar relations.

The formulas in Theorem \ref{thm:implicit} may be expanded to recursive formulas expressed in terms of this alternative description of $\Omega(\Coop)$-algebras. To state them we need to introduce some notation. For $\nu\in \Coop(n)$ where $n\geq 2$ we will write the coproduct as
$$\Delta(\nu) = \nu\circ 1^{\tensor n} + 1\circ \nu + \sum_{q=1}^p \nu^q\circ (\nu_1^q\tensor \ldots \tensor \nu_{r_q}^q)\sigma_q\in (\Coop\circ\Coop)(n)$$
where $\nu^q$ and $\nu_i^q$ are elements of $\Coop$ of arity $<n$ and where $\sigma_q\in\Sigma_n$. Furthermore, we let
$$\Delta_{(1)}(\nu) = \sum_{i=1}^u (\nu_i'\circ_{e_i} \nu_i'')\tau_i$$
denote the \emph{quadratic part} of the coproduct, see Section \ref{section:operads}.

\begin{theorem}[Transfer Theorem; expanded form] \label{thm:transfer}
With notation as in Theorem \ref{thm:implicit}, we have the following recursive formulas for the transferred $\Omega(\Coop)$-algebra structure $t'$ on $B$ and for the $\Omega(\Coop)$-structures $F',G',H'$ on $f,g,h$:

For $\nu\in\Coop(n)$ where $n\geq 2$, we have
\begin{align*}
(t')^\nu & = f t^\nu g^{\tensor n} + \sum_{q=1}^p f t^{\nu^q} (g^{\nu_1^q}\tensor \ldots \tensor g^{\nu_{r_q}^q})\sigma_q,\\
g^\nu & = h t^\nu g^{\tensor n} + \sum_{q=1}^p h t^{\nu^q} (g^{\nu_1^q}\tensor \ldots \tensor g^{\nu_{r_q}^q})\sigma_q, \\
f^\nu & = (-1)^{|\nu|} f t^\nu \HH{n} + \sum_{i=1}^u (-1)^{|\nu_i''|} (f^{\nu_i'}\circ_{e_i} t^{\nu_i''})\tau_i \HH{n},\\
h^\nu & = (-1)^{|\nu|} h t^\nu \HH{n} + \sum_{i=1}^u (-1)^{|\nu_i''|} (h^{\nu_i'}\circ_{e_i} t^{\nu_i''})\tau_i \HH{n}.
\end{align*}
In particular, $t'$ and $G'$ do not depend on the choice of pseudo-derivation $\thh$ extending $h$. However, $F'$ and $H'$ do depend on this choice.

\end{theorem}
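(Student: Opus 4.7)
The plan is to extract the $\nu$-components of the compact formulas of Theorem \ref{thm:implicit} by composing with the projection $\pi : \Coop[-] \to (-)$ onto the weight-one summand. On a cofree $\Coop$-coalgebra, every coderivation $\tau$ is determined by its corestriction $\pi\tau$, and every morphism of $\Coop$-coalgebras $\Phi$ is determined by $\pi\Phi$, so the families $(t')^\nu, f^\nu, g^\nu, h^\nu$ are precisely the values of $\pi t', \pi F', \pi G', \pi H'$ on $\nu(b_1,\ldots,b_n)$. The extraction rests on three observations: (i) since $F = \Coop[f]$ and $G = \Coop[g]$ preserve weight, $\pi F = f\pi$ and $\pi G = g\pi$; (ii) since $H = \Coop[\thh]$ acts on $\Coop(n)\otimes A^{\otimes n}$ by $1 \otimes \thh_n$, the corestriction $\pi H$ vanishes in weight $\geq 2$ and equals $h\pi$ in weight $1$; and (iii) a coderivation $t$ is recovered from its corestriction via the quadratic part $\Delta_{(1)}$, while a coalgebra morphism is recovered from its corestriction via the full coproduct $\Delta$. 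The compact formulas also admit the telescoping identities $t' = Ft G'$, $G' = G + HtG'$, $F' = F + F'tH$, and $H' = H + H'tH$, which drive the recursion.

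For the formulas for $(t')^\nu$ and $g^\nu$, project $G' = G + HtG'$ to get $\pi G' = \pi G + h\,\pi t G'$ by (ii). To evaluate $\pi t G'$ at $\nu(b_1,\ldots,b_n)$, decompose $G'\,\nu(b)$ using $\Delta(\nu)$: one obtains a weight-$n$ summand $\nu(gb_1,\ldots,gb_n)$ (from $\nu \circ 1^{\otimes n}$), a weight-one summand $g^\nu(b_1,\ldots,b_n)$ (from $1 \circ \nu$), and weight-$r_q$ summands $\nu^q\bigl(g^{\nu_1^q}(b_\cdot),\ldots,g^{\nu_{r_q}^q}(b_\cdot)\bigr)\sigma_q$, where the $g^{\nu_i^q}$ are defined recursively by the very same procedure at smaller arities. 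Now apply $t$. Since $\Coop$ is connected and $t$ strictly decreases weight, $t$ vanishes on weight $1$; hence the weight-one contributions to $t G'\,\nu(b)$ come only from the top split in $\Delta_{(1)}$ of each higher-weight summand, yielding $t^\nu(gb_1,\ldots,gb_n)$ and $t^{\nu^q}\bigl(g^{\nu_1^q}(b_\cdot),\ldots\bigr)\sigma_q$. Postcomposing with $h$ gives the claimed formula for $g^\nu$, and the parallel computation from $t' = FtG'$ and postcomposing with $f$ gives $(t')^\nu$.

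For the formulas for $f^\nu$ and $h^\nu$, project $F' = F + F'tH$ to get $\pi F' = \pi F + \pi F'\,tH$; for $n \geq 2$ the term $\pi F$ vanishes, so $\pi F'\,\nu(a) = \pi F'\,t(\nu(\thh_n a))$. Expanding $t$ on $\nu(\thh_n a)$ via $\Delta_{(1)}(\nu) = \sum_i (\nu_i' \circ_{e_i} \nu_i'')\tau_i$ produces a weight-one contribution from the degenerate split $\nu_i' = 1$, $\nu_i'' = \nu$, equal up to sign to $t^\nu(\thh_n a)$; applying $\pi F'$ gives $(-1)^{|\nu|}\,f\,t^\nu\,\HH{n}$. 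The remaining splits with $\nu_i'$ of arity $m_i' \geq 2$ yield weight-$m_i'$ terms of the form $\nu_i'(a_\cdot', \ldots, t^{\nu_i''}(a_\cdot'), \ldots)\tau_i$ with $a' = \thh_n a$; applying $\pi F'$ to such a summand reads off, by induction on arity, the component $f^{\nu_i'}$, which combined with the embedded $t^{\nu_i''}$ gives the partial composition $(-1)^{|\nu_i''|}(f^{\nu_i'} \circ_{e_i} t^{\nu_i''})\tau_i \HH{n}$, with sign coming from the Koszul rule for commuting $t$ past $\thh_n$ and the subfactors. Projecting $H' = H + H'tH$ and repeating the argument produces $h^\nu$, with $f$ replaced by $h$ in the outermost position.

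For the independence claim, argue by induction on arity $n$. The base case $g^1 = g$ is independent of $\thh$, and the recursive formulas for $(t')^\nu$ and $g^\nu$ involve only the components $g^{\nu_i^q}$ for $\nu_i^q$ of arity strictly less than $n$ together with the given structure constants $t^\mu$, none of which depend on $\thh$ by the induction hypothesis. By contrast, $f^\nu$ and $h^\nu$ explicitly contain the factor $\HH{n} = \thh_n$ coming from the innermost $H$. The main technical obstacle is careful sign bookkeeping through the Koszul sign rule, together with correctly separating the degenerate contributions of $\Delta_{(1)}$ that produce the leading terms $(-1)^{|\nu|}\,f\,t^\nu\,\HH{n}$ and $h\,t^\nu\,g^{\otimes n}$ from the non-degenerate contributions indexed by $i$ and $q$; once the conventions are fixed, the combinatorial rearrangement into the stated grouped forms is mechanical.
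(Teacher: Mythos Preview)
Your proposal is correct and follows essentially the same route as the paper. The paper packages your observations (i)--(iii) and the projection/extraction process into a single bookkeeping result, Proposition~\ref{prop:bridge}, which assigns to any thick map of $\Coop$-coalgebras $\thF$ the components $\thF^\nu = \epsilon^{\tensor m}\thF_m\iota_\nu$ and records the composition rule $(\thF\thG)^\nu = \sum_q (-1)^{|\thG||\nu_q'|}\thF^{\nu_q'}\thG^{\nu_q''}$; the proof of Theorem~\ref{thm:transfer} then consists of applying this rule mechanically to the recursions $\thGp = \thG + \thH\tht\thGp$ and $\thFp = \thF + \thFp\tht\thH$. Your argument carries out the same computation inline, reasoning directly with the coproduct expansion and the weight-one projection rather than invoking the abstract composition formula, but the underlying identifications (that $\thH$ contributes only $\thh_n$ via $\nu_q'' = 1^{\tensor n}$, and that $\tht$ contributes only through $\Delta_{(1)}$ because a weight-decreasing coderivation satisfies $\tht^{\nu_1\tensor\cdots\tensor\nu_m}=0$ unless exactly one $\nu_i$ has arity $>1$) are identical. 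One minor point: what you call ``induction on arity'' when applying $\pi F'$ to the weight-$m_i'$ summands is really just the definition $f^{\nu_i'} = (\thFp)^{\nu_i'}$; no inductive hypothesis is needed there, only in the independence argument for $t'$ and $G'$, which you handle correctly.
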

These recursive formulas may be interpreted as tree formulas. In Section 12 we explain this point of view in detail in the special case of $A_\infty$-algebras. In fact, in that case we recover exactly the formulas written down by Kontsevich-Soibelman \cite[\S 6.4]{KS}. See also \cite{Huebschmann-A}. In the case when $\Coop$ is the Koszul dual cooperad of a Koszul operad, similar considerations show more generally that the structure we obtain agrees with the one in Loday-Vallette \cite[Theorem 10.3.3]{LV}.

If the ground ring $\kk$ is a field then it is always possible to find a contraction between a chain complex $A$ and its homology $\H_*(A)$. Therefore, the following is a corollary to Theorem \ref{thm:transfer}.
\begin{corollary}[Minimality theorem for $\Omega(\Coop)$-algebras]
Suppose that $\kk$ is a field of characteristic zero and let $\Coop$ be a connected cooperad. Let $A$ be a chain complex with an $\Omega(\Coop)$-algebra structure $t$. Then there exist an $\Omega(\Coop)$-algebra structure $t'$ on the homology $\H_*(A)$, with trivial differential, and a lax contraction of $\Omega(\Coop)$-algebras
$$\bigSDR{(A,t)}{(\H_*(A),t')}{f^\bullet}{g^\bullet}{h^\bullet}.$$
In particular, every $\Omega(\Coop)$-algebra $(A,t)$ is quasi-isomorphic to a minimal $\Omega(\Coop)$-algebra $(\H_*(A),t')$. If $\Coop$ is a non-symmetric operad, then one may drop the assumption that $\kk$ is of characteristic zero.
\end{corollary}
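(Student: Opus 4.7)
The plan is to produce a contraction of chain complexes from $A$ to $\H_*(A)$ (with zero differential) and then invoke Theorem \ref{thm:transfer} to transfer everything. The first step uses the hypothesis that $\kk$ is a field: standard linear algebra yields a decomposition $A = B \oplus H \oplus B'$ with $B = \operatorname{im}(d_A)$, $H$ a chosen complement of $B$ in $\ker(d_A)$ (so $H \cong \H_*(A)$), and $B'$ a complement of $\ker(d_A)$ in $A$ on which $d_A$ restricts to an isomorphism onto $B$. Take $g\colon \H_*(A) \to A$ to be the inclusion identifying $\H_*(A)$ with $H$, $f\colon A \to \H_*(A)$ the corresponding projection, and $h\colon A \to A$ equal to $-d_A^{-1}$ on $B$ and zero on $H \oplus B'$. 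A direct check confirms $\partial(h) = gf - \tho$ together with the side conditions $fg = \tho$ and $fh = hg = h^2 = \thzr$, so this is a contraction of chain complexes.

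The next step is to supply a pseudo-derivation $\thh$ extending $h$ with the side conditions required by Theorem \ref{thm:transfer}. If $\Coop$ is non-symmetric, the formula $\thh_n = \sum_{p+1+q=n} 1^{\tensor p}\tensor h \tensor (gf)^{\tensor q}$ is always available; if $\Coop$ is symmetric, we take its symmetrization $\thh_n^\Sigma = \frac{1}{n!}\sum_{\sigma\in\Symgp{n}} \sigma^{-1}\thh_n\sigma$, and this is the sole point at which the characteristic-zero assumption enters. Both candidates satisfy $\partial(\thh) = \thg\thf - \tho$, $\thf\thh = \thh\thg = \thh\thh = \thzr$ by Theorem \ref{thm:tt}.

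Feeding these data into Theorem \ref{thm:transfer} produces an $\Omega(\Coop)$-algebra structure $t'$ on $\H_*(A)$ together with $\Omega(\Coop)$-structures $F', G', H'$ on $f, g, h$ fitting into a contraction of $\Coop$-coalgebras between $\BB(A,t)$ and $\BB(\H_*(A), t')$. By the definition given in the introduction, this is exactly a lax contraction of $\Omega(\Coop)$-algebras. Minimality is automatic because $\H_*(A)$ carries the zero differential by construction, so $(\H_*(A), t')$ is a minimal $\Omega(\Coop)$-algebra and $g^\bullet$ is a quasi-isomorphism.

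I do not anticipate any real obstacle: the substantive work — that $t'$ is a coderivation and that $F', G'$ are coalgebra morphisms — has already been carried out in Theorem \ref{thm:transfer}, so the proof amounts to assembling the two inputs (the contraction and the pseudo-derivation) and quoting the transfer theorem. The only delicate point, which explains the role of the characteristic hypothesis, is that a \emph{symmetric} pseudo-derivation is needed in the symmetric case, and the symmetrization formula requires $n!$ to be invertible.
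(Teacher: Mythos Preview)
Your proposal is correct and follows the same route as the paper, which simply observes that over a field one can always choose a contraction from $A$ to $\H_*(A)$ and then invokes the transfer theorem. You have merely spelled out the construction of the contraction and the choice of pseudo-derivation in more detail than the paper does.
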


\subsection*{Outline of the paper}
In Section \ref{section:background} we review the relevant background material on homological perturbation theory. In Section \ref{section:thick maps} we introduce some machinery for handling thick maps. The proofs of Theorem \ref{thm:PL} and Theorem \ref{thm:tt} are both separated into two parts, a first part dealing with formal properties of thick maps without reference to any operad, and a second part where the operad enters. Section \ref{section:thick contractions} contains the first part of the proof of Theorem \ref{thm:PL}. In it, we introduce and study pseudo-derivations and thick contractions. The first part of the proof of Theorem \ref{thm:tt} is contained in Section \ref{section:symmetric tensor trick} where we show how to extend any contraction to a symmetric thick contraction. Section \ref{section:operads} contains a review of the basic definitions concerning operads and cooperads that we will use. The second part of the proof of Theorem \ref{thm:PL} is contained in Section \ref{section:PL}. In 
Section \ref{section:TT} we extend the free $\Op$-algebra functor to the dg-category of thick maps, and we use this to finish the proof of Theorem \ref{thm:tt}. In Section \ref{section:cooperads} we define thick maps of $\Coop$-coalgebras, where $\Coop$ is a cooperad, and we give the duals of Theorem \ref{thm:PL} and Theorem \ref{thm:tt}. In Section \ref{section:maps} we prove a general result about thick maps between `cofree' $\Coop$-coalgebras which is used in the proof of Theorem \ref{thm:transfer}. In Section \ref{section:transfer} we give the proof of Theorem \ref{thm:implicit} and Theorem \ref{thm:transfer}. In Section \ref{section:tree} we illustrate how the formulas in Theorem \ref{thm:transfer} work in the case of $A_\infty$-algebras.

\subsection*{Conventions}
In this paper, the term `chain complex' will mean unbounded chain complex over a commutative ground ring $\kk$. The differential of a chain complex $A$ will be denoted by $d_A$ and we take it to be of degree $-1$. Recall that a \defn{dg-category} is a category $\Acat$ enriched over chain complexes, i.e., a collection of objects $\Ob \Acat$ and for every two objects $A$ and $B$ a chain complex $\Hom_\Acat(A,B)$, elements of which we will refer to as \defn{maps} from $A$ to $B$, together with natural composition and unit morphisms that satisfy standard unit and associativity axioms, see for instance \cite{Keller-dg}. We will use $\partial$ as a generic notation for the differential in $\Hom_\Acat(A,B)$. Thus, in the dg-category $\Cat$ of chain complexes, $\partial(f) = d_B f - (-1)^{|f|}fd_A$ for maps $f\in\Hom_{\Cat}(A,B)$.

\section{Background on homological perturbation theory} \label{section:background}
In this section we will review some of the classical results of homological perturbation theory. The central notion, which goes back to Eilenberg and MacLane \cite[\S 12]{EMI}, is that of a contraction.
\begin{definition}
A \defn{contraction} is a diagram of maps of chain complexes
$$\Diag\colon \SDR{A}{B}{f}{g}{h},$$
where $|f|=|g|=0$, $|h| = 1$, $\partial(f)=0$, $\partial(g)=0$, and
$$\partial(h) = gf - 1_A,\quad\quad fg=1_B.$$

Furthermore, we impose the annihilation conditions
$$fh = 0, \quad hh = 0, \quad hg = 0.$$
We say that $\Diag$ is a \defn{filtered contraction} if $A$ and $B$ are equipped with bounded below exhaustive filtrations which are preserved by the maps $f$, $g$ and $h$.
\end{definition}

In plain English, $f$ and $g$ are morphisms of chain complexes with $fg=1_B$ and $h$ is a chain homotopy from $gf$ to $1_A$. Thus, $B$ is a strong deformation retract of $A$. For this reason, the term `SDR-data' is often used as an alternative to `contraction'.

\begin{remark}
It is harmless to assume the annihilation conditions, as was pointed out in \cite{LamS}. If they are not satisfied, then one can replace $h$ by $h'' = -h'dh'$, where $h' = \partial(h)h\partial(h)$, to get a contraction.
\end{remark}

A \defn{perturbation} of $A$ is a map $t\colon A\rightarrow A$ of degree $-1$ such that $\partial(t) + t^2 = 0$, or, equivalently, $(d_A+t)^2 = 0$. Let $A^t$ denote the chain complex $A$ endowed with the new differential $d_A + t$. The following result is the basis for the theory.
\begin{theorem}[Basic Perturbation Lemma, \cite{Brown,Gugenheim}] \label{thm:BPL}
If $t$ is a perturbation of $A$ such that $1-ht$ is invertible then setting $\Sigma = t(1-ht)^{-1}$ the following formulas define a perturbation $t'$ of $B$ and a new contraction
\begin{equation*}
\Diag^t\colon \SDR{A^t}{B^{t'}}{f'}{g'}{h'},\quad\quad \begin{array}{ll} f' = f + f\Sigma h, &
g' = g + h\Sigma g, \\
h' = h + h\Sigma h, &
t' = f\Sigma g.\end{array}
\end{equation*}
\end{theorem}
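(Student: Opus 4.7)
My plan is to treat the lemma as a sequence of algebraic identities in the dg-category of chain complexes, all built on the geometric series $\Sigma = t + tht + ththt + \cdots = t(1-ht)^{-1}$. First I would establish the fundamental recursions
\[
\Sigma = t + \Sigma h t = t + t h \Sigma,
\]
which in particular imply $(1-ht)^{-1} = 1 + h\Sigma$ and $(1-th)^{-1} = 1 + \Sigma h$. Next, applying the dg-Leibniz rule to the first recursion and using the relations $\partial(t) = -t^2$ and $\partial(h) = gf - 1_A$, I would solve for $\partial(\Sigma)$: the Leibniz expansion yields $\partial(\Sigma)(1-ht) = -t^2 + \Sigma(1-ht)t - \Sigma g f t$, and after the cancellation $\Sigma(1-ht)t = t\cdot t = t^2$ and multiplication on the right by $(1-ht)^{-1}$, one obtains the clean closed form $\partial(\Sigma) = -\Sigma\, g f \,\Sigma$. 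This identity is the engine of the rest of the proof.

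With $\partial(\Sigma) = -\Sigma gf\Sigma$ in hand, most of the verifications reduce to short formal manipulations. Since $\partial(f) = \partial(g) = 0$, the computation $\partial(t') = f\partial(\Sigma)g = -(f\Sigma g)^2 = -(t')^2$ shows at once that $t'$ is a perturbation of $B$. The annihilations $f'h' = 0$, $h'h' = 0$, $h'g' = 0$ follow immediately upon expanding the products $(f+f\Sigma h)(h+h\Sigma h)$ and the analogous ones, since every resulting summand contains a factor $fh$, $hh$, or $hg$; similarly, $f'g' = 1_B$ drops out from $fg = 1_B$ together with the same annihilations. The chain map identity $\partial(f') = f't - t'f'$, expressing that $f'\colon A^t \to B^{t'}$ is a chain map, is verified by expanding $\partial(f + f\Sigma h)$ and substituting the closed form of $\partial(\Sigma)$; the resulting equation collapses to $f\Sigma = ft + f\Sigma h t$, which is just the recursion for $\Sigma$ premultiplied by $f$. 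The analogous verification for $g'$ uses the dual recursion postmultiplied by $g$.

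The most delicate step is the homotopy identity $\partial(h') + th' + h't = g'f' - 1_A$ for the perturbed contraction. Expanding $\partial(h + h\Sigma h)$ using Leibniz and substituting $\partial(\Sigma) = -\Sigma gf\Sigma$, the quartic terms $h\Sigma gf\Sigma h$ on the two sides of the desired identity cancel against the analogous terms in $g'f' = (g + h\Sigma g)(f + f\Sigma h)$, and after bookkeeping the remaining equation reduces to $\Sigma h + h\Sigma = th' + h't$. This last equation in turn follows from the two recursions for $\Sigma$: multiplying $\Sigma = t + t h\Sigma$ on the right by $h$ gives $\Sigma h = th + th\Sigma h$, and multiplying $\Sigma = t + \Sigma h t$ on the left by $h$ gives $h\Sigma = ht + h\Sigma h t$, whose sum is precisely $th' + h't$. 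I expect this last step to be the main obstacle — not because any individual manipulation is subtle, but because several three-factor products must be tracked in parallel and sign errors are easy to commit. Once $\partial(\Sigma) = -\Sigma gf\Sigma$ and the two recursions for $\Sigma$ are secured, however, the entire perturbation lemma collapses into a handful of lines of algebra.
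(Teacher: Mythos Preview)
The paper does not give a proof of this statement: Theorem~\ref{thm:BPL} is quoted as background from \cite{Brown,Gugenheim} and used freely thereafter. So there is no proof in the paper to compare against.

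That said, your proposed argument is correct and is essentially the standard direct verification. The key identity $\partial(\Sigma) = -\Sigma gf\Sigma$ does follow from applying $\partial$ to the recursion $\Sigma = t + \Sigma h t$ exactly as you describe, and once it is in hand the checks that $t'$ is a perturbation, that $f'g' = 1_B$, and that the three annihilation conditions hold are immediate. Your computation of the homotopy identity is also right: one indeed finds that $th' = \Sigma h$ and $h't = h\Sigma$ directly from the two recursions for $\Sigma$, so the remaining equation $th' + h't = \Sigma h + h\Sigma$ is automatic rather than delicate. If anything, your anticipated ``main obstacle'' is easier than you suggest, since $th' = t(h + h\Sigma h) = th'$ collapses in one step via $\Sigma = t + th\Sigma$.
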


\begin{remark}
In the original statement of the Basic Perturbation Lemma \cite{Gugenheim} one assumes that $\Diag$ is a filtered contraction and that the perturbation $t$ lowers the filtration on $A$. Then the infinite series $\sum_{n\geq 0} (ht)^n$ converges pointwise and defines an inverse of $1-ht$. It was observed in \cite{BL} that invertibility of $1-ht$ is a sufficient hypothesis. Observe also that invertibility of $1-ht$ is equivalent to invertibility of $1-th$. Indeed, $(1-th)^{-1} = 1+t(1-ht)^{-1}h$.
\end{remark}

\begin{definition}[\cite{GLS2,HK}] \label{def:algebra contraction}
A \defn{contraction of algebras} is a contraction $\Diag$ where $A$ and $B$ are differential graded algebras i.e., chain complexes equipped with morphisms $\mu_A\colon A\tensor A\rightarrow A$ and $\mu_B\colon B\tensor B\rightarrow B$, where $f$ and $g$ are morphisms of algebras and where $h$ is an \defn{algebra homotopy}, which means that 
$$h\mu_A = \mu_B(h\tensor gf + 1\tensor h).$$
\end{definition}

\begin{theorem}[Algebra Perturbation Lemma, {\cite[\S 2.2]{GLS2}}, {\cite[$(2.1^*)$]{HK}}] \label{thm:APL}
If $\Diag$ is a contraction of algebras and if the perturbation $t$ is a derivation, i.e., $t\mu_A = \mu_A(t\tensor 1 + 1\tensor t)$, then $\Diag^t$ is a contraction of algebras.
\end{theorem}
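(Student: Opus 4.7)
The plan is to derive the Algebra Perturbation Lemma from the Basic Perturbation Lemma (Theorem \ref{thm:BPL}) by applying it to a doubled contraction and exploiting the naturality of the BPL formulas.

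First, I would extend $\Diag$ to a contraction on the tensor squares,
$$\bigSDR{A\tensor A}{B\tensor B}{f\tensor f}{g\tensor g}{H},$$
by taking the homotopy $H := h\tensor gf + 1\tensor h$. The contraction axioms $(f\tensor f)H=0$, $H(g\tensor g)=0$, $H^2=0$, and $\partial(H)=(g\tensor g)(f\tensor f)-1$ are verified directly using $fh=0$, $hg=0$, $h^2=0$, $\partial(h)=gf-1$, and the associativity identities $gfh=g(fh)=0$ and $hgf=(hg)f=0$. Next, I would observe that $(\mu_A,\mu_B)$ intertwines the two contractions: the identities $\mu_B(f\tensor f)=f\mu_A$ and $\mu_A(g\tensor g)=g\mu_B$ are the algebra-morphism conditions on $f$ and $g$, while $\mu_A H = h\mu_A$ is precisely the algebra-homotopy condition of Definition \ref{def:algebra contraction}. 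Since $t$ is a derivation, $T:=t\tensor 1+1\tensor t$ is a perturbation of $A\tensor A$ (the Koszul sign on the cross term of $T^2$ makes it vanish) and the derivation identity reads $\mu_A T = t\mu_A$.

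The next ingredient is a naturality property of BPL, which follows directly from the explicit formulas: if $(\alpha,\beta)$ is a morphism of contractions and perturbations $t_1,t_2$ satisfy $\alpha t_1 = t_2\alpha$, then $\alpha\Sigma_1=\Sigma_2\alpha$ (proved by induction on the expansion $\Sigma_i=\sum_n t_i(h_it_i)^n$), and consequently $(\alpha,\beta)$ also intertwines the BPL-perturbed maps. Applying naturality to $(\mu_A,\mu_B)$ yields
$$\mu_B F' = f'\mu_A,\qquad \mu_A G' = g'\mu_B,\qquad \mu_A H' = h'\mu_A,\qquad \mu_B T' = t'\mu_B,$$
where $F',G',H',T'$ denote the BPL perturbations of the doubled contraction determined by $T$.

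The last step, which I expect to be the technical heart of the proof, is to identify the doubled perturbed maps in the expected tensor form: $F'=f'\tensor f'$, $G'=g'\tensor g'$, $T'=t'\tensor 1+1\tensor t'$, and $H'=h'\tensor g'f'+1\tensor h'$. Once these identifications hold, the four intertwining identities above become precisely the four conclusions of the theorem: that $f'$ and $g'$ are algebra morphisms, that $t'$ is a derivation, and that $h'$ is an algebra homotopy. To prove the identifications I would expand $\Sigma_T = T(1-HT)^{-1}$ as a geometric series; the annihilation identities $fh=0$ and $hg=0$ force all the ``cross terms'' that would mix the left and right tensor slots to vanish after composition with $f\tensor f$ on the left or $g\tensor g$ on the right, and what remains assembles into the asserted tensor factorisations. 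This combinatorial bookkeeping is the classical content of the tensor trick inside BPL, and is the template for the $\Op$-algebra Tensor Trick (Theorem \ref{thm:tt}) that the paper later axiomatizes and generalizes.
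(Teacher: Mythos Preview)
The paper does not supply its own proof of Theorem \ref{thm:APL}: it is quoted as background from \cite{GLS2,HK}. Your outline is a correct reconstruction of the classical argument, and the two ingredients you isolate---naturality of the BPL formulas under morphisms of contractions, and the tensor identifications $F'=f'\tensor f'$, $G'=g'\tensor g'$, $T'=t'\tensor 1+1\tensor t'$, $H'=h'\tensor g'f'+1\tensor h'$---do combine to give the result. You are also right that the identification step is where the work lies; expanding $(1-HT)^{-1}$ and repeatedly using $fh=hg=hh=0$ (and $gf\cdot h = h$, $h\cdot gf = 0$ in the right slots) is exactly how \cite{HK} carries this out.

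It is worth noting how your argument compares with the paper's later machinery. In the language of Section \ref{section:thick contractions}, your doubled contraction is the level-$2$ part of the thick contraction of Proposition \ref{prop:algebra contraction}, your naturality step is the statement that $\thfp,\thgp,\thhp,\thtp$ remain thick maps of $\Op$-algebras (Section \ref{section:PL}), and your identifications for $F',G',T'$ are the $n=2$ instances of Proposition \ref{prop:stability}. The one identification the paper deliberately does \emph{not} prove is $H'=h'\tensor g'f'+1\tensor h'$: Theorem \ref{thm:thick perturbation} only asserts that $\thhp$ is a pseudo-derivation, which is strictly weaker than being a $(\tho,\thgp\thfp)$-derivation. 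Thus the paper's generalization (Theorem \ref{thm:PL}) does not literally specialize to Theorem \ref{thm:APL} in the sense of Definition \ref{def:algebra contraction}; the classical statement about $h'$ being an algebra homotopy genuinely requires the extra computation you sketch, and this is precisely the asymmetric feature the paper replaces by the pseudo-derivation condition in order to handle the symmetric case.
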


The `Tensor Trick' is a way of producing an algebra contraction starting from any contraction. Recall that the \defn{tensor algebra} on a chain complex $A$ is the chain complex
$$\Talg(A) = \bigoplus_{n\geq 0} A^{\tensor n}$$
with multiplication $\mu\colon \Talg(A)\tensor \Talg(A)\rightarrow \Talg(A)$ induced by the canonical isomorphisms $A^{\tensor p}\tensor A^{\tensor q} \cong A^{\tensor (p+q)}$.
\begin{theorem}[Tensor Trick, {\cite[\S 3.2]{GL}},{\cite[\S 3]{GLS2}},{\cite[$(2.2.0^*)$]{HK}}] \label{thm:tensor trick}
For any contraction $\Diag$ the following is a contraction of algebras
$$\Talg(\Diag)\colon \SDR{\Talg(A)}{\Talg(B)}{F}{G}{H},$$
where $F$, $G$ and $H$ act on tensors of length $n$ by, respectively,
$$f^{\tensor n}, \quad g^{\tensor n}, \quad \sum_{i+1+j=n} 1^{\tensor i}\tensor h \tensor (gf)^{\tensor j}.$$
\end{theorem}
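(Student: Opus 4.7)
The plan is to verify each of the defining conditions of a contraction of algebras term by term, leveraging the fact that $F_n$, $G_n$, and $H_n$ are explicit sums of tensor products of $f$, $g$, $h$, and $1_A$, $1_B$.

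First I would check the easy structural parts. Since $f$ and $g$ are chain maps, so are $f^{\otimes n}$ and $g^{\otimes n}$, hence $\partial(F)=\partial(G)=0$. The identity $FG=1_{T(B)}$ is immediate from $fg=1_B$ on each tensor factor. Both $F$ and $G$ are algebra morphisms because they act on tensors by concatenating tensor powers, so $F_{p+q} = F_p \otimes F_q$ and similarly for $G$.

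Next I would compute $\partial(H_n)$ in each degree $n$. Applying the Koszul-Leibniz rule to a summand $1^{\otimes i}\otimes h\otimes (gf)^{\otimes j}$ kills every factor except $h$ (since $1$ and $gf$ are cycles), producing $(-1)^{i}\cdot 1^{\otimes i}\otimes \partial(h)\otimes(gf)^{\otimes j}$. Using $\partial(h) = gf - 1_A$ and summing over all positions $i+1+j=n$ yields a telescoping sum
\[
\partial(H_n) = \sum_{i+1+j=n}\Bigl(1^{\otimes i}\otimes gf\otimes(gf)^{\otimes j} - 1^{\otimes i}\otimes 1\otimes(gf)^{\otimes j}\Bigr) = (gf)^{\otimes n} - 1^{\otimes n},
\]
after the intermediate terms cancel in pairs. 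This is precisely $G_nF_n - 1$, so $\partial(H) = GF - 1_{T(A)}$.

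For the annihilation conditions I would use $fh=0$, $hg=0$, $hh=0$ termwise: each summand of $FH$ contains a factor $fh = 0$ in the position occupied by $h$ (since the leftmost $f$ still hits the position via $1^{\otimes i}$, but in fact $F_n\circ(1^{\otimes i}\otimes h\otimes(gf)^{\otimes j}) = f^{\otimes i}\otimes fh\otimes (fgf)^{\otimes j} = 0$), and symmetrically for $HG$. For $HH=0$, expanding $H_n H_n$ gives a sum of terms each containing either two factors $hh$ (if both inserted $h$'s coincide) or a factor $hg$ or $fh$ (from $gf$ meeting $h$), all of which vanish. Finally the algebra-homotopy identity $H\mu = \mu(H\otimes GF + 1\otimes H)$ reduces, under the isomorphism $A^{\otimes p}\otimes A^{\otimes q}\cong A^{\otimes(p+q)}$, to the splitting
\[
H_{p+q} = H_p \otimes (gf)^{\otimes q} + 1^{\otimes p}\otimes H_q,
\]
which is obvious by sorting the summands of $H_{p+q}$ according to whether the unique $h$ sits in one of the first $p$ slots or in one of the last $q$ slots.

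The only non-routine point is sign bookkeeping, particularly in the $HH=0$ verification, where two shifted occurrences of $h$ interact through the Koszul rule; the argument is straightforward but requires care. Everything else is a clean telescoping or case split.
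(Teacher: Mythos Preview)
The paper does not actually prove this theorem; it is quoted in Section~\ref{section:background} as a classical result from \cite{GL,GLS2,HK} and then used as a black box (e.g.\ in the proof of Proposition~\ref{prop:algebra contraction}). Your direct verification is the standard one and is essentially correct.

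One small slip: in the computation of $\partial(H_n)$ you write that the Leibniz rule produces a sign $(-1)^i$ in front of $1^{\otimes i}\otimes \partial(h)\otimes (gf)^{\otimes j}$. Since the factors $1$ to the left of $h$ all have degree~$0$, the correct sign is $(-1)^{0\cdot i}=1$, and indeed your displayed telescoping sum is written without the spurious sign, so the conclusion is unaffected. Also, your description of $HH=0$ (``two factors $hh$'') is slightly garbled; what you mean is that each cross term contains either a single factor $hh$ (when the two inserted $h$'s land in the same slot) or a factor $gfh=0$ or $hgf=0$ (when they land in different slots), and you should note that the Koszul sign incurred when commuting the two odd maps past each other is irrelevant since every term vanishes individually. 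With these cosmetic fixes the argument is complete.
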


As remarked in \cite[Remark, end of \S 2.2]{GLS2}, if $\mu_A$ is a commutative operation, then the left hand side of the equation
$$h\mu_A  = \mu_A(h\tensor gf + 1\tensor h)$$
is symmetric but the right hand side is not. For this reason, the present notion of an algebra homotopy is not useful for commutative algebras or, more generally, for algebras where symmetries play a role. In what follows, we will look for a symmetric generalization of the notion of an algebra homotopy such that Theorem \ref{thm:APL} and Theorem \ref{thm:tensor trick}, appropriately modified, remain valid.

\section{Thick maps} \label{section:thick maps}
\begin{definition} \label{def:thick map}
Let $A$ and $B$ be chain complexes. We define a \defn{thick map} $\thf \colon A\rightarrow B$ to be a sequence of maps
$$\thf  = \{\thf_n\colon A^{\tensor n}\rightarrow B^{\tensor n}\}_{n\geq 0}$$
of the same degree $|\thf|$. We say it is \defn{symmetric} if each $\thf_n$ is equivariant with respect to the action of the symmetric group $\Symgp{n}$ permuting tensor factors.
\end{definition}

There is a dg-category $\Th{\Cat}$ of thick maps. It has the same objects as the dg-category $\Cat$ of chain complexes but $\Hom_{\Th{\Cat}}(A,B)$ is the chain complex of thick maps from $A$ to $B$. The $\kk$-linear structure, differentials and compositions are defined by
\begin{align*}
(a\thf + b\thh)_n & = a\thf_n + b\thh_n,\\
\partial(\thf)_n & = d_{B^{\tensor n}} \thf_n - (-1)^{|\thf|}\thf_n d_{A^{\tensor n}}, \\
(\thg\circ \thf)_n & = \thg_n\circ \thf_n,
\end{align*}
for $\thf,\thh\colon A\rightarrow B$, $\thg\colon B\rightarrow C$, $a,b\in\kk$, and where $d_{A^{\tensor n}}$ is the ordinary tensor product differential on $A^{\tensor n}$. Chain complexes together with \emph{symmetric} thick maps form a dg-subcategory $\Ths{\Cat}$ of $\Th{\Cat}$. The \emph{identity} $\tho\colon A\rightarrow A$ and the \emph{zero map} $\thzr\colon A\rightarrow B$ are the thick maps with $\tho_n = 1_{A^{\tensor n}}$ and $\thzr_n = 0$. We will now give names to thick maps with special properties.

\begin{definition}
\begin{enumerate}
\item We say that a thick map $\thf \colon A\rightarrow B$ is a \defn{morphism} if $\thf_{p+q} = \thf_p\tensor \thf_q$ for all $p,q\geq 0$.
\item Let $\thl$ and $\thr$ be morphisms from $A$ to $B$. We say that a thick map $\thd\colon A\rightarrow B$ is an \defn{$(\thl,\thr)$-derivation} if $\thd_{p+q} = \thd_p\tensor \thr_q + \thl_p\tensor \thd_q$ for all $p,q\geq 0$.
\item For simplicity, a $(\tho,\tho)$-derivation $\thd\colon A\rightarrow A$ will be called a \defn{derivation}.
\end{enumerate}
\end{definition}

Let us also introduce a notational device. If $\thf \colon A\rightarrow B$ and $\thg \colon C\rightarrow D$ are two thick maps, we can form the bi-indexed sequence
$$\thf \tensor \thg = \{\thf_p\tensor \thg_q\colon A^{\tensor p}\tensor C^{\tensor q}\rightarrow B^{\tensor p}\tensor D^{\tensor q}\}_{p,q\geq 0}.$$
We can also form the bi-indexed sequence
$$m^*(\thf) = \{\thf_{p+q}\colon A^{\tensor p}\tensor A^{\tensor q}\rightarrow B^{\tensor p}\tensor B^{\tensor q}\}_{p,q\geq 0}.$$
Then it is clear that a thick map $\thf \colon A\rightarrow B$ is a morphism if and only if $m^*(\thf) = \thf\tensor \thf$
and that a thick map $\thd\colon A\rightarrow B$ is an $(\thl,\thr)$-derivation if and only if $m^*(\thd) = \thd\tensor\thr + \thl\tensor\thd$.

\section{Thick contractions} \label{section:thick contractions}
Using thick maps we can reformulate the notion of an algebra contraction in a way that lends itself to generalizations. By a \defn{thick contraction} we mean a contraction in the dg-category $\Th{\Cat}$.
\begin{proposition} \label{prop:algebra contraction}
Any contraction $\Diag$ has a unique extension to a thick contraction
$$\thDiag \colon \SDR{A}{B}{\thf}{\thg}{\thh}$$
where $\thf$ and $\thg$ are morphisms and $\thh$ is a $(\tho,\thg\thf)$-derivation. Furthermore, if $A$ and $B$ are algebras then $\Diag$ is an algebra contraction if and only if $\thf$, $\thg$ and $\thh$ are compatible with the algebraic structure in the sense that
$$\thf_1\mu_A = \mu_B \thf_2, \quad \thg_1\mu_B = \mu_A \thg_2, \quad \thh_1\mu_A = \mu_A \thh_2.$$
\end{proposition}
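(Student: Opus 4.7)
The plan is to first establish uniqueness, which in the process produces explicit formulas for $\thf$, $\thg$, $\thh$, and then to verify that these formulas satisfy the remaining contraction axioms. Uniqueness is immediate: the morphism property $\thf_{p+q} = \thf_p\tensor \thf_q$ together with $\thf_1 = f$ forces $\thf_n = f^{\tensor n}$, and similarly $\thg_n = g^{\tensor n}$. The $(\tho,\thg\thf)$-derivation identity $\thh_{p+q} = \thh_p\tensor (gf)^{\tensor q} + 1^{\tensor p}\tensor \thh_q$ together with $\thh_1 = h$ determines $\thh_n$ by induction, and the recursion unwinds to the closed formula
$$\thh_n = \sum_{i+1+j=n} 1^{\tensor i}\tensor h\tensor (gf)^{\tensor j}.$$
A direct check confirms that this formula does satisfy the derivation identity at every level, so existence reduces to verifying the contraction axioms for $\thDiag$.

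The identities $\partial(\thf) = \thzr$, $\partial(\thg) = \thzr$, and $\thf\thg = \tho$ reduce termwise to $\partial(f) = 0$, $\partial(g) = 0$, and $fg = 1_B$. For $\partial(\thh) = \thg\thf - \tho$ I apply the Leibniz rule to each summand of $\thh_n$; since $\partial$ annihilates $1_A$ and $gf$ while $\partial(h) = gf - 1_A$, only the $h$-slot contributes, and the resulting sum $\sum_{i+1+j=n} 1^{\tensor i}\tensor (gf - 1_A)\tensor (gf)^{\tensor j}$ telescopes to $(gf)^{\tensor n} - 1^{\tensor n}$. The annihilation identities $\thf\thh = \thzr$ and $\thh\thg = \thzr$ are immediate because each summand of $\thh_n$ has an $h$ in exactly one tensor position and $fh = hg = 0$.

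The step I expect to require the most care is $\thh\thh = \thzr$. The composition $\thh_n\thh_n$ is a double sum over pairs $(i,j)$ and $(i',j')$ indexing the summands, and I will argue that each term vanishes by a short case analysis on the two $h$-positions $i+1$ and $i'+1$: when $i = i'$ the term carries the factor $hh = 0$; when $i < i'$, position $i'+1$ carries $gf\circ h = 0$ because $fh = 0$; and symmetrically when $i > i'$, position $i+1$ carries $h\circ gf = 0$ because $hg = 0$.

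Finally, for the second assertion, the closed formula gives $\thh_2 = h\tensor gf + 1\tensor h$, so the three identities $\thf_1\mu_A = \mu_B\thf_2$, $\thg_1\mu_B = \mu_A\thg_2$, and $\thh_1\mu_A = \mu_A\thh_2$ unwind term by term to the algebra morphism conditions for $f$ and $g$ and the algebra homotopy identity for $h$ from Definition \ref{def:algebra contraction}. Both directions of the equivalence are simply this translation, so no further work is needed.
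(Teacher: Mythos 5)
Your proposal is correct, and its skeleton matches the paper's: uniqueness is obtained exactly as in the paper (the morphism and $(\tho,\thg\thf)$-derivation conditions force $\thf_n=f^{\tensor n}$, $\thg_n=g^{\tensor n}$ and the closed formula for $\thh_n$), and the ``furthermore'' statement is handled in both cases by unwinding Definition \ref{def:algebra contraction}, using $\thh_2 = h\tensor gf + 1\tensor h$. The one place you diverge is the existence step: the paper simply observes that the forced formulas are the maps appearing in the classical Tensor Trick (Theorem \ref{thm:tensor trick}) and cites that theorem to conclude they form a thick contraction, whereas you verify the contraction axioms level by level from scratch. Your verification is sound: $\partial(\thf)=\thzr$, $\partial(\thg)=\thzr$, $\thf\thg=\tho$ are immediate; the Leibniz-plus-telescoping computation does give $\partial(\thh)_n=(gf)^{\tensor n}-1^{\tensor n}$; $\thf\thh=\thzr$ and $\thh\thg=\thzr$ follow from $fh=0$, $hg=0$; and your three-case analysis for $\thh\thh=\thzr$ (using $hh=0$, $fh=0$, $hg=0$ according to the relative position of the two $h$-slots) is exactly right, with Koszul signs irrelevant since every term already contains a zero factor. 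What your route buys is self-containedness (the paper's proof outsources precisely this verification to the cited literature); what it costs is a page of routine checking that the paper avoids.
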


\begin{proof}
Requiring that $\thf$, $\thg$ are morphisms and that $\thh$ is a $(\tho,\thg\thf)$-derivation leaves us with no choice but to set
$$\thf_n = f^{\tensor n}, \quad \thg_n = g^{\tensor n}, \quad \thh_n = \sum_{i+1+j=n} 1^{\tensor i}\tensor h \tensor (gf)^{\tensor j}.$$
But these formulas coincide with the formulas in the Tensor Trick (Theorem \ref{thm:tensor trick}), and it is a consequence of that theorem that they define a thick contraction. Next, $\Diag$ is an algebra contraction (Definition \ref{def:algebra contraction}) if and only if
$$fm_A = m_Bf^{\tensor 2},\quad g\mu_B = \mu_A g^{\tensor 2}, \quad h\mu_A = \mu_A (h\tensor gf + 1\tensor h).$$
In view of our definition of $\thf$, $\thg$ and $\thh$, these conditions are the same as the conditions in the statement of the proposition.
\end{proof}

We repeat that the problem with algebra homotopies is the asymmetry in the expression $h\tensor gf + 1\tensor h$. In other words, the problem is that if a thick map $\thh$ is a $(\tho,\thg\thf)$-derivation, then it can hardly be symmetric in the sense of Definition \ref{def:thick map}. The goal for the remainder of this section is the following:
\emph{Generalize the condition `$\thh$ is a $(\tho,\thg\thf)$-derivation' to a condition that makes sense for symmetric thick maps.}
There are two constraints:
\begin{itemize}
\item The condition should be sufficiently close to the $(\tho,\thg\thf)$-derivation condition so that the proof of the Algebra Perturbation Lemma goes through.
\item The condition should be flexible enough so as to allow for a `symmetric tensor trick', i.e., an extension of any contraction to a \emph{symmetric} thick contraction which satisfies the condition.
\end{itemize}

We will argue that the following definition contains the solution to this problem.
\begin{definition} \label{def:pseudo-derivation}
A thick map $\thh\colon A\rightarrow A$ is a \defn{pseudo-derivation} if
$$(\thh\tensor \tho - \tho\tensor \thh)m^*(\thh) = \thh\tensor\thh,$$
and
$$m^*(\thh)(\thh\tensor \tho - \tho\tensor \thh) = -\thh\tensor \thh.$$
In other words, $\thh$ is a pseudo-derivation if for all $p,q\geq 0$
$$(\thh_p\tensor \tho - \tho\tensor \thh_q)\thh_{p+q} = \thh_p\tensor \thh_q$$
and
$$\thh_{p+q}(\thh_p\tensor \tho - \tho \tensor \thh_q) = -\thh_p\tensor \thh_q.$$
\end{definition}
For the rest of the section, fix a thick contraction
$$\thDiag\colon \SDR{A}{B}{\thf}{\thg}{\thh}.$$
To begin with, let us note that pseudo-derivations generalize $(\tho,\thg\thf)$-derivations.
\begin{proposition} \label{prop:pseudo-derivations generalize derivations}
If the homotopy $\thh$ in $\thDiag$ is a $(\tho,\thg\thf)$-derivation then $\thh$ is a pseudo-derivation.
\end{proposition}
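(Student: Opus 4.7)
The plan is to exploit the $(\tho,\thg\thf)$-derivation formula for $\thh_{p+q}$ directly: substitute it into both sides of the two pseudo-derivation identities, expand, and show that the unwanted summands are forced to vanish by the annihilation conditions $\thh\thh = \thzr$, $\thh\thg = \thzr$, $\thf\thh = \thzr$ that are already part of the thick contraction $\thDiag$.

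Concretely, since $\thf_n = f^{\tensor n}$ and $\thg_n = g^{\tensor n}$ by Proposition \ref{prop:algebra contraction}, the composition satisfies $(\thg\thf)_q = (gf)^{\tensor q}$, so the derivation hypothesis reads $\thh_{p+q} = \thh_p \tensor (gf)^{\tensor q} + \tho_p \tensor \thh_q$. Expanding $(\thh_p\tensor\tho_q - \tho_p\tensor\thh_q)\,\thh_{p+q}$ into four summands, three of them contain a factor of the form $\thh_p\thh_p$, $\thh_q\thh_q$, or $\thh_q g^{\tensor q}$, all of which vanish by the annihilation conditions; the only survivor is $(\thh_p\tensor\tho_q)(\tho_p\tensor\thh_q) = \thh_p\tensor\thh_q$ (with trivial Koszul sign, since $|\tho_q| = 0$), delivering the first identity. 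The mirror expansion of $\thh_{p+q}(\thh_p\tensor\tho_q - \tho_p\tensor\thh_q)$ is handled in exactly the same way, with $\thh_p\thh_p$, $\thh_q\thh_q$, and $f^{\tensor q}\thh_q$ (appearing via $(gf)^{\tensor q}\thh_q = g^{\tensor q}f^{\tensor q}\thh_q$) eliminating three summands and leaving $(\tho_p\tensor\thh_q)(\thh_p\tensor\tho_q) = (-1)^{|\thh_q||\thh_p|}\thh_p\tensor\thh_q = -\thh_p\tensor\thh_q$ by the Koszul sign rule.

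The main obstacle is nothing more than keeping Koszul signs straight and matching each of the six extraneous summands to the correct annihilation condition; no deeper input is required beyond the structure already present in any thick contraction.
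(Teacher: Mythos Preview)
Your proof is correct and follows essentially the same computation as the paper's: substitute the $(\tho,\thg\thf)$-derivation formula for $m^*(\thh)$, expand, and kill three of the four terms using $\thh\thh=\thzr$, $\thh\thg=\thzr$, $\thf\thh=\thzr$. One minor remark: the appeal to Proposition~\ref{prop:algebra contraction} to write $(\thg\thf)_q = (gf)^{\tensor q}$ is unnecessary and not quite justified, since $\thDiag$ here is an \emph{arbitrary} thick contraction; the contraction axioms $\thh\thg=\thzr$ and $\thf\thh=\thzr$ already give $\thh_q(\thg\thf)_q = (\thh\thg)_q\thf_q = 0$ and $(\thg\thf)_q\thh_q = \thg_q(\thf\thh)_q = 0$ directly, which is all you use.
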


\begin{proof}
This is a simple calculation:
\begin{align*}
(\thh\tensor \tho - \tho\tensor \thh)m^*(\thh) & = (\thh\tensor \tho - \tho\tensor \thh)(\tho\tensor \thh + \thh \tensor \thg\thf) \\
& = \thh\tensor \thh - \tho\tensor \thh\thh + \thh\thh\tensor \thg\thf + \thh \tensor \thh\thg\thf \\
&= \thh\tensor \thh.
\end{align*}
Here we have used the annihilation conditions $\thh\thh = \thzr$ and $\thh\thg = \thzr$. Similarly, one verifies that $-m^*(\thh)(\thh\tensor \tho - \tho\tensor \thh) = \thh\tensor \thh$.
\end{proof}

Fix a thick perturbation $\tht$ of $A$, i.e., a thick map of degree $-1$ satisfying $\partial(\tht) + \tht^2 = \thzr$. Suppose that $\tho-\thh\tht$ (or equivalently $\tho-\tht\thh$) is invertible, so that we can use the formulas of the Basic Perturbation Lemma (Theorem \ref{thm:BPL}) to define thick maps $\thfp$, $\thgp$, $\thhp$, $\thtp$. The following theorem, which shows that the pseudo-derivation property is sufficient to make the Algebra Perturbation Lemma work, is the main result of this section.
\begin{theorem} \label{thm:thick perturbation}
Let $\thDiag$ be a thick contraction. If $\thf$ and $\thg$ are morphisms, $\thh$ a pseudo-derivation and $\tht$ a derivation, then $\thfp$ and $\thgp$ are morphisms, $\thhp$ a pseudo-derivation, $\thtp$ a derivation, $t = \tht_1$ and $t' = \tht_1'$ are perturbations of $A$ and $B$, respectively, and
$$\thDiag^{\tht}\colon \SDR{A^{t}}{B^{t'}}{\thfp}{\thgp}{\thhp}$$
is a thick contraction. Furthermore, if $\thh$ is symmetric, then so is $\thhp$.
\end{theorem}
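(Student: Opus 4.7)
The first claim to dispatch is that $\thDiag^\tht$ is indeed a thick contraction and that $t=\tht_1,\ t'=\tht_1'$ are genuine perturbations. Since the recursive formulas in the statement coincide level-by-level with those of the Basic Perturbation Lemma (Theorem \ref{thm:BPL}), and the latter is a formal statement valid in any dg-category, I would apply it in $\Hom_\Cat(A^{\tensor n},B^{\tensor n})$ for each $n$ to obtain these identities. All the remaining content is propagation of the structural adjectives: morphism, derivation, pseudo-derivation, and symmetry.

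I would handle these in the bi-indexed framework of Section \ref{section:thick maps}, where $m^*(\thf)_{p,q} = \thf_{p+q}$, the operation $m^*$ is multiplicative and $\kk$-linear, and $\thf$ is a morphism iff $m^*(\thf) = \thf\tensor\thf$, $\tht$ is a derivation iff $m^*(\tht)=\tht\tensor\tho+\tho\tensor\tht$, and $\thh$ is a pseudo-derivation iff $Y\,m^*(\thh) = \thh\tensor\thh = -m^*(\thh)\,Y$ with $Y := \thh\tensor\tho-\tho\tensor\thh$. The morphism property of $\thfp$ would follow by a uniqueness-of-fixed-point argument: applying $m^*$ to $\thfp=\thf+\thfp\tht\thh$ gives
$$m^*(\thfp) = \thf\tensor\thf + m^*(\thfp)\cdot K, \qquad K := (\tht\tensor\tho+\tho\tensor\tht)\,m^*(\thh),$$
and I would show in parallel that $\thfp\tensor\thfp$ satisfies the same equation $U = \thf\tensor\thf + UK$, by expanding $\thfp = \thf + \thfp\tht\thh$ twice and absorbing the cross terms using the pseudo-derivation identities for $\thh$ together with the annihilations $\thh\thh = \thzr$ and $\thf\thh = \thzr$. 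Since $\tho - \thh\tht$ is invertible level-wise, so is $1-K$, and uniqueness of the solution forces $m^*(\thfp) = \thfp\tensor\thfp$; the case of $\thgp$ is dual.

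With the morphism property of $\thfp,\thgp$ in hand, the derivation property of $\thtp = \thf\tht\thgp$ is a quick calculation. From $\thf\thh = \thzr$ and $\thf\thg = \tho$ one sees $\thf\thgp = \thf\thg + \thf\thh\tht\thgp = \tho$, so
$$m^*(\thtp) = (\thf\tensor\thf)(\tht\tensor\tho+\tho\tensor\tht)(\thgp\tensor\thgp) = \thtp\tensor\tho+\tho\tensor\thtp,$$
the cross terms being killed by $\thf\thgp = \tho$. Symmetry of $\thhp$, when $\thh$ is symmetric, is automatic: morphisms and derivations are determined by their arity-one parts and are symmetric for free, so every term in the expansion $\thhp = \sum_{k\geq 0}\thh(\tht\thh)^k$ is symmetric.

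The main obstacle is the pseudo-derivation property of $\thhp$, because the defining relations are quadratic in the map, and so, unlike the morphism and derivation conditions, they are not preserved by linear combinations of candidate solutions. My approach would be to introduce $\Phi := Y'\,m^*(\thhp) - \thhp\tensor\thhp$ with $Y' := \thhp\tensor\tho - \tho\tensor\thhp$ (and a companion quantity $\Psi$ for the second pseudo-derivation equation), substitute $\thhp = \thh + \thhp\tht\thh$ throughout, and carefully manipulate using the pseudo-derivation identities for $\thh$, the derivation identity for $\tht$, and the annihilations $\thh\thh = \thzr$, $\thh\thg = \thzr$, until $\Phi$ is shown to satisfy a fixed-point equation $\Phi = \Phi K$ with the same operator $K$ as above (or a conjugate of it). Invertibility of $1-K$ then forces $\Phi = 0$, yielding $Y'\,m^*(\thhp) = \thhp\tensor\thhp$, and symmetrically $m^*(\thhp)\,Y' = -\thhp\tensor\thhp$ from the analysis of $\Psi$.
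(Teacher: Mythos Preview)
Your overall strategy is the paper's, but the sketch underestimates the work at two points.

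First, ordering. You dispatch the thick contraction property by applying the Basic Perturbation Lemma levelwise, deferring the structural adjectives. But BPL at level $n$ produces a contraction between $(A^{\tensor n}, d_{A^{\tensor n}}+\tht_n)$ and $(B^{\tensor n}, d_{B^{\tensor n}}+\thtp_n)$; for this to be the $n$th level of a thick contraction from $A^t$ to $B^{t'}$ one needs $d_{A^{\tensor n}}+\tht_n = d_{(A^t)^{\tensor n}}$ and $d_{B^{\tensor n}}+\thtp_n = d_{(B^{t'})^{\tensor n}}$. The first holds because $\tht$ is a derivation by hypothesis, but the second requires that $\thtp$ be a derivation --- a conclusion, not an input. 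The paper is explicit about this: the identities $\thfp\thgp=\tho$, $\thfp\thhp=\thzr$, $\thhp\thhp=\thzr$, $\thhp\thgp=\thzr$ do follow levelwise, but the relations $\partial(\thfp)=\thzr$, $\partial(\thgp)=\thzr$, $\partial(\thhp)=\thgp\thfp-\tho$ in the thick sense need the derivation property of both $\tht$ and $\thtp$. So the structural claims must be established first, and the contraction property last.

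Second, the inputs to your fixed-point computations. To verify that $\thfp\tensor\thfp$ satisfies $U=\thf\tensor\thf+UK$ you will need identities such as $(\thf\tensor\tho)m^*(\thh)=\thf\tensor\thh$ and $(\tho\tensor\thf)m^*(\thh)=\thh\tensor\thf$; these do not follow from the pseudo-derivation identity together with $\thh\thh=\thf\thh=\thzr$ alone. The paper isolates them as \emph{module conditions} and derives them from the full set of hypotheses (including that $\thf,\thg$ are morphisms and $\thh\thg=\thzr$) via an auxiliary set of \emph{annihilation conditions}; your sketch elides this layer. Likewise, your plan for the pseudo-derivation property of $\thhp$ --- showing $\Phi=\Phi K$, i.e.\ $\Phi\,m^*(\tho-\tht\thh)=\thzr$ --- will not close as stated: after right-multiplying, the remaining expression $Y'\,m^*(\thh)-(\thhp\tensor\thhp)\,m^*(\tho-\tht\thh)$ still contains $\thhp$ through $Y'=\thhp\tensor\tho-\tho\tensor\thhp$. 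The paper's device is to \emph{also} multiply on the left by $(\tho-\thh\tht)\tensor(\tho-\thh\tht)$, which converts those $\thhp$'s back to $\thh$'s via $(\tho-\thh\tht)\thhp=\thh$, after which the pseudo-derivation identity for $\thh$ and the derivation identity for $\tht$ finish the calculation directly. Your hedge ``or a conjugate of it'' gestures at this, but the two-sided sandwich is the actual mechanism, and without it the fixed-point equation you write down does not hold.
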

The proof of this theorem will occupy the rest of the section.
\vskip8pt
\begin{proposition}
If $\thh$ is a pseudo-derivation and $\tht$ is a derivation then $\thhp$ is a pseudo-derivation.
\end{proposition}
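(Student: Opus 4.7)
The plan is to exploit the two recursions $\thhp = \thh + \thh\tht\thhp = \thh + \thhp\tht\thh$ coming from the BPL formula, together with the observation that $m^*$ is multiplicative under composition, $m^*(\thf\thg) = m^*(\thf)m^*(\thg)$, since composition in $\Th{\Cat}$ is levelwise and $m^*(\thf)_{p,q} = \thf_{p+q}$. Applying $m^*$ to the recursion and using the derivation property $m^*(\tht) = \tht\tensor\tho + \tho\tensor\tht$ yields a recursion for $m^*(\thhp)$ in terms of $m^*(\thh)$ and the ``left'' and ``right'' copies of $\tht$.

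I would then adopt the shorthand $\thx^L = \thx\tensor\tho$ and $\thx^R = \tho\tensor\thx$. In this notation $L$-factors commute with $R$-factors, $\thh^L\thh^R = \thh\tensor\thh$, and the hypothesis that $\thh$ is a pseudo-derivation reads $(\thh^L-\thh^R)m^*(\thh) = \thh^L\thh^R$ and $m^*(\thh)(\thh^L-\thh^R) = -\thh^L\thh^R$. Applying $L$ and $R$ separately to the two forms of the recursion produces the closed forms $\thhp^L = (\tho - \thh^L\tht^L)^{-1}\thh^L$ and $\thhp^R = \thh^R(\tho - \tht^R\thh^R)^{-1}$, and the task reduces to verifying $(\thhp^L - \thhp^R)m^*(\thhp) = \thhp^L\thhp^R$ together with its dual.

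I would carry out the verification by expanding $m^*(\thhp)$ as a geometric series in $m^*(\thh)(\tht^L + \tht^R)$ and sliding the operator $\thh^L - \thh^R$ through the series one factor at a time, using the pseudo-derivation identity for $\thh$, the annihilation $\thh\thh = \thzr$, and the commutation of $L$'s with $R$'s. The derivation property $m^*(\tht) = \tht^L + \tht^R$ is precisely what routes the two copies of $\tht$ into the two separate slots, so that the surviving terms reassemble into the geometric series defining $\thhp^L$ on the left and $\thhp^R$ on the right, producing $\thhp^L\thhp^R$; the dual identity follows by the mirrored argument. A slightly tidier packaging is to introduce a formal parameter $\lambda$, put $\thh_\lambda = \thh + \lambda\thh\tht\thh_\lambda$, and observe that $P(\thh_\lambda) := (\thh_\lambda^L-\thh_\lambda^R)m^*(\thh_\lambda) - \thh_\lambda^L\thh_\lambda^R$ satisfies a first-order linear identity in $\lambda$ with $P(\thh) = \thzr$, forcing $P \equiv \thzr$ and hence the result at $\lambda = 1$. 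The main obstacle is the combinatorial bookkeeping required to see that, after applying the pseudo-derivation identity to each $m^*(\thh)$, the surviving terms factor cleanly as a product of a geometric series in $\thh^L\tht^L$ on the left and $\tht^R\thh^R$ on the right.
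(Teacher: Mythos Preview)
Your setup is sound: $m^*$ is multiplicative, $m^*(\tht)=\tht\tensor\tho+\tho\tensor\tht$, and the two recursions $\thhp=\thh+\thh\tht\thhp=\thh+\thhp\tht\thh$ (equivalently $(\tho-\thh\tht)\thhp=\thhp(\tho-\tht\thh)=\thh$) are exactly the identities one needs. The $L/R$ shorthand is fine too. Where your proposal stalls is at the step you yourself flag as the ``main obstacle'': sliding $\thh^L-\thh^R$ through the geometric series $(\tho-m^*(\thh)(\tht^L+\tht^R))^{-1}m^*(\thh)$ term by term does not telescope cleanly, because once $(\thh^L-\thh^R)m^*(\thh)$ has been replaced by $\thh^L\thh^R$, this factor no longer interacts with the next $m^*(\thh)$ via the pseudo-derivation identity. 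Your $\lambda$-variant is also only a sketch; the asserted ``first-order linear identity'' for $P(\thh_\lambda)$ is not exhibited and is not obvious.

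The paper avoids all of this bookkeeping with a one-step conjugation trick. Rather than expand anything, multiply the desired identity
\[
(\thhp\tensor\tho-\tho\tensor\thhp)\,m^*(\thhp)=\thhp\tensor\thhp
\]
on the left by the invertible element $(\tho-\thh\tht)\tensor(\tho-\thh\tht)$ and on the right by the invertible element $m^*(\tho-\tht\thh)$. Using $(\tho-\thh\tht)\thhp=\thh$ and $\thhp(\tho-\tht\thh)=\thh$, both sides collapse to expressions in $\thh$ and $\tht$ only: the left-hand side becomes
\[
\big(\thh\tensor(\tho-\thh\tht)-(\tho-\thh\tht)\tensor\thh\big)m^*(\thh)
=(\thh\tensor\tho-\tho\tensor\thh)m^*(\thh)-(\thh\tensor\thh)m^*(\tht)m^*(\thh),
\]
and the right-hand side becomes $(\thh\tensor\thh)m^*(\tho-\tht\thh)$. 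Now the pseudo-derivation identity for $\thh$ turns the first term into $\thh\tensor\thh$, and the second term is $(\thh\tensor\thh)m^*(\tht\thh)$ by the derivation property of $\tht$; the two sides match on the nose. Invertibility then gives the identity for $\thhp$. The dual identity is handled by the mirrored conjugation. This is a five-line computation with no series, no induction, and no combinatorial bookkeeping; I recommend you replace the geometric-series plan with this argument.
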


\begin{proof}
We need to show that $(\thhp\tensor \tho - \tho\tensor \thhp)m^*(\thhp) = -m^*(\thhp)(\thhp\tensor \tho - \tho\tensor \thhp) = \thhp\tensor \thhp$. If we multiply the right hand side from the left with $(\tho-\thh \tht)\tensor (\tho-\thh \tht)$ and from the right with $m^*(\tho-\tht\thh)$ and use that $(\tho-\thh \tht)\thhp = \thhp (\tho-\tht\thh) = \thh$ we get
\begin{align*}
\big((\tho-\thh \tht)\tensor (\tho-\thh \tht)\big)&  (\thhp\tensor \tho - \tho\tensor \thhp)m^*(\thhp)m^*(\tho-\tht\thh) \\
& = (\thh\tensor (\tho-\thh\tht) - (\tho-\thh\tht)\tensor \thh)m^*(\thh) \\
& = (\thh\tensor \tho - \tho\tensor \thh)m^*(\thh) - (\thh\tensor \thh)(\tht\tensor \tho + \tho\tensor \tht)m^*(\thh) \\
& = \thh\tensor \thh - (\thh\tensor \thh)m^*(\tht\thh) \\
& = (\thh\tensor \thh)m^*(\tho-\tht\thh) \\
& = \big((\tho-\thh\tht)\tensor (\tho-\thh\tht)\big)(\thhp\tensor \thhp)m^*(\tho-\tht\thh).
\end{align*}
Since $(\tho-\thh\tht)$ and $(\tho-\tht\thh)$ are invertible, the above equation implies that
$$(\thhp\tensor \tho - \tho\tensor \thhp)m^*(\thhp) = \thhp\tensor \thhp.$$
Similarly one verifies that $-m^*(\thhp)(\thhp\tensor \tho - \tho\tensor \thhp) = \thhp\tensor \thhp$.
\end{proof}

We will see in Proposition \ref{prop:subsume} below that the hypotheses in Theorem \ref{thm:thick perturbation} imply the following additional conditions:

\noindent {\bf Module conditions.}
\begin{align*}
(\thf\tensor \tho)m^*(\thh) & = \thf\tensor \thh & m^*(\thh)(\thg\tensor \tho) & = \thg\tensor \thh \\
(\tho\tensor \thf)m^*(\thh) & = \thh\tensor \thf & m^*(\thh)(\tho\tensor \thg) & = \thh\tensor \thg
\end{align*}
The module conditions together with the pseudo-derivation property are exactly what we need to ensure that $\thfp$ and $\thgp$ are morphisms and that $\thtp$ is a derivation provided that $\thf$ and $\thg$ are morphisms and $\tht$ is a derivation.
\begin{proposition} \label{prop:stability}
Suppose that $\thh$ is a pseudo-derivation, that the module conditions are satisfied and that $\tht$ is a derivation.
\begin{enumerate}
\item If $\thf$ is a morphism then so is $\thfp$. \label{f}
\item If $\thg$ is a morphism then so is $\thgp$. \label{g}
\item If $\thf$ and $\thg$ are morphisms then $\thtp$ is a derivation. \label{t}
\end{enumerate}
\end{proposition}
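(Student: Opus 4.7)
My plan is to verify each part of the proposition by applying the dg-functor $m^*$ to the defining recursions from the Basic Perturbation Lemma and exploiting the four hypotheses---the morphism property of $\thf$ (and $\thg$), the derivation property of $\tht$, the pseudo-derivation property of $\thh$, and the module conditions---to bring the resulting identities into the desired form. The dg-functoriality $m^*(\alpha\beta) = m^*(\alpha)m^*(\beta)$ together with these hypotheses immediately yields $m^*(\thf) = \thf\tensor\thf$ and $m^*(\tht\thh) = (\tht\tensor\tho+\tho\tensor\tht)m^*(\thh)$, and I shall use these facts repeatedly.

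As a preliminary step, I would derive from the stated module conditions the following \emph{extended module conditions}: for any thick map $X$,
$$(\thf\tensor X)m^*(\thh) = \thf\tensor X\thh, \qquad (X\tensor\thf)m^*(\thh) = X\thh\tensor\thf.$$
These follow by writing, for instance, $\thf\tensor X = (\tho\tensor X)\circ(\thf\tensor\tho)$, applying the given $(\thf\tensor\tho)m^*(\thh) = \thf\tensor\thh$, and then using that the action in the remaining tensor slot decouples over composition. An immediate consequence is the annihilation $(\thf\tensor\thf)m^*(\thh) = \thf\thh\tensor\thf = \thzr$, which combines the module conditions with $\thf\thh = \thzr$.

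For part (1), applying $m^*$ to $\thfp(\tho-\tht\thh) = \thf$ gives $m^*(\thfp)\,m^*(\tho-\tht\thh) = \thf\tensor\thf$, while the morphism property of $\thf$ gives $(\thfp\tensor\thfp)\bigl[(\tho-\tht\thh)\tensor(\tho-\tht\thh)\bigr] = \thf\tensor\thf$. Since $m^*(\tho-\tht\thh)$ is invertible, it suffices to show that $(\thfp\tensor\thfp)R = \thzr$, where
$$R = m^*(\tho-\tht\thh) - (\tho-\tht\thh)\tensor(\tho-\tht\thh) = (\tht\tensor\tho)\bigl[\thh\tensor\tho - m^*(\thh)\bigr] + (\tho\tensor\tht)\bigl[\tho\tensor\thh - m^*(\thh)\bigr] - \tht\thh\tensor\tht\thh.$$
I would expand $(\thfp\tensor\thfp)R$ by writing $\thfp = \thf + Z$ with $Z = \thfp\tht\thh$ in each tensor slot, use the extended module conditions to kill the four terms in which $\thf$ pairs with either $\thf$ or a general $X$, and then reduce the remaining cross terms using the pseudo-derivation identity $(\thh\tensor\tho - \tho\tensor\thh)m^*(\thh) = \thh\tensor\thh$ until the surviving contribution cancels precisely against $-\tht\thh\tensor\tht\thh$.

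For part (2) the argument is exactly dual, using the recursion $(\tho-\thh\tht)\thgp = \thg$, the right module conditions, and the second pseudo-derivation identity $m^*(\thh)(\thh\tensor\tho - \tho\tensor\thh) = -\thh\tensor\thh$. For part (3), once (1) and (2) are in hand, functoriality of $m^*$ gives $m^*(\thtp) = (\thf\tensor\thf)(\tht\tensor\tho + \tho\tensor\tht)(\thgp\tensor\thgp) = \thf\tht\thgp\tensor\thf\thgp + \thf\thgp\tensor\thf\tht\thgp$; from the recursion $\thgp = \thg + \thh\tht\thgp$ together with the annihilations $\thf\thh = \thzr$ and $\thf\thg = \tho$, one then computes $\thf\thgp = \tho$, so $m^*(\thtp) = \thtp\tensor\tho + \tho\tensor\thtp$. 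The main obstacle will be the pseudo-derivation manipulation in part (1): after the initial simplifications there is an expression quadratic in $Z$ and in $m^*(\thh)$ that must be collapsed to $Z\tensor Z$ using the two pseudo-derivation identities in tandem, and this is precisely the step for which a generic chain homotopy would not suffice.
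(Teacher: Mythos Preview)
Your proposal is correct and follows essentially the same route as the paper. The paper computes $(\thfp\tensor\thfp)m^*(\tht\thh)$ directly and shows it equals $\thfp\tensor\thfp - \thf\tensor\thf$, which is trivially equivalent to your target identity $(\thfp\tensor\thfp)R = \thzr$; the expansion $\thfp = \thf + \thfp\tht\thh$, the module conditions, and the pseudo-derivation identity are used in exactly the same way. The only cosmetic differences are that the paper does not name your ``extended module conditions'' separately (it uses them implicitly), and for part~(3) the paper takes the equivalent form $\thtp = \thfp\tht\thg$ and checks $\thfp\thg = \tho$ via $\thh\thg = \thzr$, whereas you use $\thtp = \thf\tht\thgp$ and $\thf\thgp = \tho$ via $\thf\thh = \thzr$.
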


\begin{proof}
\eqref{f}: We need to verify that $m^*(\thfp) = \thfp\tensor \thfp$ under the assumption $m^*(\thf) = \thf\tensor\thf$. Observe that $\thfp = \thf + \thfp\tht\thh$. Therefore,
\begin{align*}
(\thfp \tensor \thfp )m^*(\tht \thh) & = (\thfp \tensor \thfp )(\tht \tensor \tho + \tho\tensor \tht )m^*(\thh) \\
& = (\thfp \tht  \tensor (\thf  + \thfp \tht \thh) + (\thf  + \thfp \tht \thh)\tensor \thfp \tht )m^*(\thh) \\
& = (\thfp \tht \tensor \tho)(\tho\tensor\thf)m^*(\thh) + (\tho\tensor \thfp\tht)(\thf\tensor \tho)m^*(\thh) \\
& \quad - (\thfp \tht \tensor \thfp \tht)(\thh\tensor \tho - \tho\tensor \thh)m^*(\thh) \\
& = (\thfp \tht \tensor \tho)(\thh\tensor \thf) + (\tho\tensor \thfp\tht)(\thf\tensor \thh) - (\thfp \tht\tensor \thfp \tht)(\thh \tensor \thh)\\
& = \thfp \tht\thh \tensor \thf  + \thf \tensor \thfp \tht\thh + \thfp \tht\thh\tensor \thfp \tht\thh \\
& = (\thfp - \thf)\tensor \thf + \thf\tensor (\thfp - \thf) + (\thfp - \thf)\tensor (\thfp - \thf) \\
& = \thfp \tensor \thfp  - \thf \tensor \thf.
\end{align*}
Here we have used that $\thh$ is a pseudo-derivation, that $\tht$ is a derivation and the module conditions involving $\thf$. The above gives that
$$
(\thfp \tensor \thfp )m^*(\tho-\tht\thh) = \thf \tensor \thf = m^*(\thf ) = m^*(\thfp (\tho-\tht\thh)) = m^*(\thfp )m^*(\tho-\tht\thh),
$$
and this implies that $\thfp \tensor \thfp  = m^*(\thfp)$ since $\tho-\tht\thh$ is invertible. 

\eqref{g}: This is proved as \eqref{f} but uses the module conditions involving $\thg$ instead.

\eqref{t}: Note that $\thtp = \thfp \tht\thg$. Since $\thh\thg = \thzr$ and $\thfp(\tho-\tht\thh) = \thf$, we have that $\thfp\thg = \thfp(\tho-\tht\thh)\thg = \thf\thg = \tho$. By \eqref{f}, $\thfp$ is a morphisms. Combining these facts we get that
\begin{align*}
m^*(\thtp) & = m^*(\thfp)m^*(\tht)m^*(\thg) = (\thfp\tensor \thfp)(\tht\tensor \tho + \tho \tensor \tht)(\thg\tensor \thg) \\
& = \thfp\tht\thg\tensor \thfp\thg + \thfp\thg\tensor \thfp\tht\thg = \thtp\tensor \tho + \tho\tensor \thtp,
\end{align*}
so $\thtp$ is indeed a derivation.
\end{proof}

To show that the module conditions are satisfied under the hypotheses of Theorem \ref{thm:thick perturbation}, we will introduce an auxiliary set of conditions on $\thDiag$, called the `annihilation conditions', summarized as follows: all possible ways of forming maps $m^*(\thx)(\thy\tensor \thz)$ or $(\thx\tensor \thy)m^*(\thz)$ where $\{\thh\}\subseteq \{\thx,\thy,\thz\}\subseteq \{\thf,\thg,\thh\}$ should yield the zero map.

\noindent {\bf Annihilation conditions.}
\begin{align*}
(\thh\tensor \thh)m^*(\thh) & = \thzr, & m^*(\thh)(\thh\tensor \thh) & = \thzr, \\
(\thh\tensor \thh)m^*(\thg) & = \thzr, & m^*(\thf)(\thh\tensor \thh) & = \thzr, \\
(\thf\tensor \thh)m^*(\thh) & = \thzr, & m^*(\thh)(\thg\tensor \thh) & = \thzr, \\
(\thh\tensor \thf)m^*(\thh) & = \thzr, & m^*(\thh)(\thh\tensor \thg) & = \thzr, \\
(\thf\tensor \thf)m^*(\thh) & = \thzr, & m^*(\thh)(\thg\tensor \thg) & = \thzr, \\
(\thf\tensor \thh)m^*(\thg) & = \thzr, & m^*(\thf)(\thg\tensor \thh) & = \thzr, \\
(\thh\tensor \thf)m^*(\thg) & = \thzr, & m^*(\thf)(\thh\tensor \thg) & = \thzr.
\end{align*}
The annihilation conditions, albeit outnumbering the module conditions, are easier to verify, and, getting ahead of ourselves, we will take advantage of this in proving Theorem \ref{thm:symmetric tensor trick}.

\begin{proposition} \label{prop:annihilation conditions}
\begin{enumerate}
\item The homotopy $\thh$ is a pseudo-derivation if and only if the annihilation conditions in the four first rows are satisfied. \label{pd}
\item The module conditions are equivalent to the annihilation conditions in the five last rows. \label{md}
\end{enumerate}
\end{proposition}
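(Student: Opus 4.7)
The plan is to prove both biconditionals by direct calculation, relying on the thick-contraction identities $\thf\thg = \tho$ and $\thh\thh = \thf\thh = \thh\thg = \thzr$, the morphism properties $m^*(\thf) = \thf\tensor\thf$ and $m^*(\thg) = \thg\tensor\thg$, and the compatibility $m^*(\thx\thy) = m^*(\thx)m^*(\thy)$.

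The forward directions are routine multiplication tricks. For (1), post-composing the first pseudo-derivation equation $(\thh\tensor\tho - \tho\tensor\thh)m^*(\thh) = \thh\tensor\thh$ with $m^*(\thh)$ (resp.\ $m^*(\thg)$) turns its left side into $(\thh\tensor\tho - \tho\tensor\thh)m^*(\thh\thh) = \thzr$ (resp.\ $\ldots m^*(\thh\thg) = \thzr$), yielding Row 1 (resp.\ Row 2) left; pre-composing with $\thf\tensor\tho$ or $\tho\tensor\thf$ kills the first summand via $\thf\thh = \thzr$ and gives Rows 3 and 4 left. The right column is analogous from the second pseudo-derivation equation. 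For (2), each of M1--M4 yields three annihilations similarly: e.g.\ from M1 one obtains Rows 3, 5, and 6 left by pre-composing with $\tho\tensor\thh$, with $\tho\tensor\thf$, and by post-composing with $m^*(\thg)$, respectively.

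The reverse directions hinge on the following principle, which I would first prove as an auxiliary lemma: \emph{any chain map $\phi$ out of $A^{\tensor n}$ satisfying $\phi\thg_n = \thzr$ and $\phi\thh_n = \thzr$ is itself zero}. This follows from $\tho = \thg\thf - \partial(\thh) = \thg\thf - d\thh - \thh d$: on any element $x$, $\phi(x) = \phi(\thg\thf x) - \phi(d\thh x) - \phi(\thh dx)$, and all three terms vanish (the first by $\phi\thg = \thzr$, the third by $\phi\thh = \thzr$, and the second since $\phi d\thh = \pm d\phi\thh = \thzr$ by the chain-map property). For the reverse of (2), apply this componentwise in $(p,q)$ to $\phi_1 := (\thf\tensor\tho)m^*(\thh) - \thf\tensor\thh$: a direct calculation using $\partial(\thh) = \thg\thf - \tho$ and the morphism properties of $\thf,\thg$ shows $\partial(\phi_1) = \thzr$; the vanishing $\phi_1\thg = \thzr$ is automatic from $\thh\thg = \thzr$; and $\phi_1\thh = \thzr$ follows from Row 3 left together with $\thh\thh = \thzr$. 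Thus $\phi_1 = \thzr$, which is M1; the remaining module conditions follow symmetrically from Rows 3R, 4L, 4R.

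The same argument also settles the reverse of (1), but in two stages. Since Rows 3 and 4 lie in the overlap of both parts, the argument just sketched first produces all four module conditions M1--M4 from Rows 3L, 4L, 3R, 4R. With these in hand, one verifies that $E_1 := (\thh\tensor\tho - \tho\tensor\thh)m^*(\thh) - \thh\tensor\thh$ is a chain map---the calculation of $\partial(E_1)$ collapses to zero precisely when one can rewrite $(\thg\thf\tensor\tho)m^*(\thh)$ as $\thg\thf\tensor\thh$ using M1, and similarly for M3. The vanishing $E_1\thg = \thzr$ is automatic, while $E_1\thh = \thzr$ uses Row 1 left and $\thh\thh = \thzr$, so the auxiliary lemma yields $E_1 = \thzr$, which is the first pseudo-derivation equation; the second follows analogously from the right-column annihilations. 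The main obstacle is this bootstrapping: one must promote Rows 3--4 to the full module conditions before $E_1$ can even be shown to be a chain map, so the reverse of (1) cannot be carried out in a single step. Careful tracking of Koszul signs from $|\thh| = 1$ when tensor factors commute is a secondary bookkeeping task throughout.
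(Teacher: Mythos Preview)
Your argument is correct, but it takes a different route from the paper's. The paper proceeds by directly differentiating the relevant annihilation expression: for part~\eqref{pd} it expands $\partial\big((\thh\tensor\thh)m^*(\thh)\big)$ and observes that this equals the pseudo-derivation defect $(\thh\tensor\tho - \tho\tensor\thh)m^*(\thh) - \thh\tensor\thh$ plus three terms coming from Rows~2, 3, 4 left. So once Rows~1--4 left vanish, both sides are zero and the first pseudo-derivation identity drops out in one step. Part~\eqref{md} is handled the same way, by differentiating $(\thf\tensor\thh)m^*(\thh)$. Your approach instead isolates a general ``kernel principle'' (a chain map annihilated by $\thg_n$ and by $\thh_n$ must vanish) and applies it twice: first to the module defects $\phi_i$, then to the pseudo-derivation defect $E_1$. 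This forces the bootstrapping you note for part~\eqref{pd}---you must first extract M1, M3 from Rows~3--4 before $E_1$ is seen to be a chain map---whereas the paper's single differentiation of Row~1 left avoids any detour through the module conditions. On the other hand, your method yields a sharper observation for part~\eqref{md}: each module condition follows already from a \emph{single} annihilation condition (e.g.\ M1 from Row~3 left alone), rather than from the triple the paper records; the remaining two annihilation rows are then recovered from the module condition via the forward direction.
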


\begin{proof}
\eqref{pd}: Consider the differential of the map $(\thh\tensor \thh)m^*(\thh)$:
\begin{align*}
\partial((&\thh\tensor \thh)m^*(\thh)) \\
& = ((\thg\thf-\tho)\tensor \thh)m^*(\thh) - (\thh\tensor (\thg\thf - \tho))m^*(\thh) + (\thh\tensor \thh)m^*(\thg\thf-\tho) \\
& = (\thh\tensor \tho - \tho\tensor \thh)m^*(\thh) - \thh\tensor \thh  \\
& \quad + (\thg\tensor \tho)(\thf\tensor \thh)m^*(\thh) - (\tho\tensor \thg)(\thh\tensor \thf)m^*(\thh) - (\thh\tensor \thh)m^*(\thg)m^*(\thf).
\end{align*}
From this expression, one sees that the equality $(\thh\tensor \tho - \tho\tensor \thh)m^*(\thh) = \thh\tensor \thh$ follows from the first four annihilation conditions in the left column. Conversely, these four annihilation conditions follow from $(\thh\tensor \tho - \tho\tensor \thh)m^*(\thh) = \thh\tensor \thh$:
$$(\thh\tensor \thh)m^*(\thh) = (\thh\tensor \tho - \tho \tensor \thh)m^*(\thh)m^*(\thh) = (\thh\tensor \tho - \tho \tensor \thh)m^*(\thh\thh) = \thzr,$$
and similarly $(\thh\tensor \thh)m^*(\thg) = \thzr$. Next,
\begin{align*}
(\thf\tensor \thh)m^*(\thh) & = (\thf\tensor \tho)(\tho\tensor \thh)m^*(\thh) = (\thf\tensor \tho)((\thh\tensor \tho)m^*(\thh) - \thh\tensor \thh) 
\\ & = (\thf\thh\tensor \tho)m^*(\thh) - \thf\thh\tensor\thh = \thzr,
\end{align*}
and similarly $(\thh\tensor \thf)m^*(\thh) = \thzr$. The condition $-m^*(\thh)(\thh\tensor\tho - \tho\tensor \thh) = \thh\tensor \thh$ is likewise equivalent to the first four annihilation conditions in the right column.

\eqref{md}: By the same token, each individual module condition is equivalent to three annihilation conditions. The module condition $(\thf\tensor \tho)m^*(\thh) = \thf\tensor \thh$ is equivalent to the three annihilation conditions
$$(\thf\tensor \thh)m^*(\thh) = \thzr, \quad (\thf\tensor \thh)m^*(\thg) = \thzr,\quad (\thf\tensor \thf)m^*(\thh) = \thzr.$$
The proof is similar to the proof of \eqref{pd} and is left to the reader. One direction is seen by differentiating the expression $(\thf\tensor \thh)m^*(\thh)$. After doing the same thing for each module condition, one sees that they are collectively equivalent to the annihilation conditions in the five last rows.
\end{proof}

As promised, we can now prove the following:
\begin{proposition} \label{prop:subsume}
If $\thf$ and $\thg$ are morphisms and $\thh$ is a pseudo-derivation then all annihilation conditions are satisfied, and hence the module conditions are automatically satisfied.
\end{proposition}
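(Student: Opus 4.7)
My plan is to split the fourteen annihilation conditions into two groups. The first four rows involve only $\thh$, and by Proposition~\ref{prop:annihilation conditions}\eqref{pd} they are already equivalent to the pseudo-derivation property, hence hold by hypothesis. So the task reduces to verifying rows 5, 6, 7; once that is done, Proposition~\ref{prop:annihilation conditions}\eqref{md} will deliver the module conditions as an immediate corollary.

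For the remaining rows I would exploit two elementary facts about the operation $m^*$. It is multiplicative, $m^*(\thg\thf) = m^*(\thg)m^*(\thf)$, since both sides have $(p,q)$-component $\thg_{p+q}\thf_{p+q}$; and $m^*(\thf) = \thf\tensor\thf$ whenever $\thf$ is a morphism (likewise for $\thg$). Using these, row 5 left collapses to
$$(\thf\tensor\thf)m^*(\thh) = m^*(\thf)m^*(\thh) = m^*(\thf\thh) = m^*(\thzr) = \thzr,$$
via the contraction annihilation $\thf\thh = \thzr$; row 5 right is symmetric, using $m^*(\thg) = \thg\tensor\thg$ and $\thh\thg = \thzr$.

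For rows 6 and 7 the morphism property lets me expand $m^*(\thg) = \thg\tensor\thg$ (or $m^*(\thf) = \thf\tensor\thf$), so that every composition factors as a tensor product of two ordinary compositions in $\Cat$, at least one of which is one of the contraction annihilations $\thf\thh$ or $\thh\thg$. A representative example is row 6 left: $(\thf_p\tensor\thh_q)\thg_{p+q} = (\thf_p\tensor\thh_q)(\thg_p\tensor\thg_q) = \pm(\thf_p\thg_p)\tensor(\thh_q\thg_q) = \thzr$, since $\thh\thg = \thzr$. The other five entries in rows 6 and 7 follow the identical pattern, swapping the roles of $\thf$ and $\thg$ or interchanging which slot carries $\thh$.

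In short, there is no serious obstacle: once the morphism hypothesis is used to break apart $m^*(\thf)$ and $m^*(\thg)$, every compound annihilation in rows 5--7 reduces to a factorwise vanishing supplied by the underlying contraction, while rows 1--4 are nothing but a repackaging of the pseudo-derivation identity itself. The module conditions then come for free from Proposition~\ref{prop:annihilation conditions}\eqref{md}.
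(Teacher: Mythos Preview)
Your proposal is correct and follows essentially the same approach as the paper: invoke Proposition~\ref{prop:annihilation conditions}\eqref{pd} for the first four rows, then use the morphism identities $m^*(\thf)=\thf\tensor\thf$, $m^*(\thg)=\thg\tensor\thg$ together with the contraction annihilations $\thf\thh=\thzr$, $\thh\thg=\thzr$ to kill rows 5--7, and conclude with Proposition~\ref{prop:annihilation conditions}\eqref{md}. The paper's proof is the same argument, with the same two representative computations you give.
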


\begin{proof}
By Proposition \ref{prop:annihilation conditions} \eqref{pd}, if $\thh$ is a pseudo-derivation then the annihilation conditions in the four first rows are satisfied. If $\thf$ and $\thg$ are morphisms, then the annihilation conditions in the three remaining rows follow from the conditions $\thf\thh = \thzr$ and $\thh\thg = \thzr$:
$$(\thf\tensor \thf)m^*(\thh) = m^*(\thf)m^*(\thh) = m^*(\thf\thh)  = \thzr,$$
$$(\thf\tensor \thh)m^*(\thg) = (\thf\tensor \thh)(\thg\tensor \thg) = \thf\thg\tensor \thh\thg = \thzr,$$
and so on. That the module conditions hold then follows from Proposition \ref{prop:annihilation conditions} \eqref{md}.
\end{proof}

\begin{proof}[Proof of Theorem \ref{thm:thick perturbation}]
By Proposition \ref{prop:subsume}, the module conditions are satisfied, so by Proposition \ref{prop:stability}, $\thfp$ and $\thgp$ are morphisms, $\thhp$ is a pseudo-derivation and $\thtp$ is a derivation. We need  to show that $t = \tht_1$ and $t' = \tht_1'$ are perturbations of $A$ and $B$, respectively, and that $\thDiag^\tht$ is a thick contraction. The $n^{th}$ level of the diagram $\thDiag^\tht$ is equal to the diagram
$$\Diag_n^{\tht_n}\colon \bigSDR{(A^{\tensor n})^{\tht_n}}{(B^{\tensor n})^{\tht_n'}}{\thf_n'}{\thg_n'}{\thh_n'}$$
obtained by perturbing the $n^{th}$ level $\Diag_n$ of the thick contraction $\thDiag$ using the perturbation $\tht_n$ of $A^{\tensor n}$. By the Basic Perturbation Lemma, $\tht_n'$ is a perturbation of $B^{\tensor n}$ and $\Diag_n^{\tht_n}$ is a contraction. In particular, $t$ and $t'$ are perturbations of $A$ and $B$, respectively. Furthermore, the relations $\thfp\thgp = \tho$, $\thfp\thhp = \thzr$, $\thhp\thhp = \thzr$ and $\thhp\thgp = \thzr$ hold because they do so levelwise. However, to verify that $\thDiag^{\tht}$ is a thick contraction, it is not enough to know that each individual level is a contraction, we will also need the fact that $\tht$ and $\thtp$ are derivations. Observe that
$$\partial(\thhp)_n = d_{(A^t)^{\tensor n}} \thh_n' + \thh_n' d_{(A^t)^{\tensor n}}.$$
Since $\tht$ is a derivation, the tensor product differential $d_{(A^t)^{\tensor n}}$ in $(A^t)^{\tensor n}$ coincides with the perturbed differential $d_{A^{\tensor n}} + \tht_n$ of $(A^{\tensor n})^{\tht_n}$. Since each $\Diag_n^{\tht_n}$ is a contraction, this implies that $\partial(\thhp) = \thgp\thfp - \tho$. Similarly, using that also $\thtp$ is a derivation one verifies that $\partial(\thfp) = \thzr$ and that $\partial(\thgp) = \thzr$. This finishes the proof.
\end{proof}

\begin{remark}
The reason for the name `module conditions' is the following:
Suppose that $A$ and $B$ are associative algebras and that $\thg_1\colon B\rightarrow A$ is a morphism of algebras. Then $A$ can be viewed as a left $B$-module via $\mu_A(\thg_1\tensor 1)\colon B\tensor A\rightarrow A$. Suppose moreover that $\mu_A\thh_2 = \thh_1\mu_A$. Then the module condition $\thh_2(\thg_1\tensor 1) = \thg_1\tensor \thh_1$ implies that $\thh_1$ is a morphism of $B$-modules (of degree 1). The other module conditions have similar interpretations.
\end{remark}

\section{Symmetric tensor trick} \label{section:symmetric tensor trick}
By Proposition \ref{prop:algebra contraction} any contraction $\Diag$ can be extended to a thick contraction $\thDiag$ where $\thf$ and $\thg$ are morphisms and $\thh$ is a $(\tho,\thg\thf)$-derivation. In this section we will symmetrize $\thh$ to obtain a \emph{symmetric} thick contraction $\thDiag^\Symgpd$ which extends $\Diag$. The symmetrized homotopy $\sh$ is no longer a $(\tho,\thg\thf)$-derivation, but we will show that it is a pseudo-derivation. Throughout this section we will assume that $\rationals\subseteq \kk$. This assumption is necessary, see Proposition \ref{prop:sym}.
\vskip8pt
Fix a contraction $\Diag$, and consider its extension to a thick contraction $\thDiag$ given by Proposition \ref{prop:algebra contraction}:
$$\thf_n = f^{\tensor n},\quad \thg_n = g^{\tensor n},\quad \thh_n = \sum_{i+1+j=n} 1^{\tensor i}\tensor h \tensor \pi^{\tensor j}.$$
Here $\pi = gf$. Evidently, the thick maps $\thf$ and $\thg$ are symmetric, but $\thh$ is not.
\begin{definition} \label{def:symmetrized homotopy}
The \defn{symmetrized tensor trick homotopy} $\sh\colon A\rightarrow A$ is the thick map defined by
$$\shn{n} = \frac{1}{n!}\sum_{\sigma\in \Symgp{n}} \thh_n^\sigma,$$
where $\thh_n^\sigma = \sigma^{-1} \thh_n \sigma$.
\end{definition}
The idea of symmetrizing the tensor trick homotopy appears in \cite{GLS1,HS,Huebschmann-Lie,Huebschmann-sh} and presumably in many other places, but the author is not aware of any written source where the formal properties of the symmetrized homotopy are worked out in detail. In particular, we believe that the discovery that $\sh$ is a pseudo-derivation is new, see Theorem \ref{thm:symmetric tensor trick} below.
\begin{proposition} \label{prop:sh-decomposition}
The symmetrized homotopy $\sh \colon A\rightarrow A$ can be decomposed as
$$\sh = \Qb \hh = \hh \Qb$$
where $\hh$ and $\Qb$ are the symmetric thick maps from $A$ to itself given by
$$\hhn{n} = \sum_{i+1+j = n} 1^{\tensor i}\tensor h \tensor 1^{\tensor j}$$
and
\begin{equation*}
\Qn{n} = \sum_{\epsilon\in \{0,1\}^n} \coeff{n}{|\epsilon|} \pi^{\epsilon_1} \tensor \ldots \tensor \pi^{\epsilon_n}.
\end{equation*}
Here, $\pi = gf$, $|\epsilon| = \epsilon_1 +\ldots + \epsilon_n$, and
$$\coeff{n}{k} = \frac{k!(n-1-k)!}{n!}$$
if $k<n$. We define $\coeff{n}{n} = 0$.
\end{proposition}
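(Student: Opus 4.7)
The plan is to unfold both sides of the desired identity in terms of elementary tensor-product operators on $A^{\tensor n}$ and match coefficients by a direct combinatorial count. The key observation that ties everything together is the symmetry relation $\coeff{n}{m}=\coeff{n}{n-1-m}$ for $0\leq m\leq n-1$, which is immediate from the definition.

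First I would analyze $\shn{n}$ directly. For $\sigma\in\Symgp{n}$ the operator $\sigma^{-1}(1^{\tensor i}\tensor h\tensor \pi^{\tensor j})\sigma$ is the tensor-product operator whose $k$-th factor is $h$ when $k=\sigma(i+1)$, is $1$ when $k\in\sigma(\{1,\ldots,i\})$, and is $\pi$ when $k\in\sigma(\{i+2,\ldots,n\})$. Let $T_{k,S}$ denote the tensor-product operator on $A^{\tensor n}$ whose $k$-th factor is $h$, whose factors at positions in $S\subseteq\{1,\ldots,n\}\setminus\{k\}$ are $1$, and whose remaining factors are $\pi$. For fixed $(k,S)$ the index $i=|S|$ is forced, after which $\sigma$ ranges freely over permutations of three blocks of sizes $|S|$, $1$, $n-1-|S|$, yielding $|S|!\,(n-1-|S|)!$ choices. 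Averaging over $\sigma$ one obtains
$$\shn{n}=\sum_{k=1}^{n}\,\sum_{S\subseteq\{1,\ldots,n\}\setminus\{k\}}\coeff{n}{|S|}\,T_{k,S}.$$

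Next I would compute $\Qn{n}\hhn{n}$ by expanding and composing factor by factor, which gives
$$\Qn{n}\hhn{n}=\sum_{k=1}^{n}\,\sum_{\epsilon}\coeff{n}{|\epsilon|}\,\pi^{\epsilon_1}\tensor\ldots\tensor(\pi^{\epsilon_k}h)\tensor\ldots\tensor\pi^{\epsilon_n}.$$
The annihilation condition $fh=\thzr$ gives $\pi h=gfh=0$, so only the summands with $\epsilon_k=0$ survive. Setting $S=\{j\neq k:\epsilon_j=0\}$ defines a bijection between the surviving pairs $(k,\epsilon)$ and the pairs $(k,S)$, under which the surviving summand becomes $T_{k,S}$ with coefficient $\coeff{n}{|\epsilon|}=\coeff{n}{n-1-|S|}=\coeff{n}{|S|}$, where the last equality uses the symmetry of $\coeff{n}{\cdot}$. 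This reproduces exactly the formula for $\shn{n}$ derived above, so $\sh=\Qb\hh$.

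The identity $\sh=\hh\Qb$ follows by the same argument: composition in the opposite order gives summands with $h\pi^{\epsilon_k}$ in the $k$-th slot, and the annihilation condition $hg=\thzr$ now gives $h\pi=hgf=0$, again forcing $\epsilon_k=0$. The rest of the bookkeeping is identical. The main (and essentially only) obstacle is combinatorial: recognizing that all three sums are indexed by the same data $(k,S)$ after applying the annihilation relations, and that the three coefficient functions agree because of the symmetry $\coeff{n}{m}=\coeff{n}{n-1-m}$. The boundary case $|\epsilon|=n$ in $\Qn{n}$, which is assigned coefficient $\coeff{n}{n}=0$ by convention, never contributes anyway since $\epsilon_k=1$ is always killed by $\pi h=0$ or $h\pi=0$, so no separate argument is needed there.
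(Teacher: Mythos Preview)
Your proof is correct and follows essentially the same approach as the paper: expand $\shn{n}$ as a linear combination of elementary tensor operators $T_{k,S}$, count how many permutations contribute to each one, and then use $\pi h = h\pi = 0$ to match this with $\Qb\hh$ and $\hh\Qb$. The only cosmetic difference is that you index by the set $S$ of positions carrying $1$ and obtain coefficient $\coeff{n}{|S|}$, whereas the paper indexes by the positions carrying $\pi$ and obtains $\coeff{n}{|\epsilon|}$; your explicit use of the symmetry $\coeff{n}{m}=\coeff{n}{n-1-m}$ reconciles the two, and the paper does not need to invoke it because its indexing already matches that of $\Qb$.
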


\begin{proof}
The $n^{th}$ component of the symmetrized homotopy is given by the formula
$$\shn{n} = \frac{1}{n!}\sum_{\sigma\in\Symgp{n}} \thh_n^\sigma,$$
where $\thh_n^\sigma =  \sigma^{-1} \thh_n \sigma$.
Every $\sigma\in \Symgp{n}$ determines a total order $<_\sigma$ of $\{1,\ldots,n\}$ by
$$i<_\sigma j \Longleftrightarrow \sigma(i)<\sigma(j).$$
We have that
$$\thh_n^\sigma = \sum_{j=1}^n \alpha_1\tensor \ldots \tensor \alpha_{j-1} \tensor h \tensor \alpha_{j+1} \tensor \ldots \tensor \alpha_n,$$
where, for $i\ne j$,
$$\alpha_i = \left\{ \begin{array}{ll} 1 & \mbox{if $i<_\sigma j$} \\ \pi & \mbox{if $j<_\sigma i$} \end{array} \right.$$
Therefore, the sum of all $\thh_n^\sigma$ is a linear combination of terms of the form
$$\pi^{\epsilon_1}\tensor \ldots \tensor \pi^{\epsilon_{j-1}} \tensor h \tensor \pi^{\epsilon_{j+1}} \tensor \ldots \tensor \pi^{\epsilon_n},$$
where $\epsilon_i\in\{0,1\}$. The coefficient of such a term is the number of total orders on the set $\{1,\ldots,n\}$ with the property that $j$ is the $j^{th}$ element and all elements of the set $\set{i}{\epsilon_i=0}$ precedes all elements of the set $\set{i}{\epsilon_i=1}$. The number of such orders is $k!(n-1-k)!$, where $k = |\set{i}{\epsilon_i=1}| = |\epsilon|$. Hence,
$$\shn{n} = \sum_{j=1}^n \sum_{\substack{\epsilon\in \{0,1\}^n \\ \epsilon_j = 0}} \coeff{n}{|\epsilon|} \pi^{\epsilon_1}\tensor \ldots \tensor \pi^{\epsilon_{j-1}} \tensor h \tensor \pi^{\epsilon_{j+1}} \tensor \ldots \tensor \pi^{\epsilon_n}.$$
Since $h\pi = \pi h = 0$, this may be written as $\sh = \hh \Qb  = \Qb \hh$, as claimed.
\end{proof}

\begin{remark}
Observe that since $h\pi = \pi h = 0$ it does not matter how $\coeff{n}{n}$ is defined, but we define it to be zero for definiteness.
\end{remark}

\begin{theorem} \label{thm:symmetric tensor trick}
The diagram
$$\thDiag^\Sigma\colon \SDR{A}{B}{\thf}{\thg}{\sh}$$
is a symmetric thick contraction which extends $\Diag$. Furthermore, $\thf$ and $\thg$ are morphisms and $\sh$ is a pseudo-derivation.
\end{theorem}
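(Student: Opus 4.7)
The plan is to verify the four assertions in order: extension of $\Diag$, symmetry, the thick contraction axioms, and the pseudo-derivation property.

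The extension and symmetry claims are immediate. Since $\Symgp{1}$ is trivial we have $\shn{1} = \thh_1 = h$, so $\thDiag^\Sigma$ extends $\Diag$, and $\sh$ is symmetric by construction as a $\Symgp{n}$-average. The claim that $\thf$ and $\thg$ are morphisms was already settled in Proposition \ref{prop:algebra contraction}.

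For the thick contraction axioms, the identities $\partial(\thf) = \partial(\thg) = \thzr$ and $\thf\thg = \tho$ are inherited from $\Diag$. For $\partial(\sh) = \thg\thf - \tho$, I would observe that the thick map $\thg\thf - \tho$ has $n$-th level $\pi^{\tensor n} - 1^{\tensor n}$, which is $\Symgp{n}$-equivariant; thus $\partial(\thh_n^\sigma) = \sigma^{-1}\partial(\thh_n)\sigma = (\thg\thf - \tho)_n$ for every $\sigma \in \Symgp{n}$, and averaging yields the identity for $\sh$. For the annihilations $\thf\sh = \sh\thg = \sh\sh = \thzr$ I would exploit the decomposition $\sh = \hh\Qb = \Qb\hh$ from Proposition \ref{prop:sh-decomposition}: $\thf\sh = \thf\hh\Qb = \thzr$ since $\thf_n\hh_n = \sum_i f^{\tensor i}\tensor fh\tensor f^{\tensor n-1-i}$ vanishes by $fh = 0$; dually $\sh\thg = \thzr$ using $hg=0$; and $\sh\sh = \Qb\hh\hh\Qb = \thzr$, where $\hh_n\hh_n = 0$ follows from $h^2 = 0$ together with the Koszul sign cancellation of the pair of cross terms placing $h$ at two distinct positions.

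For the pseudo-derivation property I would invoke Proposition \ref{prop:annihilation conditions}\eqref{pd}, reducing the problem to the eight annihilation conditions of its first four rows with $\sh$ in place of $\thh$. The two conditions $(\sh\tensor\sh)m^*(\thg) = \thzr$ and $m^*(\thf)(\sh\tensor\sh) = \thzr$ follow at once from $\thg$ and $\thf$ being morphisms combined with $\sh\thg = \thzr$ and $\thf\sh = \thzr$. The remaining six conditions require genuine combinatorial work and constitute the main obstacle. My strategy would be to expand $\sh_{p+q} = \Qn{p+q}\hh_{p+q}$, use the splitting $\hh_{p+q} = \hh_p\tensor\tho + \tho\tensor\hh_q$, and analyze the resulting configurations: each surviving term places one or two $h$'s with $\pi$'s or identities elsewhere, and the vanishings $h^2 = 0$, $h\pi = 0$, $\pi h = 0$, $fh = 0$, $hg = 0$ eliminate most configurations while the remaining pairs cancel via the Koszul sign rule. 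The delicate point is the coefficient bookkeeping: the weights $\coeff{n}{k} = k!(n-1-k)!/n!$ appearing in $\Qn{n}$ for $n \in \{p,q,p+q\}$ must conspire to make everything cancel, and verifying this compatibility across different arities is where the bulk of the labor lies.
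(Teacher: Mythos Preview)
Your argument is correct and matches the paper through the contraction axioms. For the pseudo-derivation step you correctly reduce to the annihilation conditions of Proposition~\ref{prop:annihilation conditions}\eqref{pd} and correctly invoke the derivation splitting $m^*(\hh) = \hh\tensor\tho + \tho\tensor\hh$, but you then anticipate needing explicit coefficient bookkeeping on the $\coeff{n}{k}$. The paper avoids this entirely: by also writing the \emph{other} occurrences of $\sh$ as $\Qb\hh$ or $\hh\Qb$ and treating $\Qb$ as an opaque operator throughout, every annihilation condition collapses purely via $\thf\hh = \hh\thg = \hh\hh = \thzr$. For example,
\[
(\thf\tensor\sh)m^*(\sh) = (\thf\tensor\Qb\hh)m^*(\hh)m^*(\Qb) = (\thf\tensor\Qb\hh)(\hh\tensor\tho + \tho\tensor\hh)m^*(\Qb) = (-\thf\hh\tensor\Qb\hh + \thf\tensor\Qb\hh\hh)m^*(\Qb) = \thzr,
\]
with no coefficients ever appearing. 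The remark following the paper's proof explicitly acknowledges that a direct verification via identities among the $\coeff{n}{k}$ is possible but more involved, and that the route through the annihilation conditions sidesteps it; your strategy would succeed, but the paper's is considerably shorter.
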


\begin{proof}
The relation $\partial(\sh) = \thg\thf-\tho$ follows from the relation $\partial(\thh) = \thg\thf - \tho$ because symmetrization is a morphism of chain complexes
$$\Hom_\Cat(A^{\tensor n},A^{\tensor n})\rightarrow \Hom_\Cat(A^{\tensor n},A^{\tensor n})^{\Symgp{n}}.$$
and because the thick map $\thg\thf-\tho$ is symmetric. The relation $\thf\thg=\tho$ is clear. By Proposition \ref{prop:sh-decomposition} we have $\sh = \Qb \hh = \hh \Qb$. Since $fh = 0$, $hg = 0$ and $hh = 0$, it follows that $\thf\hh = \thzr$, $\hh \thg = \thzr$ and $\hh\hh = \thzr$. Therefore,  $\thf\sh = \thf \hh \Qb = \thzr$, $\sh \thg = \Qb \hh \thg = \thzr$ and $\sh\sh = \Qb\hh \hh \Qb = \thzr$. We have thus verified that $\thDiag^\Sigma$ is a contraction.

The maps $\thf$ and $\thg$ are by definition the morphisms that extend $f$ and $g$. To prove that $\sh$ is a pseudo-derivation, it suffices by Proposition \ref{prop:annihilation conditions} to verify the annihilation conditions. To do this, use the decomposition $\sh = \Qb \hh = \hh \Qb$ and the fact that $\hh$ is a derivation that annihilates $\thf$, $\thg$ and $\hh$. For instance,
\begin{align*}
(\thf\tensor \sh) m^*(\sh)
& = (\thf\tensor \Qb \hh)m^*(\hh) m^*(\Qb) \\
& = (\thf\tensor \Qb\hh)(\hh \tensor \tho + \tho \tensor \hh) m^*(\Qb) \\
& = (-\thf\hh \tensor \Qb\hh + \thf\tensor \Qb\hh \hh)m^*(\Qb) \\
& = \thzr.
\end{align*}
The other annihilation conditions are verified in a similar manner.
\end{proof}

\begin{remark}
We have proved that $\sh$ is a pseudo-derivation and that $\thDiag^\Sigma$ satisfies the module conditions via Proposition \ref{prop:annihilation conditions} by verifying the annihilation conditions. The module conditions can also be verified directly. These verifications boil down to statements about the coefficients $\coeff{n}{k}$. For example, in proving that
$$(\thf\tensor \tho)m^*(\sh) = \thf\tensor \sh,$$
one comes across the statement that the equality
$$\sum_{j=0}^r \binom{r}{j} \coeff{n}{j+k} = \coeff{n-r}{k}$$
holds for all non-negative integers $r,k,n$ with $r+k<n$. Verifying directly that $\sh$ is a pseudo-derivation involves a similar but more complicated equality. It is quite interesting that these combinatorial equalities are consequences of Proposition \ref{prop:annihilation conditions}.
\end{remark}

\section{Operads and cooperads} \label{section:operads}
For the convenience of the reader we have included this section with standard definitions and facts about operads and cooperads. Most things in this section can be found in \cite{Fresse}, and the reader familiar with operads can safely skip this section, referring back for notation if necessary.

A \defn{symmetric sequence} is a collection $\Op = \{\Op(n)\}_{n\geq 0}$ where $\Op(n)$ is a chain complex with a right action of the symmetric group $\Symgp{n}$. The \defn{Schur functor} associated to a symmetric sequence $\Op$ is the functor $\Op[\dash]$ from the category of chain complexes to itself given on objects by
$$\Op[A] = \bigoplus_{n\geq 0} \Op(n)\tensor_{\Symgp{n}} A^{\tensor n},$$
and on morphisms $f\colon A\rightarrow B$ by
$$\Op[f] = \bigoplus_{n\geq 0} 1\tensor_\Symgp{n} f^{\tensor n}\colon \Op[A]\rightarrow \Op[B],$$
see \cite[\S 2.1.1]{Fresse}. There is a parallel story for \emph{non-symmetric} operads and cooperads; here one considers sequences $\Op = \{\Op(n)\}_{\geq 0}$ where $\Op(n)$ is just a chain complex without any $\Sigma_n$-action. In this case, one sets $\Op[A] =  \bigoplus_{n\geq 0} \Op(n)\tensor A^{\tensor n}$. All the constructions in this section have obvious non-symmetric analogs.

The \defn{tensor product} of two symmetric sequences $\Op$ and $\OpP$ is the symmetric sequence $\Op\tensor \OpP$ given by
$$(\Op \tensor \OpP)(n) = \bigoplus_{p+q=n} \Ind_{\Symgp{p}\times \Symgp{q}}^\Symgp{n} \Op(p)\tensor \OpP(q).$$
Here $\Ind_{\Symgp{p}\times \Symgp{q}}^\Symgp{n} \Op(p)\tensor \OpP(q)$ denotes the induced $\Symgp{n}$-representation. This tensor product has the property that there is an isomorphism of functors from $\Cat$ to itself
$$(\Op\tensor \OpP)[\dash] \cong \Op[\dash]\tensor \OpP[\dash],$$
and it makes the category of symmetric sequences into a symmetric monoidal dg-category, see \cite[\S 2.1]{Fresse}.

The \defn{composition product} of two symmetric sequences $\Op$ and $\OpP$ is the symmetric sequence
$$\Op\circ \OpP = \bigoplus_{n\geq 0} \Op(n) \tensor_\Symgp{n} \OpP^{\tensor n}.$$
The composition product has the property that there is an isomorphism of functors from $\Cat$ to itself
$$(\Op\circ \OpP)[\dash] \cong \Op[\OpP[\dash]],$$
and it makes the category of symmetric sequences into a monoidal category, see \cite[\S 2.2]{Fresse}. The unit object for the composition product is the symmetric sequence $\OpI$ with $\OpI(1) = \kk$ and $\OpI(n) = 0$ for $n\neq 1$. Concretely, elements of $(\Op\circ \OpP)(n)$ are linear combinations of formal composites
$$\nu\circ (\nu_1\tensor \ldots \tensor \nu_r) \sigma,$$
where
$$\nu\in \Op(r),\quad \nu_1\in \OpP(a_1),\ldots, \nu_r\in \OpP(a_r),\quad \sigma\in \Symgp{n},\quad a_1+\ldots+a_r = n,$$
These formal composites are subject to $\kk$-linearity in each variable and the equivariance conditions
\begin{align}
(\nu \tau)\circ (\nu_1\tensor \ldots\tensor \nu_r) & = \nu\circ (\nu_{\tau^{-1}(1)}\tensor \ldots \tensor \nu_{\tau^{-1}(r)}) \tau_{i_1,\ldots,i_r}, \\
\nu \circ (\nu_1 \tau_1 \tensor \ldots \tensor \nu_r \tau_r) & = \nu\circ (\nu_1\tensor \ldots \tensor \nu_r) \tau_1\sqcup \ldots \sqcup \tau_r.
\end{align}
Here $\tau_{i_1,\ldots,i_r}\in \Symgp{n}$ is the block permutation whose action is given by first dividing $\{1,2,\ldots,n\}$ into $r$ blocks of sizes $i_1,\ldots,i_r$ and then permuting the blocks according to $\tau\in \Symgp{r}$. If $\tau_j\in\Symgp{i_j}$ for $j=1,\ldots,r$, then $\tau_1\sqcup\ldots\sqcup \tau_r\in\Symgp{n}$ denotes the permutation which permutes the elements within the $j^{th}$ block according to $\tau_j$. The right action of $\Symgp{n}$ is given by formally multiplying to the right.

An \defn{operad} is a monoid in the monoidal category of symmetric sequences with the composition product, i.e., a symmetric sequence $\Op$ together with a multiplication $\gamma\colon \Op\circ \Op \rightarrow \Op$ and a unit $\eta\colon\OpI\rightarrow \Op$ satisfying associativity and unit axioms, see \cite[\S 3.1]{Fresse}. If $\Op$ is an operad then the associated Schur functor $\Op[\dash]$ becomes a monad \cite[Chapter VI]{MacLane}, and an \defn{algebra over $\Op$} is an algebra over the monad $\Op[\dash]$, i.e., an object $A$ together with a morphism $\gamma_A\colon \Op[A]\rightarrow A$ satisfying a unit and an associativity constraint, see \cite[\S 3.2]{Fresse}, \cite[p.\ 140]{MacLane}. An $\Op$-algebra structure on $A$ can equivalently be defined as a morphism of operads $\Op\rightarrow \End{A}$. The image of $\mu\in\Op(n)$ in $\End{A}(n) = \Hom_\kk(A^{\tensor n},A)$ is an operation $\mu_A\colon A^{\tensor n}\rightarrow A$.

A \defn{cooperad} $\Coop$ is a comonoid in the monoidal category of symmetric sequences with the composition product, i.e., a symmetric sequence $\Coop$ together with a coproduct $\Delta\colon \Coop\rightarrow \Coop\circ \Coop$ and a counit $\epsilon\colon \Coop\rightarrow \OpI$ satisfying coassociativity and counit axioms, see \cite[\S 1.2.17]{Fresse3},\cite[\S 1.7]{GJ} or \cite[\S 4.7]{LV}. If $\Coop$ is a cooperad, then the associated Schur functor $\Coop[\dash]$ becomes a comonad \cite[p.\ 139]{MacLane}, and a \defn{$\Coop$-coalgebra} is a coalgebra over this comonad, i.e., an object $A$ together with a morphism $\Delta_A\colon A\rightarrow \Coop[A]$ satisfying a coassociativity constraint, see \cite[\S 1.2.17]{Fresse3},\cite[\S 1.7]{GJ} or \cite[\S 4.7.4]{LV}.

Let $\Coop$ be a \emph{connected cooperad}, i.e., a cooperad satisfying $\Coop(0) = 0$ and $\Coop(1) = \kk$. By the description of elements of a composition product above and by the counit axiom for $\Coop$ we may write $\Delta(\nu)\in (\Coop\circ \Coop)(n)$ as
$$\Delta(\nu) = \nu\circ 1^{\tensor n} + 1 \circ \nu + \sum_{q=1}^p \nu^q \circ (\nu_1^q\tensor\ldots \tensor \nu_{r_q}^q)\sigma_q$$
for some $\nu^q\in \Coop(r_q)$, $\nu_i^q\in \Coop(a_i^q)$ and $\sigma_q\in \Sigma_n$, where $\sum_i a_i^q = n$, $2\leq r_q \leq n-1$, $1\leq a_i^q \leq n-1$ and $a_i^q>1$ for at least one $i$. We will sometimes use the shorter notation
$$\Delta(\nu) = \sum_{q=0}^{p+1} \nu_q'\circ \nu_q''$$
where $\nu_q'' = (\nu_1^q\tensor \ldots \tensor \nu_{r_q}^q)\sigma_q\in \Coop^{\tensor r_q}(n)$ for $0<q<p+1$ and where we let the $0$th and $(p+1)$st terms be $\nu\circ 1^{\tensor n}$ and $1 \circ \nu$, respectively.

Let $\Delta_{(1)}$ be the \emph{quadratic part} of $\Delta(\nu)$, by which we mean the sum of the terms in $\Delta(\nu)$ with $a_i^q>1$ for \emph{exactly one} $i$. This may be written in the form
$$\Delta_{(1)}(\nu) = \sum_{i=1}^u (\nu_i'\circ_{e_i} \nu_i'')\tau_i$$
for $\nu_i'\in \Coop(a_i')$, $\nu_i''\in\Coop(a_i'')$ and $\tau_i\in\Sigma_n$, where
$$\nu_i'\circ_{e_i} \nu_i'' = \nu_i'\circ (1^{\tensor e_i-1}\tensor \nu_i''\tensor 1^{\tensor a_i'-e_i}).$$

\section{Perturbation lemma for algebras over operads} \label{section:PL}
\begin{definition}
Let $\Op$ be an operad and let $A$, $B$ be $\Op$-algebras. We define a \defn{thick map of $\Op$-algebras} to be a symmetric thick map $\thf\colon A\rightarrow B$ such that the diagram
$$\xymatrix{\Op[A] \ar[d]^-{\gamma_A} \ar[r]^-{\Op[\thf]} & \Op[B] \ar[d]^-{\gamma_B} \\ A \ar[r]^-{\thf_1} & B }$$
commutes, where the upper horizontal map is given by
$$\Op[\thf] = \bigoplus_{n\geq 0} 1\tensor_\Symgp{n} \thf_n\colon \Op[A]\rightarrow \Op[B].$$
\end{definition}
In more elementary terms, a thick map of $\Op$-algebras $\thf\colon A\rightarrow B$ is a sequence
$$\thf  = \{\thf_n\colon A^{\tensor n}\rightarrow B^{\tensor n}\}_{n\geq 0}$$
of $\Symgp{n}$-equivariant maps of the same degree $|\thf|$ such that
$$\thf_1\mu_A = (-1)^{|\mu||\thf|}\mu_B \thf_n$$
for every $\mu\in \Op(n)$.

Thick maps of $\Op$-algebras simultaneously generalize morphisms and derivations (\cite[Definition 2.5]{GJ},\cite[\S 5.3.8]{LV}) of $\Op$-algebras. $\Op$-algebras together with thick maps of $\Op$-algebras form a dg-category that contains the ordinary category of of $\Op$-algebras as a subcategory.
\begin{proposition} \label{prop:TO}
Let $A$, $B$, $C$ be $\Op$-algebras.
\begin{itemize}
\item If $\thf,\thg\colon A\rightarrow B$ and $\thh\colon B\rightarrow C$ are thick maps of $\Op$-algebras, then so are
$\thh\circ \thf$, $\partial(\thf)$ and $a\thf + b\thg$, for $a,b\in \kk$.
In other words, $\Op$-algebras and thick maps of $\Op$-algebras form a dg-subcategory $T_\Op(\Cat)$ of the dg-category $T_\Symgpd(\Cat)$ of chain complexes and symmetric thick maps.

\item Morphisms of $\Op$-algebras $f\colon A\rightarrow B$ may be identified with thick maps of $\Op$-algebras $\thf\colon A\rightarrow B$ that satisfy $\thf_{p+q} = \thf_p\tensor \thf_q$ for all $p,q$.

\item Derivations of $\Op$-algebras $d\colon A\rightarrow A$ may be identified with thick maps of $\Op$-algebras $\thd\colon A\rightarrow A$ that satisfy $\thd_{p+q} = \thd_p\tensor\tho + \tho\tensor \thd_q$ for all $p,q$.
\end{itemize}
\end{proposition}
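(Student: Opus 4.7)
The plan is to verify each of the three bullets directly from the defining relation
\[
\thf_1\mu_A = (-1)^{|\thf||\mu|}\mu_B\thf_n \qquad (\mu\in\Op(n)).
\]

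For the first bullet, closure under $\kk$-linear combinations is immediate, and closure under composition will be a one-line sign check: applying the defining relation for $\thf$ and then for $\thh$ yields the combined sign $(-1)^{|\mu|(|\thf|+|\thh|)} = (-1)^{|\mu||\thh\thf|}$. The main step is closure under the differential $\partial$. The essential input will be that the structure map $\Op\to\End{A}$ of any $\Op$-algebra $A$ is a morphism of \emph{dg}-operads, giving
\[
d_A\,\mu_A = (d\mu)_A + (-1)^{|\mu|}\mu_A\,d_{A^{\tensor n}}
\]
in $\End{A}(n)$, and similarly for $B$. Expanding
\[
\partial(\thf)_1\mu_A = d_B\thf_1\mu_A - (-1)^{|\thf|}\thf_1 d_A\mu_A
\]
and applying the defining thick-map relation both to $\mu$ and to $d\mu\in\Op(n)$ (which has degree $|\mu|-1$), the two contributions of the form $d_B\mu_B\thf_n$ will cancel, leaving
\[
\partial(\thf)_1\mu_A = (-1)^{|\mu|(|\thf|+1)}\mu_B\bigl(d_{B^{\tensor n}}\thf_n - (-1)^{|\thf|}\thf_n d_{A^{\tensor n}}\bigr) = (-1)^{|\mu||\partial(\thf)|}\mu_B\,\partial(\thf)_n,
\]
as required. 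I expect the sign bookkeeping here to be the main obstacle; it is purely mechanical but easy to get wrong.

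For the second bullet, the condition $\thf_{p+q}=\thf_p\tensor\thf_q$ taken with $(p,q)=(n,0)$ forces $\thf_0(1)=1$ (otherwise $\thf$ is identically zero), and a straightforward induction on $n$ using the $(n-1,1)$ case gives $\thf_n = \thf_1^{\tensor n}$. Writing $f=\thf_1$, the thick-map condition $\thf_1\mu_A = \mu_B\thf_n$ then becomes $f\mu_A = \mu_B f^{\tensor n}$ for every $\mu\in\Op(n)$, which is exactly the condition that $f$ be a morphism of $\Op$-algebras. Conversely, any morphism $f$ gives the thick map $\thf_n=f^{\tensor n}$, which is automatically symmetric and satisfies the multiplicativity.

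For the third bullet, taking $(p,q)=(0,0)$ in $\thd_{p+q}=\thd_p\tensor\tho+\tho\tensor\thd_q$ forces $\thd_0=0$, and induction on $n$ yields
\[
\thd_n = \sum_{i=1}^{n}\tho^{\tensor i-1}\tensor \thd_1\tensor \tho^{\tensor n-i}.
\]
Writing $d=\thd_1$, the thick-map relation (applied with $B=A$) now reads
\[
d\,\mu_A = (-1)^{|\mu||d|}\mu_A\sum_{i=1}^{n}1^{\tensor i-1}\tensor d\tensor 1^{\tensor n-i},
\]
which is the standard definition of a derivation of an $\Op$-algebra in the sense of \cite[\S 5.3.8]{LV}, the Koszul signs being encoded in the tensor notation. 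The inverse assignment $d\mapsto\{\thd_n\}$ is clear.
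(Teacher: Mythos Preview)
Your proposal is correct and is precisely the exercise the paper has in mind: the paper's own proof reads in its entirety ``This is an exercise in the definitions.'' You have carried out that exercise, including the only nontrivial point, namely the sign bookkeeping for closure under $\partial$ using the dg-operad compatibility $d_A\mu_A = (d\mu)_A + (-1)^{|\mu|}\mu_A d_{A^{\tensor n}}$.
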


\begin{proof}
This is an exercise in the definitions.
\end{proof}

\begin{definition} \label{def:thick op-algebra contraction}
We define an \defn{$\Op$-algebra contraction} to be a contraction
$$\Diag \colon \SDR{A}{B}{\thf}{\thg}{\thh}$$
in the dg-category $T_\Op(\Cat)$, where $\thf$ and $\thg$ are morphisms and $\thh$ is a pseudo-derivation.
\end{definition}

\begin{proof}[Proof of Theorem \ref{thm:PL}]
By Theorem \ref{thm:thick perturbation}, $\Diag^\tht$ is a thick contraction, $\thfp$ and $\thgp$ are morphisms, $\thhp$ is a pseudo-derivation, $\thtp$ is a derivation, and $t = \tht_1$ and $t' = \tht_1'$ are perturbations of $A$ and $B$. We need to verify that the perturbed objects $A^t$ and $B^{t'}$ are $\Op$-algebras and that $\thfp$, $\thgp$ and $\thhp$ are thick maps of $\Op$-algebras between $A^t$ and $B^{t'}$.

Since $\tht$ and $\thh$ are thick maps of $\Op$-algebras from $A$ to itself, it follows that so are $\tho-\thh\tht$, the inverse $(\tho-\thh\tht)^{-1}$, and $\thS = \tht(\tho-\thh\tht)^{-1}$. Hence the perturbed maps $\thfp$, $\thgp$, $\thhp$ and $\thtp$, being built by composing and adding thick maps of $\Op$-algebras, are again thick maps of $\Op$-algebras, viewed as thick maps between $A$ and $B$.

Since $\tht\colon A\rightarrow A$ is a derivation and a thick map of $\Op$-algebras, $t$ is a derivation of $\Op$-algebras. Therefore $A^t$, which is just $A$ with perturbed differential $d_A + t$,  becomes an $\Op$-algebra with the same structure maps as $A$. Similarly, since $\thtp\colon B\rightarrow B$ is a derivation and a thick map of $\Op$-algebras, $B^{t'}$ is an $\Op$-algebra with the same structure maps as $B$.

Since the $\Op$-algebra structure maps for $A^t$ and $B^{t'}$ are the same as those for $A$ and $B$ respectively, the thick maps $\thfp$, $\thgp$, $\thhp$ and $\thtp$ are indeed thick maps of $\Op$-algebras between $A^t$ and $B^{t'}$.
\end{proof}

Invertibility of $\tho-\thh\tht$ can be ensured by having suitable filtrations on the objects.

\section{Tensor trick for algebras over operads} \label{section:TT}
\begin{proposition} \label{prop:Schur}
Let $\Op$ be a symmetric sequence. The associated Schur functor $\Op[\dash]\colon \Cat\rightarrow \Cat$ extends to a dg-functor $\Op_\bullet[\dash]\colon T_\Sigma(\Cat)\rightarrow T_\Sigma(\Cat)$. This extended Schur functor preserves morphisms and pseudo-derivations. If $\Op$ is an operad and if $\thf$ is any symmetric thick map then $\Op_\bullet[\thf]$ is a symmetric thick map of $\Op$-algebras.

Similarly, for a non-symmetric sequence $\Op$ there is an extension of the associated Schur functor to a dg-functor $\Op_\bullet[-]\colon T_\numbers(\Cat) \rightarrow T_\numbers(\Cat)$ which preserves morphisms and pseudo-derivations. If $\Op$ is a non-symmetric operad then $\Op_\bullet[\thf]$ is a thick map of $\Op$-algebras for any thick map $\thf$.
\end{proposition}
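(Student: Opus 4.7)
\emph{Proof plan.} For each symmetric thick map $\thf\colon A\rightarrow B$ and each $m\geq 0$, the plan is to define
\[
\Op_\bullet[\thf]_m\colon \Op[A]^{\tensor m}\rightarrow \Op[B]^{\tensor m}
\]
using the canonical isomorphism $\Op[A]^{\tensor m}\cong \Op^{\tensor m}[A] = \bigoplus_n \Op^{\tensor m}(n)\tensor_{\Symgp{n}} A^{\tensor n}$ and setting, on the $n$-th summand, $\Op_\bullet[\thf]_m = 1_{\Op^{\tensor m}(n)}\tensor_{\Symgp{n}}\thf_n$. Well-definedness of $1\tensor_{\Symgp{n}}\thf_n$ requires $\thf_n$ to descend to $\Symgp{n}$-coinvariants, which is precisely the symmetry hypothesis; in the non-symmetric case the Schur functor uses $\Op(n)\tensor A^{\tensor n}$ without quotient, so symmetry is unnecessary. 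Linearity, differential-compatibility and functoriality are immediate from the corresponding properties of $\tensor_{\Symgp{n}}$ in the variable $\thf_n$. For $\Symgp{m}$-equivariance of $\Op_\bullet[\thf]_m$, one observes that $\Symgp{m}$ acts on $\Op[A]^{\tensor m}$ by permuting tensor factors on $\Op^{\tensor m}$ together with the corresponding block permutation on $A^{\tensor n}$; the latter lies in $\Symgp{n}$ and hence commutes with the $\Symgp{n}$-equivariant map $\thf_n$.

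First I would handle morphism preservation. Under the natural isomorphism $\Op^{\tensor(p+q)}\cong \Op^{\tensor p}\tensor\Op^{\tensor q}$ of symmetric sequences, the summand of $\Op[A]^{\tensor(p+q)}$ indexed by $(a_1,\ldots,a_{p+q})$ is identified with the tensor product of the $(a_1,\ldots,a_p)$-summand of $\Op[A]^{\tensor p}$ and the $(a_{p+1},\ldots,a_{p+q})$-summand of $\Op[A]^{\tensor q}$. With $k=a_1+\cdots+a_p$ and $l=a_{p+1}+\cdots+a_{p+q}$, the restrictions of $\Op_\bullet[\thf]_{p+q}$ and of $\Op_\bullet[\thf]_p\tensor \Op_\bullet[\thf]_q$ to this summand are $1\tensor_\Symgpd \thf_{k+l}$ and $1\tensor_\Symgpd(\thf_k\tensor \thf_l)$ respectively, so the morphism identity $\thf_{k+l}=\thf_k\tensor \thf_l$ yields $\Op_\bullet[\thf]_{p+q}=\Op_\bullet[\thf]_p\tensor \Op_\bullet[\thf]_q$ summand-by-summand.

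The crux is the preservation of pseudo-derivations. For $\thh$ a pseudo-derivation, set $\thH=\Op_\bullet[\thh]$. On the summand $\Op(a_1)\tensor\cdots\tensor\Op(a_{p+q})\tensor_\Symgpd A^{\tensor n}$, with $n=a_1+\cdots+a_{p+q}$ and $k,l$ as above, the operators $\thH_p\tensor\tho$, $\tho\tensor\thH_q$ and $\thH_{p+q}$ restrict to $1\tensor_\Symgpd(\thh_k\tensor\tho)$, $1\tensor_\Symgpd(\tho\tensor \thh_l)$ and $1\tensor_\Symgpd \thh_n$. Consequently the pseudo-derivation equations for $\thH$ reduce, summand-by-summand, to the identities
\[
(\thh_k\tensor\tho-\tho\tensor \thh_l)\thh_n = \thh_k\tensor \thh_l \quad \text{and} \quad \thh_n(\thh_k\tensor\tho - \tho\tensor\thh_l) = -\thh_k\tensor \thh_l,
\]
which are exactly the pseudo-derivation property of $\thh$ applied at the pair $(k,l)$.

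Finally, when $\Op$ is an operad I would verify the thick-map-of-$\Op$-algebras condition $\Op_\bullet[\thf]_1\mu_{\Op[A]}=(-1)^{|\mu||\thf|}\mu_{\Op[B]}\Op_\bullet[\thf]_n$ for each $\mu\in \Op(n)$ by evaluating both sides on a representative $(\nu_1\tensor\cdots\tensor\nu_n)\tensor_\Symgpd y$ of $\Op[A]^{\tensor n}$ with $\nu_i\in\Op(a_i)$ and $y\in A^{\tensor N}$, $N=a_1+\cdots+a_n$. A direct computation shows that both sides yield $\gamma(\mu;\nu_1,\ldots,\nu_n)\tensor_\Symgpd \thf_N(y)$, differing only by the Koszul sign $(-1)^{|\mu||\thf|}$ that records interchanging $\thf$ and $\mu$. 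The non-symmetric case is identical with the $\Symgp{n}$-quotients suppressed throughout. I expect the main obstacle to be the bookkeeping in the pseudo-derivation reduction, where one must identify the decomposition $\Op^{\tensor(p+q)}\cong\Op^{\tensor p}\tensor\Op^{\tensor q}$ correctly and track which operadic arities $a_i$ contribute to $k$ versus $l$.
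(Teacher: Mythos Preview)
Your proposal is correct and follows essentially the same approach as the paper's proof: define $\Op_\bullet[\thf]_m$ via the identification $\Op[A]^{\tensor m}\cong \Op^{\tensor m}[A]$ and the map $\bigoplus_n 1\tensor_{\Symgp{n}}\thf_n$, then verify morphism and pseudo-derivation preservation summand-by-summand exactly as you do. The only cosmetic differences are that the paper justifies $\Sigma_m$-equivariance by invoking that $\Op\mapsto\Op[\dash]$ is a symmetric monoidal functor (your block-permutation argument is the concrete unpacking of this), and for the operad case the paper uses the commutative square identifying $\Op[\thF]$ with $(\Op\circ\Op)[\thf]$ under $\Op[\Op[A]]\cong(\Op\circ\Op)[A]$ rather than your direct elementwise computation; both amount to the same verification.
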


\begin{proof}
We will consider the symmetric case. The non-symmetric case is practically identical. The extension will be carried out in two steps. Firstly, note that the Schur functor $\Op[\dash]\colon \Cat \rightarrow \Cat$ extends to a dg-functor $T_\Sigma(\Cat)\rightarrow \Cat$. Indeed, if $\thf\colon A\rightarrow B$ is a symmetric thick map then let
$$\Op[\thf] = \bigoplus_{n\geq 0} 1\tensor_\Symgp{n} \thf_n\colon \Op[A]\rightarrow \Op[B].$$
It is straightforward to check that $\Op[\dash]$ is $\kk$-linear, that $\Op[\partial(\thf)] = \partial(\Op[\thf])$ and that $\Op[\thf \circ \thg] = \Op[\thf]\circ \Op[\thg]$.

Secondly, for a symmetric thick map $\thf \colon A\rightarrow B$, the $n^{th}$ level $\Op_n[\thf]$ of the thick map $\Op_\bullet[\thf]\colon \Op[A]\rightarrow \Op[B]$ is defined by requiring commutativity of the following diagram
$$\xymatrix{\Op[A]^{\tensor n} \ar[d]^-\cong \ar[rr]^-{\Op_n[\thf]} && \Op[B]^{\tensor n} \ar[d]^-\cong \\ \Op^{\tensor n}[A] \ar[rr]^-{\Op^{\tensor n}[\thf]} && \Op^{\tensor n}[B].}$$
Here, the lower horizontal map $\Op^{\tensor n}[\thf]$ is obtained by applying the dg-functor
$$\Op^{\tensor n}[\dash]\colon T_\Sigma(\Cat)\rightarrow \Cat$$
obtained in the first step to the symmetric thick map $\thf$. The vertical maps are given by the natural isomorphism $\Op[\dash]^{\tensor n} \cong \Op^{\tensor n}[\dash]$ of functors from $\Cat$ to itself. To be more explicit, observe that there is an isomorphism
$$\Op[A]^{\tensor n} \cong \bigoplus_{r_1,\ldots,r_n\geq 0} (\Op(r_1)\tensor \ldots \tensor \Op(r_n))\tensor_{\Symgp{r_1}\times \ldots \times \Symgp{r_n}} A^{\tensor (r_1 +\ldots + r_n)}.$$
On the summand indexed by $(r_1,\ldots,r_n)$, the map $\Op_n[\thf]$ acts as $\thf_{r_1+\ldots + r_n}$. The thick map $\Op_\bullet[\thf]$ is symmetric because $\Op \mapsto \Op[\dash]$ is a symmetric monoidal functor (see \cite[Proposition 2.1.5]{Fresse}). $\Op_\bullet[\dash]$ is a dg-functor because it is so at each level. Thus, we have obtained the required extension.

Suppose that $\thh\colon A\rightarrow A$ is a pseudo-derivation. We need to show that the thick map $\thH = \Op_\bullet[\thh]\colon \Op[A]\rightarrow \Op[A]$ is a pseudo-derivation. Indeed, for any $p,q$ the restriction of the map
$$(\thH_p\tensor \tho - \tho\tensor \thH_q)\thH_{p+q}\colon \Op[A]^{\tensor (p+q)}\rightarrow \Op[A]^{\tensor (p+q)}$$
to the summand indexed by $(r_1,\ldots,r_{p+q})$ acts on the right factor $A^{\tensor (r_1+\ldots + r_{p+q})}$ as
$$(\thh_i\tensor \tho - \tho\tensor \thh_j)\thh_{i+j}$$
where $i = r_1+\ldots+r_p$ and $j = r_{p+1}+\ldots+r_{p+q}$. Since $\thh $ is a pseudo-derivation, this is equal to $\thh_i\tensor \thh_j$. But this is exactly how the map $\thH_p\tensor \thH_q\colon \Op[A]^{\tensor (p+q)}\rightarrow \Op[A]^{\tensor (p+q)}$ restricted to the component indexed by $(r_1,\ldots,r_{p+q})$ acts on the right factor. Thus,
$$(\thH_p\tensor \tho - \tho\tensor \thH_q)\thH_{p+q} = \thH_p\tensor \thH_q.$$
By the same argument
$$-\thH_{p+q}(\thH_p\tensor \tho - \tho\tensor \thH_q) = \thH_p\tensor \thH_q,$$
and so $\thH$ is a pseudo-derivation. The proof that the dg-functor $\Op_\bullet[\dash]\colon T_\Sigma(\Cat)\rightarrow T_\Sigma(\Cat)$ takes morphisms to morphisms is similar.

Finally, suppose that $\Op$ is an operad and let $\thf\colon A\rightarrow B$ be any symmetric thick map. We need to show that $\thF  = \Op_\bullet[\thf]$ is a thick map of $\Op$-algebras. It is straightforward to check that the diagram
$$\xymatrix{\Op[\Op[A]] \ar[d]^-\cong \ar[rr]^-{\Op[\thF]} && \Op[\Op[B]] \ar[d]^-\cong \\ (\Op\circ \Op)[A] \ar[rr]^-{(\Op\circ \Op)[\thf]} && (\Op\circ \Op)[B]}$$
commutes. Since the $\Op$-algebra structure on $\Op[A]$ is given by the composite
$$\xymatrix{\Op[\Op[A]] \ar[r]^-\cong & (\Op\circ \Op)[A] \ar[r]^-{\gamma_\Op[A]} & \Op[A]},$$
see \cite[\S 3.2.13]{Fresse}, this implies that $\thF$ is a thick map of $\Op$-algebras.
\end{proof}

\begin{proof}[Proof of Theorem \ref{thm:tt}]
With $\thh$ as in Theorem \ref{thm:tt}, we get a contraction in the dg-category $T_\Sigma(\Cat)$
$$\SDR{A}{B}{\thf}{\thg}{\thh}$$
that extends the original contraction $\Diag$. Any dg-functor preserves contractions, so if we apply the extended Schur functor $\Op_\bullet[-]$ from Proposition \ref{prop:Schur} we get a contraction of $\Op$-algebras with the desired properties. The second part of Theorem \ref{thm:tt} follows from Proposition \ref{prop:algebra contraction} and Theorem \ref{thm:symmetric tensor trick}.
\end{proof}

We will now show the necessity of the assumption $\rationals\subseteq \kk$ in Theorem \ref{thm:symmetric tensor trick}.
\begin{proposition} \label{prop:sym}
If every contraction $\Diag$ can be extended to a symmetric thick contraction $\thDiag$ then necessarily $\rationals \subseteq \kk$.
\end{proposition}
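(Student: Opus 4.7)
The plan is to exhibit a single explicit contraction for which extending to a symmetric thick contraction forces every positive integer to be invertible in $\kk$. Take the acyclic chain complex $A$ with basis $x$ in degree $0$ and $y$ in degree $1$, differential $d_A(y)=x$, $d_A(x)=0$, and contract it to $B=0$. The only possible maps are $f=0$, $g=0$, and $h$ of degree $1$ with $h(x)=-y$, $h(y)=0$. A direct check shows that $\partial(h)=-1_A$, $hh=0$, and the remaining annihilation conditions are trivially satisfied, so $\Diag\colon\SDR{A}{B}{f}{g}{h}$ is a genuine contraction.

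Now suppose $\Diag$ extends to a symmetric thick contraction $\thDiag$. Since $B=0$, the extension must have $\thf_n=0$ and $\thg_n=0$ for $n\geq 1$, so the contraction relations reduce to $\partial(\thh_n)=-1_{A^{\tensor n}}$ (together with $\thh_n\thh_n=0$). I will focus on what this says about $\thh_n(x^{\tensor n})$. Because $|x|=0$, the element $x^{\tensor n}\in A^{\tensor n}$ is $\Sigma_n$-invariant, so by equivariance $\thh_n(x^{\tensor n})$ lies in the $\Sigma_n$-invariant part of $A^{\tensor n}$ in degree $1$. Degree-$1$ elements of $A^{\tensor n}$ have exactly one $y$-tensor-factor, and averaging shows that the invariant subspace is one-dimensional, spanned by
\[
e_n \;=\; \sum_{i=0}^{n-1} x^{\tensor i}\tensor y \tensor x^{\tensor (n-1-i)}.
\]
Hence there is a scalar $c_n\in\kk$ with $\thh_n(x^{\tensor n})=c_n e_n$.

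The tensor product differential satisfies $d_{A^{\tensor n}}(x^{\tensor n})=0$ and $d_{A^{\tensor n}}(e_n)=n\,x^{\tensor n}$ (there are no Koszul signs since $|x|=0$). Evaluating $\partial(\thh_n)=-1_{A^{\tensor n}}$ on $x^{\tensor n}$ therefore gives
\[
-x^{\tensor n} \;=\; d_{A^{\tensor n}}\thh_n(x^{\tensor n}) + \thh_n d_{A^{\tensor n}}(x^{\tensor n}) \;=\; c_n\cdot n\cdot x^{\tensor n},
\]
so $n\, c_n = -1$ in $\kk$. This forces $n$ to be a unit in $\kk$ for every $n\geq 1$, whence $\rationals\subseteq\kk$.

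The two steps that merit the most care are (i) verifying that the $\Sigma_n$-invariants in degree $1$ of $A^{\tensor n}$ really is the one-dimensional space spanned by $e_n$, which is immediate from the basis description, and (ii) correctly keeping track of Koszul signs in the tensor differential, which is trivialized here by the even degree of $x$. No other step presents a genuine obstacle, and the argument uses only the contraction $\Diag$ itself, not any further structure, so it shows that rational coefficients are necessary in full generality.
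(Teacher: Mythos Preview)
Your proof is correct. You use the same test contraction as the paper (an acyclic two-term complex contracted to zero; the paper places the generators in degrees $2$ and $1$ rather than $0$ and $1$, but this is immaterial since only the parity of $|x|$ matters). The route is different, however. The paper invokes the extended Schur functor of Proposition~\ref{prop:Schur} for the symmetric sequence $\OpS$ with $\OpS(n)=\kk$ trivial: a symmetric thick contraction of $A$ onto $0$ would yield an ordinary contraction of the symmetric algebra $\OpS[A]$ onto $\OpS[0]=0$, and one then identifies $\OpS[A]\cong\bigoplus_{n\geq 0} D^{2n+2}(n{+}1)$ and observes that each summand is contractible only when $n{+}1$ is a unit. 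You instead stay at level $n$ of the thick contraction and evaluate the single relation $\partial(\thh_n)=-1_{A^{\tensor n}}$ on the $\Sigma_n$-invariant vector $x^{\tensor n}$, using equivariance to force $\thh_n(x^{\tensor n})$ into the one-dimensional invariant subspace spanned by $e_n$. Your argument is more elementary and self-contained---it needs neither the Schur-functor machinery nor the notion of pseudo-derivation---while the paper's version is more in keeping with its theme, exhibiting the obstruction conceptually as non-contractibility of the free commutative algebra. Underneath, the two computations coincide: the factor of $n$ you find in $d_{A^{\tensor n}}(e_n)=n\,x^{\tensor n}$ is exactly the factor appearing in $d(x^n)=n\,x^{n-1}y$ in the paper's coinvariant picture.
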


\begin{proof}
For integers $n$ and $m$, let $\Disc{m}{n}$ denote the chain complex whose underlying graded $\kk$-module has one generator $x$ in degree $n$ and one generator $y$ in degree $n-1$, and where the differential is given by $d(x) = my$ and $d(y) = 0$. Defining $h\colon \Disc{1}{2}\rightarrow \Disc{1}{2}$ by $h(x) = 0$, $h(y) = x$, and $f = 0$, $g = 0$, we get a contraction
$$\Diag \colon \SDR{\Disc{1}{2}}{0}{f}{g}{h}.$$
If this had an extension to a symmetric thick contraction $\thDiag$, then we could apply $\Op[\dash]$ to this, for any symmetric sequence $\Op$. Consider the particular symmetric sequence $\OpS$ with $\OpS(0) = 0$ and $\OpS(n) = \kk$, the trivial representation of $\Symgp{n}$, for $n\geq 1$. The value at $A$ of the associated Schur functor is the (non-unital) symmetric algebra on $A$.
Applying $\OpS[\dash]$ to the symmetric thick contraction $\thDiag$, we would get a contraction
$$\bigSDR{\OpS[\Disc{1}{2}]}{\OpS[0]}{F}{G}{H}.$$
But $\OpS[0] = 0$, so this can only happen if $\OpS[\Disc{1}{2}]$ is contractible. As a graded module, $\OpS[\Disc{1}{2}]$ has basis $x^{n+1}$, $x^ny$, where $|x^{n+1}|=2n+2$ and $|x^ny| = 2n+1$. The differential is given by $d(x^{n+1}) = (n+1)x^ny$ and $d(x^ny) = 0$, so there is a direct sum decomposition
$$\OpS[\Disc{1}{2}] \cong \bigoplus_{n\geq 0} \Disc{n+1}{2n+2}.$$
Therefore, $\OpS[\Disc{1}{2}]$ is contractible if and only if $\Disc{n+1}{2n+2}$ is contractible for all $n\geq 0$. But $\Disc{n+1}{2n+2}$ is contractible if and only if $n+1$ is invertible in $\kk$. Hence, $\OpS[\Disc{1}{2}]$ is contractible if and only if $\rationals \subseteq \kk$.
\end{proof}

\section{Perturbation lemma and tensor trick for coalgebras over cooperads} \label{section:cooperads}
In this section we will dualize the results of the previous sections. The proofs are virtually the same and will therefore be omitted.

\begin{definition}
Let $\Coop$ be a cooperad and let $A$ and $B$ be $\Coop$-coalgebras. We define a  \defn{thick map of $\Coop$-coalgebras} to be a symmetric thick map $\thf\colon A\rightarrow B$ such that the diagram
$$\xymatrix{A \ar[d]^-{\Delta_A} \ar[r]^-{f_1} & B \ar[d]^-{\Delta_B} \\ \Coop[A] \ar[r]^-{\Coop[\thf]} & \Coop[B]}$$
commutes.
\end{definition}

\begin{proposition} \label{prop:TC}
Let $A$,$B$,$C$ be $\Coop$-coalgebras.
\begin{itemize}
\item If $\thf,\thg\colon A\rightarrow B$ and $\thh\colon B\rightarrow C$ are thick maps of $\Coop$-coalgebras, then so are
$\thh\circ \thf$, $\partial(\thf)$ and $a\thf + b\thg$, for $a,b\in \kk$. In other words, $\Coop$-coalgebras and thick maps of $\Coop$-coalgebras form a dg-subcategory $T_\Coop(\Cat)$ of the dg-category $T_\Sigma(\Cat)$ of chain complexes and symmetric thick maps.

\item Morphisms of $\Coop$-coalgberas $\thf\colon A\rightarrow B$ may be identified with thick maps of $\Coop$-coalgebras $\thf\colon A\rightarrow B$ that satisfy $\thf_{p+q} = \thf_p\tensor \thf_q$ for all $p,q$.

\item Coderivations of $\Coop$-coalgebras $d\colon A\rightarrow A$ may be identified with thick maps of $\Coop$-coalgebras $\thd\colon A\rightarrow A$ that satisfy $\thd_{p+q} = \thd_p\tensor\tho + \tho\tensor \thd_q$ for all $p,q$.
\end{itemize}
\end{proposition}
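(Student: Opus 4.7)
The plan is to dualize the proof of Proposition \ref{prop:TO}, which itself is essentially a chase of definitions, by reversing the direction of all structural arrows. The key observation is that the defining condition for a symmetric thick map $\thf \colon A \to B$ to be a thick map of $\Coop$-coalgebras is the commutativity of the square
$$\Coop[\thf] \circ \Delta_A = \Delta_B \circ \thf_1,$$
and that the assignments $\thf \mapsto \Coop[\thf]$ and $\thf \mapsto \thf_1$ are both $\kk$-linear and respect composition and the differential $\partial$. Each of the three bullet points then reduces to a short diagram chase.

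For the dg-subcategory claim, I would verify each closure property in turn. Composition works because $(\thh \circ \thf)_n = \thh_n \circ \thf_n$ implies $\Coop[\thh \circ \thf] = \Coop[\thh] \circ \Coop[\thf]$, so the required square for the composite is obtained by pasting the squares for $\thf$ and $\thh$. For the differential, the identity $\Coop[\partial(\thf)] = \partial(\Coop[\thf])$ combined with the fact that $\Delta_A$ and $\Delta_B$ are chain maps yields the square for $\partial(\thf)$ from that of $\thf$. Linearity is immediate from the $\kk$-linearity of $\Coop[-]$ on thick maps.

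For the second bullet, given a morphism of $\Coop$-coalgebras $f \colon A \to B$, I would set $\thf_n = f^{\tensor n}$; this manifestly satisfies $\thf_{p+q} = \thf_p \tensor \thf_q$, and since $\Coop[\thf]$ then coincides with the usual $\Coop[f]$, the commutative-square condition reduces to the classical morphism condition $\Delta_B f = \Coop[f] \Delta_A$. Conversely, any thick map of $\Coop$-coalgebras $\thf$ satisfying $\thf_{p+q} = \thf_p \tensor \thf_q$ is determined by $\thf_1$, and its commuting square expresses precisely that $\thf_1$ is a morphism.

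For the third bullet, given a coderivation $d \colon A \to A$, I would define $\thd_n = \sum_{i+1+j=n} 1^{\tensor i} \tensor d \tensor 1^{\tensor j}$, the unique thick map satisfying the $(\tho,\tho)$-derivation relation with $\thd_1 = d$, and verify that the square for $\thd$ reproduces the standard coderivation compatibility with $\Delta$. The main (and essentially only) point requiring attention is computing the action of $\Coop[\thd]$ on a summand $\Coop(n) \tensor_{\Symgp{n}} A^{\tensor n}$: the derivation relation forces it to act by summing $d$ inserted in each tensor factor, which is precisely what the classical coderivation condition encodes. Everything else is a formal dualization of Proposition \ref{prop:TO}, which explains why the paper omits the proof.
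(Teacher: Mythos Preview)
Your proposal is correct and aligns with the paper's approach: the paper omits the proof entirely, stating at the start of Section~\ref{section:cooperads} that the proofs are virtually the same as in the operad case and will therefore be omitted, and the proof of the dual Proposition~\ref{prop:TO} is itself just ``an exercise in the definitions.'' Your sketch is a faithful fleshing-out of that exercise via the expected dualization.
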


\begin{definition} \label{def:thick coop-coalgebra contraction}
We define a \defn{contraction of $\Coop$-coalgebras} to be a contraction
$$\Diag \colon\SDR{A}{B}{\thf}{\thg}{\thh}$$
in the dg-category $T_\Coop(\Cat)$, where $\thf$ and $\thg$ are morphisms and $\thh$ is a pseudo-derivation.
\end{definition}

\begin{theorem}[$\Coop$-coalgebra Perturbation Lemma] \label{thm:TCPL}
Let $\Diag$ be a contraction of $\Coop$-coalgebras. If $\tht$ is a perturbation of $A$ then, provided the series $\tht+\tht\thh\tht +\ldots$ converges, the recursive formulas
\begin{align*}
\thfp & = \thf + \thfp\tht\thh, & \thgp & = \thg + \thh\tht\thgp, \\
\thhp & = \thh + \thhp\tht\thh, & \thtp & = \thf\tht\thgp,
\end{align*}
define a perturbation $\thtp$ of $B$ and a contraction of $\Coop$-coalgebras
$$\Diag^\tht \colon \bigSDR{(A,d_A+\tht_1)}{(B,d_B+\tht_1')}{\thfp}{\thgp}{\thhp}$$
In particular, $\thfp$ and $\thgp$ are morphisms, $\thtp$ is a coderivation and $\thhp$ is a pseudo-derivation.
\end{theorem}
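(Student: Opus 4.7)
The plan is to mirror the proof of Theorem~\ref{thm:PL} in its dual form. The underlying formal result, Theorem~\ref{thm:thick perturbation}, lives purely in the dg-category $T_\Symgpd(\Cat)$ of chain complexes and symmetric thick maps, with no reference to operads or cooperads. Thus, applied to the input contraction $\Diag$ and the perturbation $\tht$ regarded merely as symmetric thick maps, it immediately supplies a symmetric thick contraction $\Diag^\tht$ in which $\thfp$ and $\thgp$ are morphisms, $\thhp$ is a pseudo-derivation, $\thtp$ is a derivation (in the thick-map sense), and $t = \tht_1$, $t' = \thtp_1$ are perturbations of the underlying chain complexes $A$ and $B$.

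What remains is to verify that $\Diag^\tht$ lives inside the dg-subcategory $T_\Coop(\Cat)$ of thick maps of $\Coop$-coalgebras. By Proposition~\ref{prop:TC}, $T_\Coop(\Cat)$ is closed under composition, $\kk$-linear combinations and the differential $\partial$. Since $\thf$, $\thg$, $\thh$ and $\tht$ are thick maps of $\Coop$-coalgebras, so is $\tho - \thh\tht$, and assuming convergence its inverse $(\tho - \thh\tht)^{-1} = \sum_{k \geq 0} (\thh\tht)^k$ is a limit of such thick maps. Hence $\thS = \tht(\tho - \thh\tht)^{-1}$ is a thick map of $\Coop$-coalgebras, and therefore so are
$$\thfp = \thf + \thf\thS\thh, \quad \thgp = \thg + \thh\thS\thg, \quad \thhp = \thh + \thh\thS\thh, \quad \thtp = \thf\thS\thg.$$

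To finish, I would note that the perturbed objects $(A, d_A + t)$ and $(B, d_B + t')$ are again $\Coop$-coalgebras with the same comultiplications $\Delta_A$ and $\Delta_B$. Indeed, $t$ is the level $1$ component of $\tht$, which is simultaneously a derivation of thick maps and a thick map of $\Coop$-coalgebras; by the third bullet of Proposition~\ref{prop:TC} this is exactly the data of a coderivation of the underlying $\Coop$-coalgebra $A$. The same argument applied to $\thtp$ identifies $t'$ as a coderivation of $B$. Since the structure maps are unchanged, the thick maps $\thfp$, $\thgp$ and $\thhp$ from the previous paragraph are automatically thick maps of $\Coop$-coalgebras between the perturbed coalgebras, so $\Diag^\tht$ is a contraction of $\Coop$-coalgebras in the sense of Definition~\ref{def:thick coop-coalgebra contraction}.

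The only nontrivial point, and hence the main obstacle, is confirming that $T_\Coop(\Cat)$ is stable under the geometric series used to build $\thS$, and that the identification of "derivation of thick maps of $\Coop$-coalgebras" with "coderivation at level $1$" is compatible with perturbation. Both are immediate from Proposition~\ref{prop:TC} together with the levelwise description of differentials and compositions in $T_\Symgpd(\Cat)$. Once these are in place, the theorem follows from Theorem~\ref{thm:thick perturbation} by exactly the same formal argument that proved Theorem~\ref{thm:PL}, with "algebra" replaced by "coalgebra" and "derivation" by "coderivation" throughout.
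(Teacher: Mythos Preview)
Your proposal is correct and follows essentially the same approach as the paper, which explicitly omits the proof and says it is obtained by dualizing the proof of Theorem~\ref{thm:PL}. Your argument does exactly this: invoke Theorem~\ref{thm:thick perturbation} for the formal thick-map properties, use Proposition~\ref{prop:TC} (the dual of Proposition~\ref{prop:TO}) to see that the perturbed maps remain in $T_\Coop(\Cat)$, and identify $t$, $t'$ as coderivations so that the perturbed objects carry the same $\Coop$-coalgebra structure maps.
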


\begin{theorem}[$\Coop$-coalgebra Tensor Trick] \label{thm:cooperad tensor trick}
Consider a contraction of chain complexes
$$\Diag\colon\SDR{A}{B}{f}{g}{h}$$
For any choice of symmetric pseudo-derivation $\thh$ such that $\thh_1 = h$ and $\partial(\thh)= \thg\thf-\tho$, $\thh\thh = \thzr$, $\thf\thh = \thzr$, $\thh\thg=\thzr$, where $\thf_n =f^{\tensor n}$ and $\thg_n = g^{\tensor n}$, there is an induced contraction of $\Coop$-coalgebras
$$\Coop_\bullet[\Diag] \colon \bigSDR{\Coop[A]}{\Coop[B]}{\Coop_\bullet[\thf]}{\Coop_\bullet[\thg]}{\Coop_\bullet[\thh]}.$$
If $\Coop$ is a non-symmetric cooperad, then one may drop the condition that $\thh$ be symmetric. There is always a non-symmetric pseudo-derivation $\thh$ with the requisite properties, namely
$$\thh_n = \sum_{p+1+q = n} 1^{\tensor p}\tensor h\tensor (gf)^{\tensor q}.$$
If $\rationals\subseteq \kk$ then, with $\thh_n$ as above,
$$\thh_n^\Sigma = \frac{1}{n!}\sum_{\sigma\in \Symgp{n}} \sigma^{-1} \thh_n \sigma,$$
defines a symmetric pseudo-derivation $\thh^\Sigma$ with the requisite properties.
\end{theorem}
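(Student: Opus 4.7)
The plan is to dualize the proof of Theorem \ref{thm:tt} essentially verbatim. Two inputs are needed: (i) a cooperadic analogue of Proposition \ref{prop:Schur}, stating that the Schur functor $\Coop[-]$ extends to a dg-functor $\Coop_\bullet[-]\colon \Ths{\Cat}\rightarrow \Ths{\Cat}$ which preserves morphisms and pseudo-derivations and which, when $\Coop$ carries a cooperad structure, sends symmetric thick maps to thick maps of $\Coop$-coalgebras; and (ii) existence of a symmetric pseudo-derivation extending the given homotopy $h$.

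For (i), the construction of $\Coop_\bullet[-]$ on objects and on $n$th tensor-power levels depends only on $\Coop$ being a symmetric sequence. The formulas and the dg-functoriality argument from Proposition \ref{prop:Schur} therefore carry over word for word, as do the verifications that morphisms are sent to morphisms and pseudo-derivations to pseudo-derivations (the latter relies only on the tensor-factor bookkeeping $(\thh_i\tensor\tho-\tho\tensor\thh_j)\thh_{i+j}=\thh_i\tensor\thh_j$, not on any operadic structure). The only genuinely cooperadic point is the last step: showing that $\thF=\Coop_\bullet[\thf]$ is a thick map of $\Coop$-coalgebras. I would use that the cofree $\Coop$-coalgebra structure on $\Coop[A]$ is given by the composite
$$\Coop[A]\xrightarrow{\Delta[A]}(\Coop\circ \Coop)[A]\xrightarrow{\cong}\Coop[\Coop[A]],$$
and dualize the commutative square at the end of the proof of Proposition \ref{prop:Schur}, replacing $\gamma\colon \Op\circ \Op\rightarrow \Op$ by $\Delta\colon \Coop\rightarrow \Coop\circ \Coop$, to obtain the required compatibility of $\thF_1$ with $\Delta_A$ and $\Delta_B$ via $\Coop[\thf]$.

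With (i) in hand, the theorem follows immediately. Given the contraction $\Diag$ of chain complexes and a symmetric thick map $\thh$ with the listed properties, the diagram $\SDR{A}{B}{\thf}{\thg}{\thh}$ with $\thf_n=f^{\tensor n}$ and $\thg_n=g^{\tensor n}$ is a contraction in $\Ths{\Cat}$ by inspection (all the relations $\partial(\thh)=\thg\thf-\tho$, $\thf\thg=\tho$ and the annihilation conditions hold by hypothesis). Since any dg-functor preserves contractions, applying $\Coop_\bullet[-]$ yields a contraction, and by (i) its maps are morphisms, a pseudo-derivation, and thick maps of $\Coop$-coalgebras — precisely a contraction of $\Coop$-coalgebras in the sense of Definition \ref{def:thick coop-coalgebra contraction}.

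For (ii), the non-symmetric formula $\thh_n=\sum_{p+1+q=n} 1^{\tensor p}\tensor h\tensor (gf)^{\tensor q}$ yields a pseudo-derivation with the requisite properties over any ring, by Proposition \ref{prop:algebra contraction} together with Proposition \ref{prop:pseudo-derivations generalize derivations}; this handles the non-symmetric cooperad case (where one uses the $\Th{\Cat}$ version of the extended Schur functor). When $\rationals\subseteq\kk$, the symmetrization $\thh^\Sigma$ is a symmetric pseudo-derivation with the same properties by Theorem \ref{thm:symmetric tensor trick}. The main obstacle is thus purely notational: once the commutative square for the cofree $\Coop$-coalgebra structure is checked, every other step transfers from the operad case without modification, which is presumably why the author states that the proof is omitted.
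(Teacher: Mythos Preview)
Your proposal is correct and follows exactly the route the paper intends: the proof is explicitly omitted in Section~\ref{section:cooperads} as a straightforward dualization of Theorem~\ref{thm:tt}, and you have supplied precisely that dualization, including the one genuinely cooperadic step (the commutative square with $\Delta\colon\Coop\to\Coop\circ\Coop$ in place of $\gamma$) and the correct references to Proposition~\ref{prop:algebra contraction}, Proposition~\ref{prop:pseudo-derivations generalize derivations}, and Theorem~\ref{thm:symmetric tensor trick} for the existence of the pseudo-derivations.
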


\section{Thick maps between `cofree' $\Coop$-coalgebras} \label{section:maps}
\begin{proposition} \label{prop:bridge}
Let $\Coop$ be a connected cooperad. A symmetric thick map of $\Coop$-coalgebras $\thF\colon \Coop[A]\rightarrow \Coop[B]$ determines maps
$$\thF^\nu\colon A^{\tensor n}\rightarrow B^{\tensor m}$$
for $\nu\in\Coop^{\tensor m}(n)$, $m,n\geq 1$, such that the $\kk$-linear structure, differentials and symmetric group actions are respected in the sense that
\begin{align*}
\thF^{a\nu + b\nu'} & = a\thF^\nu + b\thF^{\nu'}, \\
(a\thF + b\thG)^\nu & = a\thF^\nu + b\thG^\nu, \\
\partial(\thF^\nu) & = \partial(\thF)^\nu +(-1)^{|\thF|} \thF^{d(\nu)}, \\
\thF^{\tau \nu \sigma} & = \tau \thF^\nu \sigma,
\end{align*}
for any symmetric thick maps $\thF,\thG\colon \Coop[A]\rightarrow \Coop[B]$, and any $\nu,\nu'\in\Coop^{\tensor m}(n)$, $a,b\in\kk$, $\tau\in\Sigma_m$, $\sigma\in\Sigma_n$. Composition of thick maps is respected in the sense that for any $\nu\in\Coop(n)$
\begin{equation*}
(\thF \thG)^\nu = \sum_q (-1)^{|\thG||\nu_q'|} \thF^{\nu_q'} \thG^{\nu_q''}.
\end{equation*}
where $\Delta(\nu) = \sum_q \nu_q' \circ \nu_q''\in (\Coop\circ\Coop)(n)$ for $\nu_q'\in\Coop(r_q)$ and $\nu_q''\in\Coop^{\tensor r_q}(n)$. Furthermore,
\begin{enumerate}
\item $\thF_1$ is determined by the collection of maps $\thF^\nu$, for $\nu\in \Coop^{\tensor m}(n)$.
\item If $\thF$ is a morphism of $\Coop$-coalgebras then $\thF^{\nu_1\tensor \ldots\tensor\nu_m} = \thF^{\nu_1}\tensor \ldots \tensor \thF^{\nu_m}$ for any $\nu_1,\ldots,\nu_m\in\Coop$. In particular, $\thF$ is determined by the collection of maps $\thF^\nu$ for $\nu\in\Coop(n)$.

\item If $\tht\colon \Coop[A]\rightarrow \Coop[A]$ is a weight decreasing coderivation then $\tht^{\nu_1\tensor\ldots\tensor\nu_m} = 0$ unless $\nu_i$ has arity $ > 1$ for exactly one $i$ and $\tht^{1^{\tensor i}\tensor \nu\tensor 1^{\tensor j}} = 1^{\tensor i}\tensor \tht^\nu \tensor 1^{\tensor j}$. In particular, $\tht$ is determined by the collection of maps $\tht^\nu$ for $\nu\in\Coop(n)$. \label{item:coderivation}

\item If $\thF$ is induced by a symmetric thick map $\thf\colon A\rightarrow B$ then $\thF^\nu = 0$ unless $m=n$, and for $1\in \Coop^{\tensor m}(m)$ we have $\thF^1 = \thf_m$.
\end{enumerate}
\end{proposition}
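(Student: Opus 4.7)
}

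The plan is to define $\thF^\nu$ as a matrix element of $\thF_m$ and then derive everything from this definition and the $\Coop$-coalgebra property. Using the natural symmetric monoidal isomorphism $\Coop[X]^{\tensor m} \cong \Coop^{\tensor m}[X]$, view $\thF_m$ as a map $\Coop^{\tensor m}[A] \to \Coop^{\tensor m}[B]$. For $\nu \in \Coop^{\tensor m}(n)$, let $\iota_\nu \colon A^{\tensor n} \to \Coop^{\tensor m}[A]$ be the inclusion $a \mapsto \nu \tensor_{\Symgp{n}} a$ into the weight-$n$ summand. Connectedness of $\Coop$ forces $\Coop^{\tensor m}(m) \tensor_{\Symgp{m}} B^{\tensor m} \cong B^{\tensor m}$, yielding a projection $\pi^{(m)}_B \colon \Coop^{\tensor m}[B] \to B^{\tensor m}$ onto the weight-$m$ summand. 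Set $\thF^\nu = \pi^{(m)}_B \circ \thF_m \circ \iota_\nu$. The elementary naturality properties ($\kk$-linearity in $\thF$ and $\nu$, differential compatibility, and equivariance) are then immediate from the corresponding properties of $\iota_\nu$, $\pi^{(m)}_B$ and $\thF_m$; in particular, $\partial(\iota_\nu) = \iota_{d(\nu)}$ gives $\partial(\thF^\nu) = \partial(\thF)^\nu + (-1)^{|\thF|}\thF^{d(\nu)}$, and the $\Symgp{m}$-equivariance uses that $\thF$ is symmetric.

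The composition formula and property (1) both rest on one key identity. Writing $\Delta(\mu) = \sum_q \nu_q'\circ \nu_q''$ with $\nu_q' \in \Coop(r_q)$, the $\Coop$-coalgebra condition $\Delta_{\Coop[B]}\thF_1 = \Coop[\thF]\Delta_{\Coop[A]}$ evaluated at $\mu\tensor a$, combined with the Koszul sign from pulling $\thF_{r_q}$ past $\nu_q'$ inside $\Coop[\thF] = \bigoplus_r 1\tensor\thF_r$, yields
$$\Delta_{\Coop[B]}\thF_1(\mu \tensor a) = \sum_q (-1)^{|\thF||\nu_q'|}\,\nu_q' \tensor \thF_{r_q}(\nu_q'' \tensor a).$$
Projecting the outer $\Coop$-factor to weight $k$ and then the inner $\Coop^{\tensor k}[B]$-factor to weight $k$, the left-hand side equals $\pi^{(k)}_B \thF_1(\mu \tensor a)$ (because for any element of weight $k$ in $\Coop[B]$, the unique outer-weight-$k$ term of its coproduct is $\alpha \circ 1^{\tensor k}$, which recovers the original element after the inner weight-$k$ projection), while the right-hand side becomes $\sum_{q:\, r_q = k} (-1)^{|\thF||\nu_q'|}\,\nu_q' \tensor \thF^{\nu_q''}(a)$. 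Summing over $k$ recovers $\thF_1$ entirely in terms of the $\thF^\nu$'s, proving (1). Substituting the $\thG$-analogue of this expansion into $(\thF\thG)^\mu = \pi^{(1)}_C \thF_1\thG_1 \iota_\mu$ and using the $k=1$ specialization $\pi^{(1)}\thF_1(\nu_q'\tensor b) = \thF^{\nu_q'}(b)$ yields the claimed composition formula.

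For the remaining items: (2) a morphism satisfies $\thF_m = \thF_1^{\tensor m}$ by iteration, so evaluation at a pure tensor $(\nu_1\tensor\cdots\tensor\nu_m)\tensor(a_1\tensor\cdots\tensor a_m)$ followed by weight-$m$ projection factors as $\thF^{\nu_1}\tensor\cdots\tensor\thF^{\nu_m}$; (3) iterating $\tht_{p+q} = \tht_p\tensor\tho + \tho\tensor\tht_q$ gives $\tht_m = \sum_{i=1}^m \tho^{\tensor(i-1)}\tensor\tht_1\tensor\tho^{\tensor(m-i)}$, and weight-$m$ projection forces each $\nu_j$ with $j\neq i$ to equal $1 \in \Coop(1)$, while weight-decreasingness kills the residual terms with $\nu_i$ of arity $1$ (since $\tht^1 = 0$); (4) for $\thF$ induced by a symmetric thick map $\thf$, the level $\thF_m$ sends $\nu \tensor a \mapsto \nu \tensor \thf_n(a)$ and hence preserves weight, so $\thF^\nu = 0$ unless $m = n$, and $\thF^1 = \thf_m$ for $\nu = 1^{\tensor m}$ follows directly from the definition of $\pi^{(m)}_B$.

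The main obstacle will be justifying the key identity in the second paragraph, in particular tracking the Koszul sign $(-1)^{|\thF||\nu_q'|}$ and verifying that the iterated outer/inner weight-$k$ projection of $\Delta_{\Coop[B]}\thF_1(\mu\tensor a)$ coincides with $\pi^{(k)}_B\thF_1(\mu\tensor a)$, a fact which depends essentially on the cofree coalgebra structure of $\Coop[B]$ together with the identification $\Coop^{\tensor k}(k) \tensor_{\Symgp{k}} B^{\tensor k} \cong B^{\tensor k}$.
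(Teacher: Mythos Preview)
Your proposal is correct and takes essentially the same approach as the paper. The only difference is cosmetic: the paper writes the projection $\pi^{(m)}_B$ as $\epsilon^{\tensor m}$ using the counit $\epsilon\colon \Coop[B]\to B$, and then your ``iterated outer/inner weight-$k$ projection'' argument is replaced by a direct appeal to the counit axiom $\Coop[\epsilon]\circ\Delta_{\Coop[B]} = \mathrm{id}$, which yields the key identity $\thF_1\iota_\nu = \sum_q (-1)^{|\thF||\nu_q'|}\iota_{\nu_q'}\thF^{\nu_q''}$ in one step; your treatment of items (2)--(4) is more explicit than the paper's, which simply declares them straightforward.
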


\begin{proof}
For $\nu\in\Coop^{\tensor m}(n)$, let $\iota_\nu\colon A^{\tensor n} \rightarrow \Coop^{\tensor m}[A]$ be the map $a\mapsto \nu\tensor a$. Let $\epsilon\colon \Coop[A]\rightarrow A$ denote the map induced by the counit of $\Coop$. Given a symmetric thick map of $\Coop$-coalgebras $\thF\colon \Coop[A]\rightarrow \Coop[B]$, we let $\thF^\nu$ be the composite
$$\xymatrix{A^{\tensor n} \ar[r]^-{\iota_\nu} & \Coop^{\tensor m}[A] \cong \Coop[A]^{\tensor m} \ar[r]^-{\thF_m} & \Coop[B]^{\tensor m} \ar[r]^-{\epsilon^{\tensor m}} & B^{\tensor m}}$$
$$\thF^\nu = \epsilon^{\tensor m}\thF_m\iota_{\nu}.$$
From this description, it is immediate that the $\kk$-linear structures, symmetric group actions and differentials are respected. We get a commutative diagram
$$\xymatrix{\Coop[A]\ar[r]^-{\thF_1} \ar[d]^-\Delta & \Coop[B] \ar[d]^-\Delta \ar@{=}[dr] \\ \Coop[\Coop[A]] \ar[r]_-{\Coop[\thF]} & \Coop[\Coop[B]] \ar[r]_-{\Coop[\epsilon]} & \Coop[B]}$$
where the square commutes by definition of thick maps of $\Coop$-coalgebras and the triangle commutes because of the counit axiom for cooperads. This shows that $\thF_1$ is determined by the collection of maps $\epsilon^{\tensor m}\thF_m$, and hence also by the maps $\thF^\nu$ for $\nu\in\Coop^{\tensor m}(n)$. The above diagram shows moreover that
\begin{align*}
\thF_1 \iota_\nu(a)  & = \Coop[\mathbf{\epsilon}\thF]\Delta (\nu\tensor a) = \sum_q (1\tensor \epsilon^{\tensor \alpha_q} \thF_{\alpha_q}) (\nu_q'\tensor \nu_q''\tensor a) \\
& = \sum_q (-1)^{|\thF||\nu_q'|} \nu_q'\tensor \epsilon^{\tensor \alpha_q} \thF_{\alpha_q} (\nu_q''\tensor a),
\end{align*}
for any $\nu\in\Coop(n)$ and any $a\in A^{\tensor n}$, where $\Delta(\nu) = \sum_q \nu_q' \circ \nu_q''$ for $\nu_q'\in\Coop(r_q)$ and $\nu_q''\in\Coop^{\tensor r_q}(n)$. In other words, $\thF_1 \iota_\nu = \sum_q (-1)^{|\nu_q'||\thF|} \iota_{\nu_q'} \thF^{\nu_q''}$. Thus,
$$(\thF \thG)^\nu = \epsilon\thF_1 \thG_1 \iota_\nu = \sum_q (-1)^{|\nu_q'||\thG|}\epsilon\thF \iota_{\nu_q'} \thG^{\nu_q''} = \sum_q (-1)^{|\thG||\nu_q'|} \thF^{\nu_q'} \thG^{\nu_q''}.$$
The remaining properties are straightforward to check.
\end{proof}

\section{Transferring $\Omega(\Coop)$-algebra structures} \label{section:transfer}
Let $\Coop$ be a connected cooperad and let $\Omega(\Coop)$ denote the cobar construction \cite{Fresse-cobar,GJ}.
The following two propositions are well-known, see \cite{Fresse-cobar,GJ}. They can also be proved easily using Proposition \ref{prop:bridge}.

\begin{proposition} \label{prop:infty-algebra}
An $\Omega(\Coop)$-algebra structure on a chain complex $A$ is described by any of the following
\begin{itemize}
\item A weight decreasing coderivation $t\colon \Coop[A]\rightarrow \Coop[A]$ of degree $-1$ satisfying $\partial(t) + t^2 =0$.

\item Maps $t^\nu\colon A^{\tensor n}\rightarrow A$ of degree $|\nu|-1$ for all $\nu\in\Coop(n)$, $n\geq 1$, satisfying
\begin{equation*}
\partial(t^\nu) + t^{d(\nu)} + \sum_{i=0}^u (-1)^{|\nu_i'|} (t^{\nu_i'}\circ_{e_i} t^{\nu_i''})\tau_i = 0,
\end{equation*}
and $t^{a\nu + b\nu'} = at^\nu + bt^{\nu'}$, $t^{\nu\sigma} = t^\nu \sigma$ for $\nu,\nu'\in\Coop(n)$, $a,b\in\kk$ and $\sigma\in\Sigma_n$.
\end{itemize}
\end{proposition}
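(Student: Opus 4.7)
My plan is to apply Proposition \ref{prop:bridge} directly: it converts a symmetric thick map of $\Coop$-coalgebras into a family of maps indexed by cooperad elements, and records how composition, differentials, and linear/equivariance structure all translate. The entire statement follows from tracking how ``weight-decreasing coderivation of degree $-1$ satisfying $\partial(t)+t^2=\thzr$'' unwinds in this language.

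For the passage from $t$ to its family, I set $t^\nu := \epsilon\circ t_1\circ \iota_\nu\colon A^{\tensor n}\rightarrow A$ for $\nu\in \Coop(n)$; the correct degree $|\nu|-1$, the equivariance $t^{\nu\sigma}=t^\nu\sigma$, and the linearity in $\nu$ are immediate from the corresponding items of Proposition \ref{prop:bridge}. By item (3) of that proposition, a weight-decreasing coderivation is fully reconstructed from these data via $t^{\nu_1\tensor\cdots\tensor\nu_m}=1^{\tensor i}\tensor t^\nu\tensor 1^{\tensor j}$ when a unique factor has arity $>1$ and vanishes otherwise; combined with item (1), $t$ is determined by $\{t^\nu\}_{\nu\in\Coop(n)}$. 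Conversely, any family $\{t^\nu\}$ with the stated symmetry and linearity properties can be assembled into a well-defined $t_1$ on each weight-$n$ summand by the same formula, then extended to all $t_m$ through the symmetric monoidal structure of the Schur functor $\Coop[-]$. This extension is visibly a coderivation and visibly weight-decreasing, so the two passages are mutually inverse.

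To match the equations, I unpack $(\partial(t)+t^2)^\nu$ using Proposition \ref{prop:bridge}. With $|t|=-1$, the differential identity gives $\partial(t)^\nu = \partial(t^\nu)+t^{d(\nu)}$, and the composition identity yields
\[
(t^2)^\nu = \sum_q (-1)^{|\nu_q'|}\, t^{\nu_q'}\, t^{\nu_q''}, \qquad \Delta(\nu)=\sum_q \nu_q'\circ \nu_q''.
\]
The weight-decreasing hypothesis kills every ``extreme'' contribution: $\nu\circ 1^{\tensor n}$ vanishes because $t^{1^{\tensor n}}=0$ for $n\ge 2$ by item (3), and $1\circ \nu$ vanishes because weight-decreasing forces $t^1=0$ on $\Coop(1)\tensor A$. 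Among the remaining terms, item (3) forces every tensor factor of $\nu_q''$ except one to be a unit, which selects exactly the quadratic part $\Delta_{(1)}(\nu)=\sum_i (\nu_i'\circ_{e_i}\nu_i'')\tau_i$ and converts the surviving contributions into $\sum_i (-1)^{|\nu_i'|}(t^{\nu_i'}\circ_{e_i}t^{\nu_i''})\tau_i$. Summing gives precisely the relation of the second bullet, and the determination statement from item (1) then implies that $\partial(t)+t^2=\thzr$ is equivalent to the family relation holding for all $\nu$.

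The main obstacle is merely confirming that the ``extreme'' and higher non-quadratic terms of the coproduct drop out exactly under the weight-decreasing hypothesis; the weight-decreasing condition is tailor-made for this, and once it is observed, the equivalence reduces to a routine sign-tracking exercise driven by Proposition \ref{prop:bridge}.
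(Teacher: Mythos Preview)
Your proposal is correct and follows precisely the route the paper indicates: the paper does not give a detailed proof of this proposition but states that it is well-known and ``can also be proved easily using Proposition~\ref{prop:bridge},'' which is exactly what you do. Your unpacking of $\partial(t)^\nu$ and $(t^2)^\nu$ via the differential and composition identities of Proposition~\ref{prop:bridge}, together with the use of item~(3) to kill the non-quadratic terms, is the intended argument.
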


\begin{proposition}
Let $t$ and $t'$ be $\Omega(\Coop)$-algebra structures on $A$ and $B$, respectively. A lax morphism of $\Omega(\Coop)$-algebras $(A,t)\rightarrow (B,t')$ is described by any of the following
\begin{itemize}
\item A morphism of $\Coop$-coalgebras $f\colon \Coop[A]\rightarrow \Coop[B]$ satisfying
$$(d_{\Coop[B]} + t')f = f(d_{\Coop[A]} + t).$$
\item Maps $f^\nu\colon A^{\tensor n}\rightarrow B$, of degree $|\nu|$ for all $\nu\in\Coop(n)$, $n\geq 1$, satisfying (with $f := f^1$)
\begin{align*}
\partial(f^\nu) + (t')^\nu f^{\tensor n} & + \sum_{q=1}^p (t')^{\nu^q}(f^{\nu_1^q}\tensor \ldots \tensor f^{\nu_{r_q}^q}) \sigma_q \\
& = f^{d(\nu)} + f t^\nu + \sum_{i=1}^u (-1)^{|\nu_i'|}(f^{\nu_i'}\circ_{e_i} t^{\nu_i''})\tau_i,
\end{align*}
and $f^{a\nu + b\nu'} = af^\nu + bf^{\nu'}$, $f^{\nu\sigma} = f^\nu \sigma$ for $\nu,\nu'\in\Coop(n)$, $a,b\in\kk$ and $\sigma\in\Sigma_n$.
\end{itemize}
\end{proposition}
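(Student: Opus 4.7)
The plan is to view $f\colon\Coop[A]\to\Coop[B]$ as a symmetric thick map of $\Coop$-coalgebras (in the sense of Section \ref{section:cooperads}) and to apply Proposition \ref{prop:bridge}. An ordinary morphism $f$ of $\Coop$-coalgebras corresponds to the thick map with $n$-th component $f^{\tensor n}$, so by item (2) of Proposition \ref{prop:bridge} it is completely determined by the collection $f^\nu\colon A^{\tensor n}\to B$ indexed by $\nu\in\Coop(n)$, and conversely any $\kk$-linear, $\Sigma_n$-equivariant family $\{f^\nu\}$ assembles into such a morphism by extending via $f^{\nu_1\tensor\cdots\tensor\nu_m}=f^{\nu_1}\tensor\cdots\tensor f^{\nu_m}$.

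The dg condition $(d_{\Coop[B]}+t')f=f(d_{\Coop[A]}+t)$ is equivalent to $\partial(f)=ft-t'f$, and I would apply the bridge $(-)^\nu$ for $\nu\in\Coop(n)$: its left-hand side becomes $\partial(f^\nu)-f^{d\nu}$ by the differential rule, and each composite on the right is expanded via the composition formula $(\thF\thG)^\nu=\sum_q(-1)^{|\thG||\nu_q'|}\thF^{\nu_q'}\thG^{\nu_q''}$ together with the decomposition $\Delta(\nu)=\nu\circ 1^{\tensor n}+1\circ\nu+\sum_q\nu^q\circ(\nu_1^q\tensor\cdots\tensor\nu_{r_q}^q)\sigma_q$ from Section \ref{section:operads}. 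The simplifications that make this tractable are all packaged in Proposition \ref{prop:bridge}: because $t$ and $t'$ are weight-decreasing coderivations (item (3)), only summands where $\nu_q''$ has exactly one arity-$>1$ factor survive, and $t^{1^{\tensor i}\tensor\nu\tensor 1^{\tensor j}}=1^{\tensor i}\tensor t^\nu\tensor 1^{\tensor j}$; because $f$ is a morphism (item (2)), $f^{(\nu_1^q\tensor\cdots\tensor\nu_{r_q}^q)\sigma_q}$ factors as $(f^{\nu_1^q}\tensor\cdots\tensor f^{\nu_{r_q}^q})\sigma_q$.

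Working through the expansion, the $\nu\circ 1^{\tensor n}$ piece of $(ft)^\nu$ vanishes (all arities are $1$), the $1\circ\nu$ piece yields $ft^\nu$, and the remaining surviving terms are exactly those from the quadratic part $\Delta_{(1)}(\nu)=\sum_i(\nu_i'\circ_{e_i}\nu_i'')\tau_i$, giving the sum $\sum_i(-1)^{|\nu_i'|}(f^{\nu_i'}\circ_{e_i}t^{\nu_i''})\tau_i$. Similarly $(t'f)^\nu=(t')^\nu f^{\tensor n}+\sum_q(t')^{\nu^q}(f^{\nu_1^q}\tensor\cdots\tensor f^{\nu_{r_q}^q})\sigma_q$, where the $\nu\circ 1^{\tensor n}$ piece contributes the first term and the $1\circ\nu$ piece drops out because $(t')^1=0$ (weight decrease). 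Substituting into $\partial(f)^\nu=(ft)^\nu-(t'f)^\nu$ and rearranging produces the displayed relation, and since every step is reversible the equivalence holds in both directions. The main obstacle is keeping the Koszul signs straight: one has to track the sign $(-1)^{|\thG||\nu_q'|}$ from the composition formula (with $|t|=|t'|=-1$), the sign in the differential rule (vacuous here since $|f|=0$), and the sign collected when sliding $t^{\nu_i''}$ past identities in the partial composition, and verify that these collapse to the single factor $(-1)^{|\nu_i'|}$ appearing in the statement.
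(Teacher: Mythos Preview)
Your proposal is correct and follows exactly the approach the paper indicates: the paper does not give a detailed proof of this proposition but simply remarks that it is well known and ``can also be proved easily using Proposition \ref{prop:bridge}''. Your argument is a faithful and accurate elaboration of that hint, applying the composition formula, the differential rule, and items (2) and (3) of Proposition \ref{prop:bridge} to expand $\partial(f)^\nu=(ft)^\nu-(t'f)^\nu$ into the displayed relation; the sign bookkeeping you flag indeed collapses to the single factor $(-1)^{|\nu_i'|}$ as you describe.
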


\begin{proof}[Proof of Theorem \ref{thm:implicit} and Theorem \ref{thm:transfer}]
By the $\Coop$-coalgebra Tensor Trick, Theorem \ref{thm:cooperad tensor trick}, a suitable choice of pseudo-derivation $\thh$ gives rise to a contraction of $\Coop$-coalgebras
$$\bigSDR{\Coop[A]}{\Coop[B]}{\Coop_\bullet[\thf]}{\Coop_\bullet[\thg]}{\Coop_\bullet[\thh]}.$$
For ease of notation, let $\thF =\Coop_\bullet[\thf]$, $\thG = \Coop_\bullet[\thg]$, $\thH = \Coop_\bullet[\thh]$. The $\Omega(\Coop)$-algebra structure on $A$ is encoded in a weight decreasing coderivation perturbation $t\colon \Coop[A]\rightarrow \Coop[A]$. That $t$ is a coderivation perturbation implies that the thick map $\tht$ with
$$\tht_n = \sum_{p+1+q=n} 1^{\tensor p}\tensor t\tensor 1^{\tensor q}$$
is a thick map of $\Coop$-coalgebras that satisfies $\partial(\tht)+\tht^2 = \thzr$. Now, we can apply the $\Coop$-coalgebra Perturbation Lemma, Theorem \ref{thm:TCPL}, to obtain a new contraction of $\Coop$-coalgebras
$$\bigSDR{(\Coop[A],d_{\Coop[A]}+t)}{(\Coop[B],d_{\Coop[B]}+t')}{\thFp}{\thGp}{\thHp}$$
determined by the recursive formulas
\begin{align*}
\thFp & = \thF + \thFp\tht\thH, & \thGp & = \thG + \thH\tht\thGp, \\
\thHp & = \thH + \thHp\tht\thH, & \thtp & = \thF\tht\thGp.
\end{align*}
This proves Theorem \ref{thm:implicit}.

To prove Theorem \ref{thm:transfer}, we need to expand the above formulas. Let us write $f^\nu = (\thFp)^\nu$, $g^\nu = (\thGp)^\nu$, $t^\nu = \tht^\nu$, $(t')^\nu = (\thtp)^\nu$, $h^\nu = (\thHp)^\nu$. Observe that $f^1 = f$, $g^1 = g$, $h^1 = h$ and $t^1 = 0$ since $t$ decreases weight. By Proposition \ref{prop:bridge}, we have that for any $\nu\in \Coop(n)$ where $n>1$
\begin{align*}
(\thGp)^\nu & = (\thG + \thH\tht\thGp)^\nu \\
& = \thG^\nu + h (\tht\thGp)^\nu \\
& = \sum_{q=0}^{p+1} h t^{\nu^q} (\thGp)^{(\nu_1^q\tensor \ldots\tensor \nu_{r_q}^q) \sigma_q} \\
& = h t^\nu g^{\tensor n} + \sum_{q=1}^p h t^{\nu^q} (g^{\nu_1^q}\tensor \ldots \tensor g^{\nu_{r_q}^q})\sigma_q.
\end{align*}
The recursive formula for $(t')^\nu$ is derived in the same way. Similarly,
\begin{align*}
(\thFp)^\nu & = (\thF + \thF' \tht \thH)^\nu \\
& = \thF^\nu + (-1)^{|\nu|}(\thF'\tht)^\nu \thh_n \\
& = (-1)^{|\nu|} \sum_{q=0}^{p+1} (-1)^{|\nu^q|} f^{\nu^q} t^{\nu_1^q\tensor \ldots \tensor \nu_{r_q}^q} \thh_n \\
& = (-1)^{|\nu|} f t^\nu \HH{n} + \sum_{i=1}^u (-1)^{|\nu_i''|} (f^{\nu_i'}\circ_{e_i} t^{\nu_i''})\tau_i \HH{n},
\end{align*}
where we have used Proposition \ref{prop:bridge}\eqref{item:coderivation} in the last step. The recursive formula for $h^\nu$ is derived in the same way.
\end{proof}

\section{Example: $A_\infty$-algebras} \label{section:tree}
Let us illustrate how the formulas of Theorem \ref{thm:transfer} work in the case of $A_\infty$-algebras. $A_\infty$-algebras are exactly $\Omega(\Asa)$-algebras and $A_\infty$-morphisms are exactly lax morphisms of $\Omega(\Asa)$-algebras, where $\Asa = (\Lambda \As)^\vee$ is the Koszul dual cooperad of the associative operad $\As$. For $n\geq 1$, $\Asa(n)$ is the free right $\kk\Symgp{n}$-module on one generator $\mu_n$ of degree $n-1$. Write $\mu_1 = 1$. The differential is zero and the coproduct is given by
$$\Delta(\mu_n) = \sum_{i_1+\ldots+i_r = n} (-1)^\epsilon \mu_r \circ (\mu_{i_1}\tensor \ldots \tensor \mu_{i_r}),$$
where the sign is given by
$$\epsilon = \sum_{j<k} i_j(i_k-1).$$
The quadratic part of the coproduct is thus given by
$$\Delta_{(1)}(\mu_n) = \sum_{r+s+t=n} (-1)^{r(s-1)}\mu_u \circ (1^{\tensor r}\tensor \mu_s \tensor 1^{\tensor t}),$$
where $u = r+1+t$. Thus, in view of Proposition \ref{prop:infty-algebra} an $\Omega(\Asa)$-algebra structure $t$ on a chain complex $A$ is the same thing as a sequence of maps $m_n := t^{\mu_n}\colon A^{\tensor n}\rightarrow A$ of degree $n-2$ such that for all $n\geq 2$
$$\delta(m_n) = \sum_{r+s+t=n} (-1)^{rs + r + u}m_u\circ (1^{\tensor r}\tensor m_s\tensor 1^{\tensor t}).$$
Noting that $r+u$ has the same parity as $t+1$ in the above, this recovers the usual definition of an $A_\infty$-algebra with the same sign convention as in \cite[D\'efinition 1.2.1.1]{LH}. In the transfer theorem, writing $f_n := f^{\mu_n}$ etc., we see that
\begin{align*}
m_n' & = \sum_{\substack{i_1+\ldots + i_r = n \\ r>1}} (-1)^\epsilon f m_r (g_{i_1}\tensor \ldots \tensor g_{i_r}), \\
g_n & = \sum_{\substack{i_1+\ldots + i_r = n \\ r>1}} (-1)^\epsilon h m_r (g_{i_1}\tensor \ldots \tensor g_{i_r}), \\
f_n & = \sum_{\substack{p+u+q = n \\ r = p+1+q}} (-1)^{(p+1)(u+1)} f_r\big( 1^{\tensor p} \tensor m_u \tensor 1^{\tensor q} \big) \HH{n}, \\
h_n & = \sum_{\substack{p+u+q = n \\ r = p+1+q}} (-1)^{(p+1)(u+1)} h_r\big( 1^{\tensor p} \tensor m_u \tensor 1^{\tensor q} \big) \HH{n}.
\end{align*}
One choice of pseudo-derivation $\HH{n}$ is given by
$$\HH{n} = \sum_{i+1+j = n} 1^{\tensor i}\tensor h\tensor (gf)^{\tensor j},$$
but other choices are possible. The sign $(-1)^\epsilon$ in the formulas for $m_n'$ and $g_n$ is the same as in the description of the coproduct above.

If one unwinds these recursive formulas then one obtains `tree formulas'. To make this idea precise, let us see how this works for $g_n$ for low values of $n$. We have $g_2 = hm_2(g_1\tensor g_1) = hm_2(g\tensor g)$, and this may be represented pictorially as follows:
$$g_2 = \ttree$$
Next,
\begin{align*}
g_3 & = hm_3(g_1\tensor g_1\tensor g_1) + hm_2(g_2\tensor g_1) -hm_2(g_1\tensor g_2) \\
& = hm_3(g\tensor g\tensor g) + hm_2(hm_2(g\tensor g)\tensor g) - hm_2(g\tensor hm_2(g\tensor g)).
\end{align*}
This may be represented by the picture
$$g_3 = \rtree - \ltree + \mtree$$
In general, we have that $g_n$ is the alternating sum over all trees $T$ with $n$ leaves, where the leaves are decorated by $g$, the vertices by $m_r$, where $r$ is the number of incoming edges of the vertex at hand, and the root by $h$. The sign attached to a tree $T$ is determined by the parity of the number of pairs $(\ell,v)$ where $\ell$ is a leaf and $v$ is a vertex with an even number of incoming edges such that $\ell$ is to the left of $v$ in $T$. The formula for $m_n'$ is the same except that the root is decorated by $f$ instead of $h$. These are exactly the formulas written down by Kontsevich-Soibelman \cite[\S 6.4]{KS}, based on \cite{Merkulov}.

\end{document}